\documentclass[12pt]{article}
\usepackage{./macros}

\title{Cycle type factorizations in \(\GL_n \F_q\)}
\author{Graham Gordon}

\begin{document}
\maketitle

\abstract{
Recent work by Huang, Lewis, Morales, Reiner, and Stanton suggests that the regular elliptic elements of \(\GL_n \F_q\) are somehow analogous to the \(n\)-cycles of the symmetric group.
In 1981, Stanley enumerated the factorizations of permutations into products of \(n\)-cycles.
We study the analogous problem in \(\GL_n \F_q\) of enumerating factorizations into products of regular elliptic elements.
More precisely, we define a notion of cycle type for \(\GL_n \F_q\) and seek to enumerate the tuples of a fixed number of regular elliptic elements whose product has a given cycle type.
In some special cases, we provide explicit formulas, using a standard character-theoretic technique due to Frobenius by introducing simplified formulas for the necessary character values.
We also address, for large \(q\), the problem of computing the probability that the product of a random tuple of regular elliptic elements has a given cycle type.
We conclude with some results about the polynomiality of our enumerative formulas and some open problems.
}

\section{Introduction}

Factorization enumeration has a long history filled with interesting combinatorics and topology \cite{denes, elsv, cacti, jackson_top, vakil}.
For example, in \cite{ncycle}, Stanley enumerates the ordered factorizations of an arbitrary permutation in \(\FS_n\) into a product of \(n\)-cycles.
We are interested in finding an analogue of Stanley's result for the finite general linear group \(\GL_n \F_q\).

We assume some basic knowledge of the representation theory of \(\FS_n\).
For each partition \(\mu \vdash n\),
let \(\CC_\mu \subset \FS_n\) denote the conjugacy class consisting of permutations with cycle type \(\mu\).
Let \(m_i (\mu) \) denote the multiplicity of \(i\) in \(\mu\).
For \(\lambda \vdash n\), let \(\chi^\lambda_\mu\) denote the irreducible character \(\chi^\lambda\) of \(\FS_n\) corresponding to \(\lambda\) evaluated on an element of \(\CC_\mu\).
Let \(\N = \{1,2,3,\ldots\}\) denote the positive integers.
For any \(\mu \vdash n\) and \(k \in \N\), define
\begin{equation}
g_{k,\mu} = \# \{(t_1,\ldots,t_k) \in \CC_{(n)}^k : t_1\cdots t_k \in \CC_\mu\}.
\end{equation}
The quantity \(g_{k,\mu}\) is \(\#\CC_\mu\) times as large as the aforementioned quantity Stanley computes.
\begin{theorem}[Stanley {\cite[Thm.~3.1]{ncycle}}]
\label{inspiration}
For all \(n,k \in \N\) and \(\mu \vdash n\), the number of ordered \(k\)-tuples of \(n\)-cycles whose product equals an arbitrary fixed permutation of cycle type \(\mu\) is
\begin{equation}
\label{chi_phrasing_of_insp}
\frac{g_{k,\mu}}{\# \CC_\mu}
=
\frac{(n-1)!^{k-1}}{n}
\sum_{r=0}^{n-1} \frac{(-1)^{rk} \chi^{(n-r,1^r)}_\mu}{\binom{n-1}{r}^{k-1}},
\end{equation}
and, more explicitly,
\begin{equation}
\label{explicit_phrasing_of_insp}
\frac{g_{k,\mu}}{\# \CC_\mu}
=
\frac{(n-1)!^{k-1}}{n}
\sum_{r=0}^{n-1}
\frac{(-1)^{rk}}{\binom{n-1}{r}^{k-1}}
\sum_{\nu \vdash r}
(-1)^{\sum_{a \geq 1} m_{2a}(\nu)}
\binom{m_1(\mu)-1}{m_1(\nu)}
\prod_{j = 2}^{r}
\binom{m_j(\mu)}{m_j(\nu)}.
\end{equation}
\end{theorem}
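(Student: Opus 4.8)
The plan is to prove Theorem~\ref{inspiration} by the character-theoretic method of Frobenius: write \(g_{k,\mu}/\#\CC_\mu\) as a weighted sum over \(\mathrm{Irr}(\FS_n)\) of ratios of character values, notice that only hook shapes survive, and then substitute a closed form for the hook character values to pass from \eqref{chi_phrasing_of_insp} to \eqref{explicit_phrasing_of_insp}.

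First I would invoke Frobenius's formula: for conjugacy classes \(C_1,\dots,C_k\) of a finite group \(G\), with representatives \(c_i\), and a fixed \(g\in G\), the number of tuples \((t_1,\dots,t_k)\in C_1\times\cdots\times C_k\) with \(t_1\cdots t_k=g\) is
\begin{equation*}
\frac{|C_1|\cdots|C_k|}{|G|}\sum_{\chi\in\mathrm{Irr}(G)}\frac{\chi(c_1)\cdots\chi(c_k)\,\overline{\chi(g)}}{\chi(1)^{k-1}} .
\end{equation*}
Specializing to \(G=\FS_n\), every \(C_i=\CC_{(n)}\), and \(g\in\CC_\mu\), and using that \(\FS_n\) has rational (hence real) characters and \(\#\CC_{(n)}=(n-1)!\), this becomes
\begin{equation*}
\frac{g_{k,\mu}}{\#\CC_\mu}=\frac{(n-1)!^{\,k}}{n!}\sum_{\lambda\vdash n}\frac{\bigl(\chi^\lambda_{(n)}\bigr)^{k}\,\chi^\lambda_\mu}{\bigl(\chi^\lambda(1)\bigr)^{k-1}} .
\end{equation*}
Next I would apply the Murnaghan--Nakayama rule to the \(n\)-cycle: a partition of \(n\) admits a (necessarily unique) border strip of size \(n\) only when it is a hook \((n-r,1^r)\), and that strip has leg length \(r\), so \(\chi^{(n-r,1^r)}_{(n)}=(-1)^{r}\) while \(\chi^\lambda_{(n)}=0\) otherwise. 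With \(\chi^{(n-r,1^r)}(1)=\binom{n-1}{r}\) and \((n-1)!^{k}/n!=(n-1)!^{k-1}/n\), this collapses the display to \eqref{chi_phrasing_of_insp}.

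For \eqref{explicit_phrasing_of_insp} I still need the hook character values on \(\CC_\mu\). I would use that \(\chi^{(n-r,1^r)}\) is the character of \(\bigwedge^{r}V\), where \(V\) is the \((n-1)\)-dimensional standard representation; then for \(w\in\CC_\mu\) with permutation matrix \(P_w\), the splitting \(\mathbb{C}^{n}=V\oplus\mathrm{triv}\) together with \(\sum_{r}\operatorname{tr}\!\bigl(\bigwedge^{r}(w|_V)\bigr)t^{r}=\det(\mathrm{id}+t\,w|_V)\) gives
\begin{equation*}
\sum_{r=0}^{n-1}\chi^{(n-r,1^r)}_\mu\,t^{r}=\frac{\det(\mathrm{id}+t\,P_w)}{1+t}=\frac{1}{1+t}\prod_{i\ge 1}\bigl(1-(-t)^{i}\bigr)^{m_i(\mu)},
\end{equation*}
the last step because the \(i\times i\) permutation matrix of an \(i\)-cycle has characteristic polynomial \(x^{i}-1\), so each cyclic block of \(P_w\) contributes a factor \(1-(-t)^{i}\). (One could instead derive this generating identity directly by iterating Murnaghan--Nakayama on the single rim of a hook; either way, checking \(\mu=(n)\) and \(\mu=(1^{n})\) recovers \((-1)^{r}\) and \(\binom{n-1}{r}\).) To finish, absorb \((1+t)^{-1}\) into the \(i=1\) factor, so that \((1+t)^{m_1(\mu)}\) becomes \((1+t)^{m_1(\mu)-1}\) (reading \(\binom{-1}{m}=(-1)^{m}\) when \(m_1(\mu)=0\)), expand every binomial power, and observe that the monomials of degree \(r\) are indexed precisely by the choices of exponents \(m_i(\nu)\) with \(\sum_i i\,m_i(\nu)=r\), i.e.\ by partitions \(\nu\vdash r\); such a \(\nu\) contributes \(\binom{m_1(\mu)-1}{m_1(\nu)}\prod_{j\ge 2}\binom{m_j(\mu)}{m_j(\nu)}\) with a sign \((-1)^{m_i(\nu)}\) from each even \(i\) (odd \(i\) contributes no sign, since \((-(-t)^{i})^{m_i(\nu)}=t^{i\,m_i(\nu)}\) there). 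Packaging the signs as \((-1)^{\sum_{a\ge1}m_{2a}(\nu)}\) yields exactly \eqref{explicit_phrasing_of_insp}.

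The main obstacle is the hook-character generating identity and the sign bookkeeping in its coefficient extraction; the Frobenius set-up and the two Murnaghan--Nakayama inputs (vanishing off hooks, value \((-1)^{r}\) on a hook) are routine by comparison. Inside the coefficient extraction, the only delicate points are verifying that odd parts of \(\nu\) carry no sign while each even part carries \(-1\), and that the index shift \(m_1(\mu)\mapsto m_1(\mu)-1\) uniformly encodes the \((1+t)^{-1}\), including the degenerate case \(m_1(\mu)=0\).
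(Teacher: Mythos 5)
Your proof is correct and follows the same character-theoretic approach the paper attributes to Stanley: the Frobenius formula, collapse to hooks via the Murnaghan--Nakayama rule (giving \eqref{chi_phrasing_of_insp}), and substitution of an explicit hook-character formula (giving \eqref{explicit_phrasing_of_insp}). Your generating-function derivation of the hook character values via $\det(I+tP_w)/(1+t)$ and exterior powers of the standard representation is the standard route to Stanley's Lemma 2.2, and the sign bookkeeping (odd parts signless, even parts contributing $(-1)^{m_{2a}(\nu)}$, the shift $m_1(\mu)\mapsto m_1(\mu)-1$ absorbing $(1+t)^{-1}$) checks out.
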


Theorem~\ref{inspiration} is proved using a character-theoretic technique due to Frobenius which we describe in Section~\ref{approach}.
The simplicity of \eqref{chi_phrasing_of_insp} comes from the fact that \(\chi^\lambda_{(n)} = 0\) unless \(\lambda\) is a \myemph{hook}, i.e., \(\lambda = (n-r,1^r)\) for some \(r \in \{0,\ldots,n-1\}\).
The more explicit phrasing \eqref{explicit_phrasing_of_insp} is obtained by evaluating the hook character values explicitly \cite[Lem.~2.2]{ncycle}.
For the sake of brevity, we will express formulas using the symbols \(\chi^{(n-r,1^r)}_\mu\) with the understanding that there exists an explicit formula for computing such character values.

For further history, see the work of Bertram-Wei \cite{bertram_wei}, Boccara \cite{boccara}, and Walkup \cite{walkup}.
Walkup developed a recursion for the number of ways to factor a given permutation in \(\FS_n\) into the product of two \(n\)-cycles.
Boccara expressed the number of factorizations of a given permutation in \(\FS_n\) into the product of an \(n\)-cycle and an \(m\)-cycle in terms of the definite integral of a certain polynomial, for \(2 \leq m \leq n\).
Bertram-Wei developed some recursive and some explicit formulas for the number of ways to factor a given permutation in \(\FS_n\) into the product of an \(n\)-cycle and an \(m\)-cycle for various \(m \leq n\).
Stanley's result of course only applies to factoring permutations into products of \(n\)-cycles, but it applies to cases with more than two factors, unlike the other results mentioned.

Let \(\GL_n \F_q\) denote the group of \(n \times n\) invertible matrices with entries in the finite field \(\F_q\) with \(q\) elements.
Consider the \myemph{regular elliptic} elements of \(\GL_n \F_q\), which are those matrices whose characteristic polynomial is irreducible over \(\F_q\).
Recent work by Huang, Lewis, Morales, Reiner, and Stanton \cite{HLR, LM, refl_fact} suggests that the regular elliptic elements are analogous to the \(n\)-cycles in \(\FS_n\) from the perspective of enumerating factorizations.

Given a matrix \(g \in \GL_n \F_q\), we define the \myemph{cycle type} of \(g\) to be \(\mu = (\mu_1,\ldots,\mu_\ell) \vdash n\) if the degrees of the irreducible factors of the characteristic polynomial of \(g\) are \(\mu_1,\ldots,\mu_\ell\) in weakly decreasing order,
and we write \(\type (g) = \mu\).
For each \(\mu \vdash n\), let \(\CT_\mu (q) \subset \GL_n \F_q\) denote the subset of matrices of cycle type \(\mu\).
In particular, \(\CT_{(n)} (q)\) is the set of regular elliptic elements.
Note that \(\{\CT_\mu (q) : \mu \vdash n\}\) is a partition of \(\GL_n \F_q\).
This definition is built on the statement made by Stong in the conclusion of \cite{stong} that a degree-\(m\) divisor of the characteristic polynomial of a matrix in \(\GL_n \F_q\) is the analog of a cycle of length \(m\) in a permutation in \(\FS_n\).
One can obtain the normalized generating function for our definition of cycle type from Stong's \cite{stong} generalization
\(\tilde{Z}_n (q; x)\)
of Kung's \cite{kung} cycle index
\(Z(\GL_n \F_q; x)\)
by making the variable substitution
\(x_{p,b} \mapsto x_{m}^{j}\)
if \(p\) has degree \(m\) and \(b \vdash j\).
The substitution \(x_{p,b} \mapsto x_m^j\) is mentioned explicitly by Fulman in Section 5 of \cite{fulman}.

Toward the end of enumerating factorizations into products of regular elliptic elements, for any \(\mu \vdash n, k \in \N\), and prime power \(q\), we define
\begin{equation}
g_{k,\mu} (q)
=
\# \{ (t_1,\ldots,t_k) \in \CT_{(n)} (q)^k : t_1 \cdots t_k \in \CT_\mu (q) \}.
\end{equation}
In this paper, we consider the quantity \(g_{k,\mu}(q)\) to be a \(\GL_n \F_q\)-analogue of \(g_{k,\mu}\),
and we seek efficient ways to compute \(g_{k,\mu}(q)\).
We also consider the \myemph{regular semisimple} elements of \(\GL_n \F_q\), which are those matrices whose characteristic polynomial has no repeated irreducible factors.
Let \(\CT_\mu^\Box (q)\) denote the set of regular semisimple elements with cycle type \(\mu\).
We explain this choice of notation in Section~\ref{cycle_rs_re}.
Note that \(\{\CT_\mu^\Box (q) : \mu \vdash n\}\) is not a partition of \(\GL_n \F_q\) in general, as not all elements of \(\GL_n \F_q\) are regular semisimple.
However, as the following result implies, for large \(q\), an arbitrarily large proportion of \(\GL_n \F_q\) is regular semisimple.
Let \(z_\mu\) denote the cardinality of the centralizer of an element of \(\CC_\mu\).

\begin{corollary}[to~Cor.~\ref{rss_cc_and_cycle_type_sizes}]
\label{cycle_type_ratio_limit}
For all \(n \in \N\) and \(\mu \vdash n\),
\label{dense}
\[
\lim_{q \to \infty} \frac{\# \CT_\mu^\Box (q)}{\# \GL_n \F_q} = \lim_{q \to \infty} \frac{\# \CT_\mu (q)}{\# \GL_n \F_q} = \frac{1}{z_\mu}.
\]
\end{corollary}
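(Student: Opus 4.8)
The plan is to deduce both limits from the exact counts of \(\#\CT_\mu^\Box(q)\) and \(\#\CT_\mu(q)\) furnished by Corollary~\ref{rss_cc_and_cycle_type_sizes}, divide by \(\#\GL_n\F_q = q^{\binom n2}\prod_{i=1}^n(q^i-1)\) — which is a monic polynomial in \(q\) of degree \(n^2\) — and then extract the leading behavior as \(q\to\infty\). Throughout write \(m_i = m_i(\mu)\) and recall that \(z_\mu = \prod_i i^{m_i}m_i!\).

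First I would treat the regular semisimple count. A regular semisimple conjugacy class of cycle type \(\mu\) is specified by choosing, for each \(i\), a set of \(m_i\) distinct monic irreducible polynomials of degree \(i\) over \(\F_q\); writing \(N_i(q)\) for the number of monic irreducibles of degree \(i\) other than \(x\) (note \(N_i(q) = q^i/i + O(q^{i/2})\) uniformly, and \(x\) must be excluded only because our matrices are invertible, which is immaterial in the limit), there are \(\binom{N_i(q)}{m_i}\) such choices. The centralizer of any element of such a class is isomorphic to \(\prod_{j=1}^{\ell}\F_{q^{\mu_j}}^\times\), of order \(\prod_i(q^i-1)^{m_i}\), so Corollary~\ref{rss_cc_and_cycle_type_sizes} gives
\[
\frac{\#\CT_\mu^\Box(q)}{\#\GL_n\F_q}
= \prod_i \frac{1}{(q^i-1)^{m_i}}\binom{N_i(q)}{m_i}.
\]
Since \(N_i(q)/q^i \to 1/i\) and \((q^i-1)/q^i \to 1\), each factor tends to \(\dfrac{1}{m_i!\,i^{m_i}}\), and the product tends to \(1/z_\mu\).

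For the second equality I would use the inclusion \(\CT_\mu^\Box(q)\subseteq\CT_\mu(q)\) together with an estimate on the difference, which counts the elements of cycle type \(\mu\) whose characteristic polynomial has a repeated irreducible factor. For such an element the primary decomposition assigns to some irreducible polynomial of degree \(i\) a module type \(\lambda\vdash a\) with \(a\ge 2\); relative to the regular semisimple case this strictly decreases the number of distinct irreducibles that must be chosen (by at least one, costing a factor of order \(q^i\) or more in the number of classes), while the centralizer of that primary block has order of \(q\)-degree \(i\sum_j(\lambda'_j)^2 \ge i\,|\lambda|\) (with \(\lambda'\) the conjugate partition), so a class size can only have \(q\)-degree at most \(n^2-n\). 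Hence each contributing family of conjugacy classes has total cardinality of \(q\)-degree strictly below \(n^2\), and Corollary~\ref{rss_cc_and_cycle_type_sizes} packages this as \(\#\CT_\mu(q) = \#\CT_\mu^\Box(q) + O(q^{n^2-1})\). Dividing by \(\#\GL_n\F_q\) and letting \(q\to\infty\), the correction vanishes and the second limit equals the first.

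The main obstacle is precisely that last estimate: verifying that imposing a repeated irreducible factor strictly lowers the \(q\)-degree. The crux is the pair of inequalities \(\sum_j(\lambda'_j)^2 \ge |\lambda|\), with equality only when \(\lambda\) is a single part, and \(\sum_i i\,b_i \le n\), with equality only when \(b_i = m_i\) for every \(i\) (where \(b_i\) is the number of distinct degree-\(i\) irreducibles used); together these force the \(q\)-degree \(n^2\) to be attained only by the regular semisimple classes, since \(b_i = m_i\) for all \(i\) already forces every primary block to be a single part and hence the class to be regular semisimple. Once this is established — or simply quoted from Corollary~\ref{rss_cc_and_cycle_type_sizes} — both limits equal \(1/z_\mu\), and the corollary follows.
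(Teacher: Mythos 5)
Your proof is correct, and the paper itself does not spell out an argument for this corollary, so there is nothing in the text to compare against directly. For the first equality your computation is exactly what the referenced Corollary~\ref{rss_cc_and_cycle_type_sizes} is set up to yield: the ratio factors over $i$ into $\binom{\# \CF_i(q)}{m_i}\big/(q^i-1)^{m_i}$, and $\# \CF_i(q) \sim q^i/i$ gives the limit $1/z_\mu$. For the second equality your $q$-degree bookkeeping via Theorem~\ref{cc_sizes} is valid --- the two inequalities $\sum_j (\lambda'_j)^2 \ge |\lambda|$ (equality iff $\lambda$ is a single row) and $\sum_i i\,b_i \le n$ (equality iff $b_i = m_i$, which forces every $\ulambda(f)$ to be a single box) do pin down the degree-$n^2$ contribution to the regular semisimple classes alone. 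That said, there is a substantially shorter route that sidesteps all of the class-shape bookkeeping and stays closer to what the corollary's attribution suggests: since $\CT_\mu^\Box(q) \subseteq \CT_\mu(q)$ and the sets $\{\CT_\nu(q)\}_{\nu \vdash n}$ partition $\GL_n\F_q$, one has
\[
0 \;\le\; \frac{\#\CT_\mu(q) - \#\CT_\mu^\Box(q)}{\#\GL_n\F_q}
\;\le\; 1 - \sum_{\nu \vdash n} \frac{\#\CT_\nu^\Box(q)}{\#\GL_n\F_q}
\;\xrightarrow[q\to\infty]{}\; 1 - \sum_{\nu \vdash n} \frac{1}{z_\nu} \;=\; 0,
\]
using only the first limit and the classical identity $\sum_{\nu\vdash n} 1/z_\nu = 1$. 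This squeeze argument gets the same conclusion with no appeal to Theorem~\ref{cc_sizes} and no case analysis over primary types, so it is likely the intended reading of ``Corollary to Cor.~\ref{rss_cc_and_cycle_type_sizes}''; your direct degree estimate buys a sharper quantitative statement (the difference is $O(q^{n^2-1})$), which the squeeze argument does not provide.
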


Therefore, we are also interested in enumerating factorizations of regular semisimple elements.
For any \(k \in \N, \mu \vdash n\), and prime power \(q\), we define
\begin{equation}
g_{k,\mu}^\Box (q)
=
\# \{ (t_1,\ldots,t_k) \in \CT_{(n)} (q)^k : t_1 \cdots t_k \in \CT_\mu^\Box (q) \}.
\end{equation}
We consider the quantity \(g_{k,\mu}^\Box (q)\) to also be a \(\GL_n \F_q\)-analogue of \(g_{k,\mu}\),
and we seek efficient ways to compute \(g_{k,\mu}^\Box (q)\).
The following is our first main result.

\begin{theorem}
\label{re_main}
For all \(n, k, \ell \in \N\) with \(n > 2\) and \(\ell > 1\), all prime powers \(q\), and all \(\mu = (\mu_1,\ldots,\mu_\ell) \vdash n\) with \(\mu_{\ell-1} > \mu_\ell = 1\), we have
\begin{equation}
\label{mu_ell_equals_1_part_eq}
g_{k,\mu}^\Box(q)
=
\frac{\# \CT_{(n)} (q)^k
\cdot
\# \CT_\mu^\Box (q)}{\# \GL_n \F_q}
\cdot
\sum_{r=0}^{n-1}
\frac{(-1)^{rk} \chi^{(n-r,1^r)}_\mu}{\left ( q^{\binom{r+1}{2}} \cdot {\genfrac[]{0pt}{1}{n - 1}{r}}_{q} \right )^{k-1}}.
\end{equation}
\end{theorem}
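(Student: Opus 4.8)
The plan is to run the Frobenius character-theoretic technique of Section~\ref{approach}. Put \(G = \GL_n\F_q\), and for a conjugation-invariant subset \(S \subseteq G\) and \(\chi \in \mathrm{Irr}(G)\) write \(\hat\chi(S) = \sum_{s \in S}\chi(s)\). Applying the technique to the (central) class sums of \(\CT_{(n)}(q)\) and \(\CT_\mu^\Box(q)\) — i.e.\ expanding their product against the central idempotents of the group algebra — gives
\[
g_{k,\mu}^\Box(q) \;=\; \frac{1}{\#G}\sum_{\chi \in \mathrm{Irr}(G)} \frac{\hat\chi\big(\CT_{(n)}(q)\big)^{k}\,\overline{\hat\chi\big(\CT_\mu^\Box(q)\big)}}{\chi(1)^{k-1}}.
\]
Exactly as in Theorem~\ref{inspiration}, where the vanishing \(\chi^\lambda_{(n)} = 0\) for non-hook \(\lambda\) collapses the sum, the whole proof reduces to (a) showing all but \(n\) explicit terms vanish, and (b) evaluating those terms.

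For (a) I would use the classification of the irreducible characters of \(\GL_n\F_q\) by their semisimple labels together with the support properties of Deligne--Lusztig characters (Green's character table). If \(\chi\) does not vanish on some regular elliptic element, then \(\chi\) is a constituent of a Deligne--Lusztig character of the Coxeter torus \(\F_{q^n}^\times\), so the semisimple label \(s\) of \(\chi\) has characteristic polynomial \(f^{n/d}\) for a single irreducible \(f\) of degree \(d \mid n\). If moreover \(\chi\) does not vanish on some regular semisimple element of cycle type \(\mu\), then \(s\) must lie in a torus \(\prod_i \F_{q^{\mu_i}}^\times\); matching characteristic polynomials forces \(d \mid \mu_i\) for every \(i\), and the hypothesis \(\mu_{\ell-1} > \mu_\ell = 1\) (that is, \(m_1(\mu) = 1\)) then gives \(d \mid \mu_\ell = 1\), so \(d = 1\) and \(s\) is scalar. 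Hence \(\chi = (\alpha \circ \det)\cdot\psi_r\), where \(\alpha\) is a character of \(\F_q^\times\), \(\psi_r\) is the unipotent character of \(\GL_n\F_q\) indexed by the hook \((n-r,1^r)\), and \(r \in \{0,\dots,n-1\}\) — these \(\psi_r\) being the only unipotent characters not vanishing on regular elliptic elements, the exact analogue of the \(\FS_n\) hook phenomenon. Finally I would kill the twists: a regular semisimple element of cycle type \(\mu\) is given by distinct irreducibles of degrees \(\mu_1,\dots,\mu_{\ell-1}\) (all \(>1\)) together with a linear factor \(x - a\) with \(a \in \F_q^\times\) \emph{arbitrary}, its determinant is \(a\) times a quantity depending only on the other factors, and \(\psi_r\) is constant on these elements; so \(\overline{\hat{\chi}(\CT_\mu^\Box(q))}\) carries a factor \(\sum_{a \in \F_q^\times}\overline{\alpha(a)}\), which vanishes unless \(\alpha = 1\). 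Thus only \(\chi = \psi_0,\dots,\psi_{n-1}\) survive.

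For (b) I would record the ``simplified character values'' — the analogues of \cite[Lem.~2.2]{ncycle}: \(\psi_r(1) = q^{\binom{r+1}{2}}\,{\genfrac[]{0pt}{1}{n-1}{r}}_{q}\) (the \(q\)-hook-length formula for the hook \((n-r,1^r)\), whose hook-length multiset is \(\{n\}\cup\{1,\dots,n-r-1\}\cup\{1,\dots,r\}\), contributing the \(q\)-power \(q^{\binom{r+1}{2}}\)); \(\psi_r \equiv (-1)^r\) on \(\CT_{(n)}(q)\); and \(\psi_r \equiv \chi^{(n-r,1^r)}_\mu\) (an integer) on \(\CT_\mu^\Box(q)\). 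Then \(\hat\psi_r(\CT_{(n)}(q)) = (-1)^r\,\#\CT_{(n)}(q)\) and \(\overline{\hat\psi_r(\CT_\mu^\Box(q))} = \chi^{(n-r,1^r)}_\mu\,\#\CT_\mu^\Box(q)\); substituting into the master formula and pulling out \(\#\CT_{(n)}(q)^k\cdot\#\CT_\mu^\Box(q)/\#\GL_n\F_q\) gives exactly \eqref{mu_ell_equals_1_part_eq}.

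The main obstacle is step (a) and the two evaluations \(\psi_r \equiv (-1)^r\) on regular elliptic and \(\psi_r \equiv \chi^{(n-r,1^r)}_\mu\) on regular semisimple elements of type \(\mu\) — the genuine representation theory of \(\GL_n\F_q\), which the earlier sections must develop; the Frobenius reduction, the divisibility argument ruling out \(d \geq 2\), and the orthogonality relation removing the \(\det\)-twists are all formal. It is exactly these last two that explain the hypothesis: \(m_1(\mu) = 1\) both forces the semisimple label to be scalar and turns the twist-killing sum into a full character sum over \(\F_q^\times\).
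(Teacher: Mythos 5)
Your proposal is correct and runs structurally parallel to the paper's proof, with the key steps matching one-for-one: the Frobenius formula, the reduction to characters with \(d\mid\mu_i\) for all \(i\) so that \(\mu_\ell=1\) forces \(d=1\), the elimination of all but one surviving character by summing over the free linear eigenvalue in \(\F_q^\times\), Steinberg's constancy on regular semisimple classes (Theorem~\ref{sn_stuff}), and the \(q\)-hook specialization of the degree. The only real difference is framework: the paper works entirely inside Green's explicit parametrization (the ``PBJ'' construction, primary indices \(f\mapsto\lambda\), the value formula of Theorem~\ref{main_lemma_in_intro}) and kills the non-\((z-1)\) terms by observing that \(\sum_{\beta_\ell\in\F_q^\times}\te(\beta_\ell)^{\ell_f}=0\) unless \(q-1\mid\ell_f\); you phrase the same facts in Deligne--Lusztig/Jordan-decomposition language (semisimple labels, Coxeter torus support, determinant twists \(\alpha\circ\det\)) and kill the twists via \(\sum_{a\in\F_q^\times}\overline{\alpha(a)}=0\) for \(\alpha\neq 1\). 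These are two dictionaries for the same computation, and you correctly flag that the support-of-DL-characters input in step (a) is the nontrivial content the earlier sections must supply --- in the paper this is exactly Corollary~\ref{reg_ell_chi} and Theorem~\ref{main_lemma_in_intro}.
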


Compare \eqref{mu_ell_equals_1_part_eq} with the rephrasing
\begin{equation}
g_{k,\mu}
=
\frac{\# \CC_{(n)}^k \cdot \#\CC_\mu}{\# \FS_n}
\cdot
\sum_{r=0}^{n-1}
\frac{(-1)^{rk} \chi^{(n-r,1^r)}_\mu}{\binom{n-1}{r}^{k-1}}
\end{equation}
of \eqref{chi_phrasing_of_insp}.
Note, in particular, that, for the cases of \(\mu\) discussed in Theorem~\ref{re_main}, we have
\begin{equation}
\label{crazy_q_analogue_idea}
\lim_{q \to 1}
\frac{g_{k,\mu}^\Box (q) / \# \CT_{(n)} (q)^k}{\# \CT^\Box_\mu (q) / \# \GL_n \F_q}
=
\frac{g_{k,\mu} / \# \CC_{(n)}^k}{\# \CC_\mu / \# \FS_n}.
\end{equation}
Equation \eqref{crazy_q_analogue_idea} gives some justification that, after normalizing appropriately, \(g_{k,\mu}^\Box (q)\) is a \(q\)-analogue of \(g_{k,\mu}\) in the traditional \(q \to 1\) sense.

One special case of Theorem~\ref{re_main} is especially simple.
Note that \(g_{k,\mu} (q) = g_{k,\mu}^\Box (q)\) if all the parts of \(\mu\) are distinct.
\begin{corollary}
\label{n_minus_1_corollary}
For all \(n,k \in \N\) with \(n > 2\) and all prime powers \(q\), we have
\begin{equation}
\label{n_minus_1_equation}
g_{k,(n-1,1)} (q)
=
\frac{\# \CT_{(n)} (q)^k
\cdot
\# \CT_{(n-1,1)} (q)}{\# \GL_n \F_q}
\cdot
\left (
1 + \frac{(-1)^{nk-n-k}}{q^{\binom{n}{2}(k-1)}}
\right ).
\end{equation}
\end{corollary}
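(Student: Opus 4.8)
The plan is to derive Corollary~\ref{n_minus_1_corollary} as the special case $\mu = (n-1,1)$ of Theorem~\ref{re_main}. First I would check the hypotheses: $\mu = (n-1,1)$ has length $\ell = 2$, and since $n > 2$ we have $\mu_{\ell-1} = n-1 > 1 = \mu_\ell$, so \eqref{mu_ell_equals_1_part_eq} applies. Moreover, since $n > 2$ the parts $n-1$ and $1$ are distinct, so every matrix of cycle type $(n-1,1)$ is regular semisimple; hence $\CT_{(n-1,1)}^\Box(q) = \CT_{(n-1,1)}(q)$ and $g_{k,(n-1,1)}^\Box(q) = g_{k,(n-1,1)}(q)$. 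Thus Theorem~\ref{re_main} reduces the claim to the identity
\[
S := \sum_{r=0}^{n-1} \frac{(-1)^{rk}\,\chi^{(n-r,1^r)}_{(n-1,1)}}{\left(q^{\binom{r+1}{2}}\cdot{\genfrac[]{0pt}{1}{n-1}{r}}_q\right)^{k-1}} = 1 + \frac{(-1)^{nk-n-k}}{q^{\binom{n}{2}(k-1)}}.
\]

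The key step is to evaluate the hook characters $\chi^{(n-r,1^r)}_{(n-1,1)}$ of $\FS_n$. I would use the explicit formula for hook character values behind \eqref{explicit_phrasing_of_insp} (from \cite[Lem.~2.2]{ncycle}): for $\mu = (n-1,1)$ we have $m_1(\mu) = m_{n-1}(\mu) = 1$ and all other multiplicities zero, so in the inner sum over $\nu \vdash r$ the factor $\binom{m_1(\mu)-1}{m_1(\nu)} = \binom{0}{m_1(\nu)}$ forces $m_1(\nu) = 0$, while $\prod_{j=2}^{r}\binom{m_j(\mu)}{m_j(\nu)}$ forces $m_j(\nu) = 0$ for $2 \le j \le \min(r,n-2)$. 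For $0 < r \le n-2$ this leaves no valid $\nu$, so $\chi^{(n-r,1^r)}_{(n-1,1)} = 0$; for $r = 0$ the unique term $\nu = \emptyset$ gives $1$; and for $r = n-1$ the only surviving $\nu$ is $(n-1)$, whose contribution carries the sign $(-1)^{\sum_{a \ge 1}m_{2a}(\nu)}$, which is $-1$ if $n-1$ is even and $+1$ if $n-1$ is odd, i.e.\ $(-1)^n$. (Alternatively, the $r=0$ and $r=n-1$ values are the trivial and sign characters of $\FS_n$ on a permutation of type $(n-1,1)$, which has sign $(-1)^{(n-1)-1} = (-1)^n$, and Murnaghan--Nakayama kills the intermediate terms since no border strip of size $n-1$ can be removed from $(n-r,1^r)$ so as to leave a Young diagram of size one when $0 < r < n-1$.)

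It then remains to substitute. Only $r = 0$ and $r = n-1$ survive in $S$: the $r=0$ term is $1/(q^{\binom12}\cdot 1)^{k-1} = 1$ because $\binom12 = 0$ and ${\genfrac[]{0pt}{1}{n-1}{0}}_q = 1$, and the $r = n-1$ term is $(-1)^{(n-1)k}(-1)^n/(q^{\binom n2}\cdot 1)^{k-1} = (-1)^{(n-1)k+n}\,q^{-\binom n2(k-1)}$ because ${\genfrac[]{0pt}{1}{n-1}{n-1}}_q = 1$. Since $(n-1)k+n$ and $nk-n-k$ differ by $2n$, they have the same parity, so $(-1)^{(n-1)k+n} = (-1)^{nk-n-k}$, giving $S = 1 + (-1)^{nk-n-k}q^{-\binom n2(k-1)}$ and hence \eqref{n_minus_1_equation}. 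There is no real obstacle beyond Theorem~\ref{re_main} itself; the only points requiring care are the vanishing of $\chi^{(n-r,1^r)}_{(n-1,1)}$ for $0 < r < n-1$ and the sign bookkeeping that produces $(-1)^{nk-n-k}$.
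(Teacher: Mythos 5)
Your proof is correct and follows essentially the same approach as the paper: apply Theorem~\ref{re_main} with $\mu = (n-1,1)$ and observe that only the hook character values at $r = 0$ and $r = n-1$ survive. The paper's one-line proof simply cites Corollary~\ref{MNrule_ncycle}, which pre-records $\chi^{(n)}_{(n-1,1)}=1$, $\chi^{(1^n)}_{(n-1,1)}=(-1)^n$, and the vanishing for all other hooks, whereas you re-derive these values directly (both from Stanley's explicit hook formula and from the Murnaghan--Nakayama rule); your parity check that $(n-1)k+n \equiv nk-n-k \pmod 2$ is also correct.
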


Compare \eqref{n_minus_1_equation}
to the analogous formula
\begin{equation}
\label{n_minus_1_phrasing_of_insp}
g_{k,(n-1,1)}
=
\frac{\# \CC_{(n)}^k
\cdot
\# \CC_{(n-1,1)}}{\# \FS_n}
\cdot
\left (
1 + (-1)^{nk-n-k}
\right )
\end{equation}
from the symmetric group.
Observe that \eqref{n_minus_1_phrasing_of_insp} is zero unless both \(n\) and \(k\) are even.
However, this behavior is not mimicked by \eqref{n_minus_1_equation}.

Our second main result is an explicit, albeit complicated, formula for \(g_{k,(n)} (q)\), which involves nested sums over divisors of \(n\).
We require some more notation before stating the result.
Throughout the paper, we make use of the standard \(q\)-analogues
\begin{align}
[m]_q  & = 1 + q + q^2 + \cdots + q^{m-1},
\label{x_analogue_of_n}\\
[m]_q! & = \prod_{\ell = 1}^{m} [\ell]_q, \quad \text{and}
\label{x_analogue_of_n_factorial}\\
{\genfrac[]{0pt}{1}{m}{\ell}}_q
& = \frac{[m]_q!}{[\ell]_q! [m-\ell]_q!},
\label{x_analogue_of_n_choose_k}
\end{align}
each of which is an integer polynomial in \(q\), for \(\ell, m \in \N\).
We denote the usual M\"obius function by \(\moebius\) to differentiate it from a partition named \(\mu\).
The rest of the necessary notation is contained in Table~\ref{formula_table} below.
Note that the \(\lcm\) in the denominator of \(C_{n,k,c}(q)\) from Table~\ref{formula_table} is computed in \(\Z\).

\begin{theorem}
\label{nu_n_part}
For all \(n, k \in \N\) and prime powers \(q\),
we have
\begin{equation}
\label{nu_n_part_eq}
g_{k,(n)}(q) = P_{n,k+1} (q)
\sum_{d | n}
(-1)^{n(k+1)/d}
d^{k}
D_{n,k+1,d} (q)
\sum_{c | d}
\moebius(d/c)
C_{n,k+1,c} (q),
\end{equation}
using the notation in Table~\ref{formula_table}.
\end{theorem}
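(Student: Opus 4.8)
**Main approach: Frobenius's character formula applied to the product of regular elliptic classes, diagonalized over the characters of $\GL_n\F_q$.**

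The plan is to use the same character-theoretic engine as in Theorem~\ref{re_main}. The starting point is the classical Frobenius formula: for any $k$-tuple of conjugacy classes $C_1,\dots,C_k$ of a finite group $G$ and any element $g$,
\[
\#\{(t_1,\dots,t_k)\in C_1\times\cdots\times C_k : t_1\cdots t_k = g\}
=
\frac{\prod_i \#C_i}{\#G}
\sum_{\chi\in\mathrm{Irr}(G)}
\frac{\chi(C_1)\cdots\chi(C_k)\,\overline{\chi(g)}}{\chi(1)^{k-1}}.
\]
Applying this with $G=\GL_n\F_q$, all $C_i = \CT_{(n)}(q)$, and then summing $g$ over the class $\CT_{(n)}(q)$ itself (to land in the cycle type $(n)$ rather than a fixed element) gives
\[
g_{k,(n)}(q)
=
\frac{\#\CT_{(n)}(q)^{k}\,\#\CT_{(n)}(q)}{\#\GL_n\F_q}
\sum_{\chi}
\frac{\chi\bigl(\CT_{(n)}(q)\bigr)^{k}\,\overline{\chi\bigl(\CT_{(n)}(q)\bigr)}}{\chi(1)^{k-1}},
\]
so in fact the sum involves $\chi$ evaluated on a regular elliptic class to the $(k+1)$st power (up to a conjugate), which is why the stated formula has $k+1$ everywhere rather than $k$. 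The first key step is therefore to record exactly which irreducible characters of $\GL_n\F_q$ do \emph{not} vanish on regular elliptic elements, and to have closed forms for those character values and degrees. This is where Green's theory enters: the characters nonvanishing on a regular elliptic (Coxeter) torus are indexed by combinatorial data tied to the divisors $d\mid n$ — roughly, semisimple parts supported on a single orbit of Frobenius on characters of the degree-$n$ extension torus, together with a unipotent decoration — and the relevant character values are, up to sign, roots of unity or small integers, while the degrees are (up to a sign and a power of $q$) of the shape $q^{\binom{?}{2}}\genfrac[]{0pt}{1}{\,\cdot\,}{\,\cdot\,}_q$ refinements. The second step is to organize this sum: group the surviving $\chi$ by the divisor $d\mid n$ governing the size of the Frobenius orbit, extract the common degree/value factors as the quantities $P_{n,k+1}(q)$, $D_{n,k+1,d}(q)$, and the sign $(-1)^{n(k+1)/d}$ and multiplicity $d^{k}$, and then recognize the residual inner sum over the primitive structure as a Möbius-inverted sum $\sum_{c\mid d}\moebius(d/c)\,C_{n,k+1,c}(q)$ — the Möbius function appearing exactly because one is counting characters whose Frobenius orbit has size \emph{exactly} $d$ in terms of those whose orbit size merely \emph{divides} $d$ (a standard necklace/primitive-root inclusion–exclusion).

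Concretely, the steps in order: (1) state the Frobenius formula and specialize to $G=\GL_n\F_q$, all classes $=\CT_{(n)}(q)$, target cycle type $(n)$, collapsing to a sum over $\chi$ with exponent $k+1$; (2) invoke the classification of irreducibles of $\GL_n\F_q$ that are nonzero on regular elliptic elements, with explicit formulas for their values (up to sign/root of unity) and degrees — presumably already assembled in an earlier section, in the same spirit as the hook-character simplification quoted after Theorem~\ref{inspiration}; (3) partition these characters according to the divisor $d\mid n$ encoding the Frobenius-orbit size of the underlying semisimple datum, pull out the $d$-dependent but character-independent factors $(-1)^{n(k+1)/d}$, $d^{k}$, $D_{n,k+1,d}(q)$, and the global $P_{n,k+1}(q)$; (4) evaluate the remaining inner sum over characters with orbit size exactly $d$ by Möbius inversion against orbit size dividing $c$ for $c\mid d$, producing $\sum_{c\mid d}\moebius(d/c)\,C_{n,k+1,c}(q)$; (5) match constants against Table~\ref{formula_table} to arrive at \eqref{nu_n_part_eq}, and check the edge cases $n=1,2$ separately (where the sum over $d$ is trivial).

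The main obstacle is Step (2)–(3): correctly enumerating and evaluating the irreducible characters of $\GL_n\F_q$ on a regular elliptic class and, crucially, getting every sign, every power of $q$, and every $q$-binomial exponent right when repackaging Green's degree formulas into the compact quantities of Table~\ref{formula_table}. The unipotent-decoration part contributes the $q$-analogue combinatorics (the $P$, $D$ factors), while the semisimple part contributes the $d\mid n$ structure and the sign $(-1)^{n(k+1)/d}$ — and the interaction of the two, together with the $\lcm$ over $\Z$ hidden inside $C_{n,k,c}(q)$ (which signals that $C$ is genuinely rational, with the $\lcm$ clearing denominators coming from the $\chi(1)^{k-1}$ in the Frobenius formula), is exactly the delicate bookkeeping. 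I would expect the Möbius inversion in Step (4) to be routine once the orbit-counting is set up, and the $q\to 1$ and polynomiality sanity checks to serve as useful consistency tests along the way.
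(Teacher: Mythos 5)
Your proposal follows essentially the same route as the paper's proof: apply the Frobenius formula with $k+1$ copies of $\CT_{(n)}(q)$, restrict via Corollary~\ref{reg_ell_chi} to the primary characters $\chi^{f\mapsto(n/d-r,1^r)}$ that can be nonzero on regular elliptic elements, evaluate them explicitly on regular elliptic classes via Theorem~\ref{main_lemma_in_intro}, group by $d=\deg f$, and Möbius-invert over divisors of $d$ (Lemmas~\ref{pre_pre_main} and~\ref{post_pre_pre_main}, packaged in Corollary~\ref{pre_main}) to produce the inner sum $\sum_{c\mid d}\moebius(d/c)\,C_{n,k+1,c}(q)$. One correction to your parenthetical: $C_{n,k,c}(q)$ is in fact \emph{not} rational in $q$ (Remark~\ref{more_notation_and_rationality} makes this the obstruction to polynomiality), and the $\lcm$ in its definition does not ``clear $\chi(1)^{k-1}$ denominators'' but instead arises from counting residues in $\Z/(q^n-1)$ simultaneously divisible by $[n/c]_{q^c}, q^{s_1}-1,\ldots,q^{s_k}-1$; the degree factors $\chi(1)^{1-k}$ are absorbed into $D_{n,k+1,d}(q)$.
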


The analogous formula from \(\FS_n\) is
\begin{equation}
\label{sns_n_version}
g_{k,(n)}
=
\frac{(n-1)!^{k}}{n}
\sum_{r=0}^{n-1}
\left (
\frac{(-1)^r}{\binom{n-1}{r}}
\right )^{k-1}.
\end{equation}
Unfortunately, it is not immediately obvious how to compare \eqref{nu_n_part_eq} with \eqref{sns_n_version}.

\begin{table}[ht]
\caption{Functions and their values for \(n \in \N\), \(d | n\), \(c | d\), and prime powers \(q\)}
\centering
\renewcommand{\arraystretch}{2}
\begin{tabular}{|c|c|}
\hline
\(f\) & \(f(q)\) \\
\hline
\(
\gamma_{n}\) & \(q^{\binom{n}{2}} (q-1)^n [n]_q!
\) \\
\(
P_{n,k}\) & \(
\frac{1}{\gamma_{n}(q)}
\left ( \frac{(-1)^n \gamma_n (q)}{n (q^n-1)} \right )^k
\) \\
\(
\deg_{n,d,r}\) & \(
q^{d \binom{r+1}{2}}
\cdot
\frac{\prod_{i=1}^n (q^i - 1)}{\prod_{j=1}^{n/d} (q^{jd} - 1)}
\cdot
{\genfrac[]{0pt}{1}{n/d - 1}{r}}_{q^d}
\) \\
\(
D_{n,k,d} \) & \(
\sum_{r = 0}^{\tfrac{n}{d} - 1}
(-1)^{rk} \deg_{n,d,r}(q)^{2-k}
\) \\
\(
C_{n,k,c}\) & \(
\sum_{s_1,\ldots,s_k | n}
\frac{(q^n-1)\prod_{i=1}^k \left [ (q^{s_i}-1) \moebius( n / s_i ) \right ]}{\lcm \left (
\tfrac{q^n-1}{q^c-1},
q^{s_1}-1,
\ldots,
q^{s_k}-1
\right ) }
\) \\
\hline
\end{tabular}
\label{formula_table}
\end{table}

\begin{remark}
\label{more_notation_and_rationality}
The polynomiality of \eqref{x_analogue_of_n}--\eqref{x_analogue_of_n_choose_k} implies that each of \(\gamma_n(q)\), \(P_{n,k}(q)\), \(\deg_{n,d,r}(q)\), and \(D_{n,k,d}(q)\) is rational in \(q\) with rational coefficients.
The only thing preventing \(g_{k,(n)} (q)\) from being rational function as well is that \(C_{n,k,c} (q)\) is not rational in general due to the fact that the \(\lcm\) function is not rational.
\end{remark}

\begin{remark}
There are many cases not addressed by Theorems \ref{re_main} and \ref{nu_n_part}.
One family of unaddressed cases is when \(m_1(\mu) > 1\).
Another is when \(\ell > 1\) and \(m_1(\mu) = 0\).
It is an open problem to find efficient formulas for \(g_{k,\mu} (q)\) in these cases.
\end{remark}

Our approach to proving Theorems~\ref{re_main} and \ref{nu_n_part} is to apply the same character-theoretic technique due to Frobenius that Stanley used in \cite{ncycle}.
In order to do so, we first prove the following result regarding the evaluation of \myemph{primary} characters of \(\GL_n \F_q\) on regular semisimple elements.
See Sections~\ref{two} and \ref{three} for missing notation.
In particular,
primary characters are denoted \(\chi^{f \mapsto \lambda}\), where \(f \in \F_q [z] \setminus \{z\}\) is monic, irreducible, and non-constant, and \(\lambda\) is a partition such that \(|\lambda| \cdot \deg f = n\).
Also note that \(\ell_f \in \Z\) and the codomain of the function \(\te\) is \(\C^\times\).

\begin{theorem} \label{main_lemma_in_intro}
Suppose \(n \in \N\), \(d | n\), \(\lambda \vdash n/d\), \(q\) is a prime power, \(f \in \CF_d (q)\), \(\mu \vdash n\), \(g \in \CT_\mu^\Box (q)\), and \(h_1,\ldots,h_{\ell(\mu)}\) are the distinct irreducible factors of the characteristic polynomial of \(g\).
If some part of \(\mu\) is not divisible by \(d\), then \(\chi^{f \mapsto \lambda} (g) = 0\).
Otherwise, there exists \(\tilde{\mu} \vdash n/d\) such that \(\mu = d \tilde{\mu}\), and
\begin{equation}
\chi^{f \mapsto \lambda} (g)
= (-1)^{\tfrac{n}{d}(d-1)}
\chi^\lambda_{\tilde{\mu}}
\prod_{i = 1}^{\ell(\mu)}
\frac{1}{\tilde{\mu}_i}
\sum_{\substack{ \beta_i \in \F_{q^{\mu_i}} \\ h_i(\beta_i) = 0}} \te (\beta_i)^{\ell_f [\tilde{\mu}_i]_{q^d}}.
\end{equation}
\end{theorem}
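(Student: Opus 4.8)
The plan is to read off the value from Green's formula for the primary characters of \(\GL_n \F_q\) developed in Sections~\ref{two} and~\ref{three}, specialized to the regular semisimple element \(g\); I will cross-check the bookkeeping against the Jordan decomposition of characters of \(\GL\), under which \(\chi^{f \mapsto \lambda} = \pm R_M^{\GL_n\F_q}(\widehat{\theta_f}\otimes U^\lambda)\), where \(M\) is the Levi with \(M \cong \GL_{n/d}(\F_{q^d})\), \(\theta_f\) is the multiplicative character of \(\F_{q^d}^\times\) attached to \(f\) (so \(\theta_f = \te(\cdot)^{\ell_f}\)), \(\widehat{\theta_f} = \theta_f \circ \det_{\F_{q^d}}\) is its extension to \(M\), and \(U^\lambda\) is the unipotent character of \(\GL_{n/d}(\F_{q^d})\) labeled by \(\lambda\).

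First I would fix the block structure. Since \(g \in \CT_\mu^\Box(q)\), it is \(\GL_n \F_q\)-conjugate to \(\mathrm{diag}(g_1,\dots,g_{\ell(\mu)})\) with each \(g_i \in \GL_{\mu_i}\F_q\) regular elliptic and of characteristic polynomial \(h_i\), so its centralizer is the maximal torus \(\prod_i \F_{q^{\mu_i}}^\times\), the \(i\)-th factor generated by a root \(\beta_i\) of \(h_i\). Because \(g\) is regular semisimple, the Hall--Littlewood / Green-polynomial data entering Green's formula for \(\chi^{f\mapsto\lambda}(g)\) degenerates to the identity-unipotent case, and the value collapses to a sum over the ways of matching the irreducible factors \(h_1,\dots,h_{\ell(\mu)}\) with \(f\) (the unique polynomial on which the index function \((f\mapsto\lambda)\) is nonzero), weighted by products of root-of-unity character values times one \(\FS_{n/d}\)-character. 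Since \(\sum_i\mu_i = n = d\,|\lambda|\), every \(h_i\) must be matched to \(f\), and this requires \(\mu_i = \deg h_i\) to be a multiple of \(d = \deg f\), i.e. \(\F_{q^{\mu_i}} \supseteq \F_{q^d}\). Hence if some part of \(\mu\) is not divisible by \(d\) there is no such matching and \(\chi^{f\mapsto\lambda}(g)=0\). (Equivalently: \(\chi^{f\mapsto\lambda}\) is a \(\Z\)-combination of Deligne--Lusztig virtual characters \(R_T^\theta\) with \(T\) a maximal torus of type \((de_1,de_2,\dots)\), and such an \(R_T^\theta\) vanishes at the regular semisimple \(g\) unless \(T\) is \(\GL_n \F_q\)-conjugate to the centralizer of \(g\), which forces \(d\mid\mu_i\) for all \(i\).)

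In the remaining case \(\mu = d\tilde\mu\), each \(h_i\) splits over \(\F_{q^d}\) into \(d\) irreducible polynomials of degree \(\tilde\mu_i = \mu_i/d\); choosing one such factor for each \(i\) produces a regular semisimple element of \(\GL_{n/d}(\F_{q^d})\) of cycle type \(\tilde\mu\), and summing over the \(d^{\ell(\mu)}\) choices is exactly the sum realizing the Mackey formula for \(R_M^{\GL_n\F_q}\) at \(g\). I would then evaluate the surviving contribution as a product of three pieces. The \(\FS_{n/d}\)-character piece is \(\chi^\lambda_{\tilde\mu}\): it is the value of the unipotent character \(U^\lambda\) of \(\GL_{n/d}(\F_{q^d})\) at a regular semisimple element of cycle type \(\tilde\mu\), which one computes from \(U^\lambda = \frac{1}{(n/d)!}\sum_{w}\chi^\lambda(w)\,R_{T_w}^1\) together with the vanishing of \(R_{T_w}^1\) at such an element unless \(w\) has cycle type \(\tilde\mu\) (the \(\GL_{n/d}(\F_{q^d})\)-analogue of the computation behind \cite[Lem.~2.2]{ncycle}). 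The root-of-unity piece is, for each \(i\), the factor \(\frac{1}{\tilde\mu_i}\sum_{h_i(\beta_i)=0}\te(\beta_i)^{\ell_f[\tilde\mu_i]_{q^d}}\): the exponent \([\tilde\mu_i]_{q^d}\) enters because \(\beta_i^{[\tilde\mu_i]_{q^d}} = N_{\F_{q^{\mu_i}}/\F_{q^d}}(\beta_i)\) is the \(\F_{q^d}\)-determinant of the \(i\)-th block, on which \(\widehat{\theta_f}\) is evaluated, while \(\frac1{\tilde\mu_i}\) records that the \(\mu_i = d\tilde\mu_i\) roots of \(h_i\) fall into \(d\) orbits of size \(\tilde\mu_i\) under \(\mathrm{Gal}(\F_{q^{\mu_i}}/\F_{q^d})\), each orbit contributing one common value (one checks \(q^d[\tilde\mu_i]_{q^d}\equiv[\tilde\mu_i]_{q^d}\) modulo \(q^{\mu_i}-1\), so the exponent is constant on Galois orbits). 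The remaining factor is the sign \((-1)^{(n/d)(d-1)}\), obtained by combining the normalization sign of Lusztig induction, the sign in the unipotent character value above, and the labeling convention — whether \(\lambda\) or its conjugate indexes the \(\FS_{n/d}\)-character — so as to land on the stated expression.

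The structural reduction is routine once Green's value formula (or, on the Deligne--Lusztig side, the Jordan decomposition for \(\GL\)) is available; I expect the main obstacle to be precisely the last point: getting the overall sign and the partition-labeling convention to match \((-1)^{(n/d)(d-1)}\chi^\lambda_{\tilde\mu}\) exactly, rather than some twist of it, after unwinding the conventions built into \(\te\), \(\ell_f\), and the definition of \(\chi^{f\mapsto\lambda}\). A secondary difficulty, if one insists on an argument internal to Green's original formula rather than quoting Jordan decomposition, is to isolate cleanly which terms of Green's sum survive the regular-semisimple specialization and to recognize their combined combinatorial weight as the symmetric-group character \(\chi^\lambda_{\tilde\mu}\).
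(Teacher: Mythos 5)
Your approach is genuinely different from the paper's. The paper works entirely inside Green's ``PBJ'' formula: \(\chi^{f\mapsto\lambda} = J_f^\lambda\) unwinds via \eqref{jrreducible} to a sum of parabolic characters \(B_{d\nu}^{\ell_f\alpha_d}(g)\) over \(\nu \vdash n/d\); Lemma~\ref{d_divides_mu} kills every term except \(\nu = \tilde\mu\) (this is the vanishing claim); Lemma~\ref{parabolic_counting_lemma} — the only combinatorially nontrivial step, resting on the Brickman--Fillmore result (Thm.~\ref{helpful}) that the stable-subspace lattice of a regular semisimple element is Boolean — counts \(\{x : xgx^{-1}\in\para_\mu\}\) as \(\#\para_\mu\cdot\prod_i m_i(\mu)!\), and the \(z_{\tilde\mu}\) in the definition of \(J_f^\lambda\) then cancels exactly against that flag count to leave \(\prod 1/\tilde\mu_i\). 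Finally the surviving \(P_{\mu_i}\) values are read off directly from \eqref{P_chars}. Your Deligne--Lusztig route is structurally sound and would buy a cleaner conceptual explanation of the vanishing (both arguments ultimately say: the tori/parabolic shapes appearing in \(\chi^{f\mapsto\lambda}\) have all parts divisible by \(d\)), and your interpretation of \([\tilde\mu_i]_{q^d}\) as the norm-map exponent and of \(\frac{1}{\tilde\mu_i}\) via the \(d\) Galois orbits of size \(\tilde\mu_i\) is correct and checks out. But the paper never invokes Lusztig induction or Mackey theory; it stays within Green's original machinery, which keeps it self-contained relative to the tools already set up in Section~\ref{three}.

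That said, there is a real gap that you yourself flag: the sign \((-1)^{(n/d)(d-1)}\) and the \(\lambda\)-versus-\(\lambda'\) labeling of the unipotent character are not pinned down in your sketch, and these are precisely the places where the Lusztig-induction normalization, the sign in the Steinberg/Deligne--Lusztig formula for unipotent character values, and Green's conventions (built into \(\te\), \(\ell_f\), and \eqref{jrreducible}) must all be reconciled. In the paper's route this requires no reconciliation because the sign is literally a prefactor in \eqref{jrreducible}. A second, less explicitly acknowledged gap is the quantitative counting: your sketch asserts that ``summing over the \(d^{\ell(\mu)}\) choices is exactly the sum realizing the Mackey formula,'' but the multiplicity with which each \(\GL_{n/d}(\F_{q^d})\)-conjugacy class appears in the Mackey decomposition — the analogue of the \(\prod_i m_i(\mu)!\) of Lemma~\ref{parabolic_counting_lemma} — is asserted rather than computed, and it is exactly this number that must kill the \(z_{\tilde\mu}\) denominator to produce \(\prod 1/\tilde\mu_i\) rather than some other rational weight. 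Until that count and the sign/labeling are carried out, the proof is a plausible plan rather than a proof.
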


Theorem~\ref{main_lemma_in_intro} also enables us to answer probabilistic questions about multiplying regular elliptic elements randomly.
We define
\begin{equation}
p_{k,\mu} (q) = \frac{g_{k,\mu}(q)}{\# \CT_{(n)} (q)^k},
\end{equation}
which is the probability that the product of a randomly chosen \(k\)-tuple of regular elliptic elements is in \(\CT_\mu (q)\).
We are only concerned with the nontrivial cases \(k \geq 2\).
Of course, Theorems~\ref{re_main} and \ref{nu_n_part} provide exact formulas for \(p_{k,\mu} (q)\) in certain special cases, but we are also interested in the behavior of \(p_{k,\mu}(q)\) as \(q\) becomes arbitrarily large.
For the case of regular semisimple elements,
we define
\begin{equation}
\label{pBox_defs}
p_{k,\mu}^\Box (q) = \frac{g_{k,\mu}^\Box(q)}{\# \CT_{(n)} (q)^k}
\end{equation}
Again, Theorems~\ref{re_main} and \ref{nu_n_part} provide exact formulas for \(p^\Box_{k,\mu} (q)\) in some special cases.
However, we are able to compute \(\lim_{q \to \infty} p_{k,\mu} (q)\) and \(\lim_{q \to \infty} p_{k,\mu}^\Box (q)\) for all \(\mu \vdash n\).

\begin{theorem}
\label{intro_version_p}
For all \(n,k \in \N\) with \(k \geq 2\) and \(\mu \vdash n\), we have
\begin{equation}
\lim_{q \to \infty} p_{k,\mu}      (q) =
\lim_{q \to \infty} p_{k,\mu}^\Box (q) =
\frac{1}{z_\mu}.
\end{equation}
\end{theorem}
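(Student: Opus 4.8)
The plan is to apply the same Frobenius character sum used in Section~\ref{approach}. Since cycle type and regular semisimplicity are conjugacy invariants, each of $\CT_{(n)}(q)$, $\CT_\mu(q)$, and $\CT_\mu^\Box(q)$ is a union of conjugacy classes of $G:=\GL_n\F_q$, so Frobenius's formula applies; dividing the resulting expression for $g_{k,\mu}^\Box(q)$ by $\#\CT_{(n)}(q)^k$ gives
\[
p_{k,\mu}^\Box(q)
=
\frac{\#\CT_\mu^\Box(q)}{\# G}
\sum_{\chi\in\operatorname{Irr}(G)}
\frac{\widehat\chi\bigl(\CT_{(n)}(q)\bigr)^{k}\,\overline{\widehat\chi\bigl(\CT_\mu^\Box(q)\bigr)}}{\chi(1)^{k-1}},
\qquad
\text{where }\ \widehat\chi(S):=\frac{1}{\# S}\sum_{s\in S}\chi(s),
\]
and likewise for $p_{k,\mu}(q)$ with $\CT_\mu^\Box(q)$ replaced by $\CT_\mu(q)$. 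By Corollary~\ref{cycle_type_ratio_limit} the prefactor tends to $1/z_\mu$, and the term indexed by the trivial character $\mathbf 1$ contributes exactly $1$ to the character sum, so it suffices to prove that the remaining part of the sum tends to $0$ as $q\to\infty$.

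The first ingredient is an orthogonality bound: for any union $S$ of conjugacy classes of a finite group, column orthogonality gives $\sum_{\chi}\lvert\widehat\chi(S)\rvert^{2}=\# G/\# S$. Taking $S\in\{\CT_{(n)}(q),\CT_\mu^\Box(q),\CT_\mu(q)\}$ and invoking Corollary~\ref{cycle_type_ratio_limit} again, these sums stay bounded as $q\to\infty$ (they approach $z_{(n)}=n$, $z_\mu$, and $z_\mu$ respectively); in particular $\lvert\widehat\chi(S)\rvert$ is bounded by a constant depending only on $n$ and $\mu$, uniformly in $\chi$ and in (large) $q$. The second ingredient is the classical fact that the smallest degree $\delta_n(q)$ of a \emph{nonlinear} irreducible character of $\GL_n\F_q$ tends to $\infty$ with $q$ (for $n=1$ there are no nonlinear characters and this step is vacuous). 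Since $k\geq 2$, for nonlinear $\chi$ we have $\chi(1)^{k-1}\geq\delta_n(q)$, so the portion of the character sum over nonlinear $\chi\neq\mathbf 1$ is, in absolute value, at most a constant times $\delta_n(q)^{-1}\sum_{\chi}\lvert\widehat\chi(\CT_{(n)}(q))\rvert^{2}$, and hence tends to $0$.

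It remains to handle the $q-2$ \emph{linear} characters $\chi=\psi\circ\det$ with $\psi\neq\mathbf 1$; these have $\chi(1)=1$, so the denominator is of no help and a sharper input is required. Here $\widehat\chi(S)$ is a convex combination of complex numbers of modulus $1$, so $\lvert\widehat\chi(S)\rvert\leq 1$, and therefore, using $k\geq 2$, the linear part of the character sum is at most $\sum_{\psi\neq\mathbf 1}\lvert\widehat\chi(\CT_{(n)}(q))\rvert^{2}$ in absolute value. Writing $a_d$ for the probability that a uniformly random element of $\CT_{(n)}(q)$ has determinant $d\in\F_q^\times$, orthogonality of the characters of $\F_q^\times$ gives $\sum_{\psi}\lvert\widehat\chi(\CT_{(n)}(q))\rvert^{2}=(q-1)\sum_{d}a_d^{2}$, so the linear part is at most $(q-1)\sum_{d}\bigl(a_d-\tfrac{1}{q-1}\bigr)^{2}$. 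Now a regular elliptic element is determined up to conjugacy by its characteristic polynomial, whose constant term is $(-1)^n$ times the determinant, so $a_d$ equals the proportion of monic irreducible polynomials of degree $n$ over $\F_q$ with constant term $(-1)^n d$; a standard count — using that the norm map $\F_{q^{n}}^\times\to\F_q^\times$ has all fibers of equal size, together with a Weil-type bound for the subfield correction — shows that this proportion equals $\tfrac{1}{q-1}+O_n(q^{-n/2-1})$ uniformly in $d$. Hence the linear part is $O_n(q^{-n})\to 0$, the full character sum tends to $1$, and $p_{k,\mu}^\Box(q)\to 1/z_\mu$; the same argument with $\CT_\mu^\Box(q)$ replaced by $\CT_\mu(q)$ gives $p_{k,\mu}(q)\to 1/z_\mu$.

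The routine steps are the two appeals to Corollary~\ref{cycle_type_ratio_limit} and the orthogonality bookkeeping. The main obstacle is the treatment of the $q-2$ nontrivial linear characters: because each has degree $1$, the ``$\chi(1)^{k-1}$ in the denominator'' device that disposes of all other nontrivial characters is unavailable, and one must instead show that the determinant is nearly equidistributed on $\F_q^\times$ over the regular elliptic elements — equivalently, that the number of monic irreducible polynomials of degree $n$ with a prescribed nonzero constant term is essentially independent of that term — which rests on the equidistribution of the field norm together with a Weil-type error estimate. (Note that $k\geq 2$ is used essentially, both to produce the factor $\chi(1)^{k-1}$ and to bound $\lvert\widehat\chi\rvert^{k}$ by $\lvert\widehat\chi\rvert^{2}$.)
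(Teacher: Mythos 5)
Your proof is correct, but it takes a genuinely different route from the paper's. The paper first uses Corollary~\ref{reg_ell_chi} to restrict attention to the primary hook characters $\chi^{f,r}$ (the only characters of $\GL_n\F_q$ that can be nonzero on regular elliptic elements), then applies the explicit value formula of Theorem~\ref{main_lemma_in_intro} to reduce everything to divisor sums with explicit error exponents, isolating each piece $\Phi_{k,\mu,d,r}(q)$ indexed by $d\mid n$ and $r$. You instead run a black-box argument over all of $\operatorname{Irr}(\GL_n\F_q)$: you extract the trivial character, kill the nonlinear characters using the orthogonality identity $\sum_\chi\lvert\widehat\chi(S)\rvert^2=\#G/\#S$ (which, via Corollary~\ref{cycle_type_ratio_limit}, bounds all $\lvert\widehat\chi(S)\rvert$ uniformly) together with the classical fact that the minimal nonlinear degree of $\GL_n\F_q$ tends to infinity with $q$, and finally handle the $q-2$ nontrivial linear characters by a determinant-equidistribution argument. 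That last step is exactly where the paper's $d=1,\ r=0,\ f\neq z-1$ terms live (the linear characters of $\GL_n\F_q$ are precisely $\chi^{f\mapsto(n)}$ for $f\in\CF_1(q)$), and your equidistribution claim amounts to the same counting the paper does in Lemma~\ref{pre_pre_main}/Lemma~\ref{helvetica_scenario}. Two remarks: the ``Weil-type bound'' you invoke is overkill---the subfield correction in the count of monic irreducibles with a prescribed constant term is a M\"obius-inversion/surjective-norm argument whose error is polynomially bounded in $q$, exactly as in Lemma~\ref{helvetica_scenario}---and the minimal-nonlinear-degree input is not needed in the paper's version, which never has to look at nonlinear characters other than the primary hooks. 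Your route is more portable (it avoids Green's explicit formulas entirely), at the cost of importing those two external facts; the paper's route is longer but stays internal and yields sharper, explicit exponents for the error terms. You also prove both limits directly, whereas the paper derives $\lim_q p_{k,\mu}(q)=1/z_\mu$ from the $p_{k,\mu}^\Box$ case via $p_{k,\nu}(q)\ge p_{k,\nu}^\Box(q)$ and $\sum_\nu p_{k,\nu}(q)=1=\sum_\nu 1/z_\nu$; either way works.
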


According to Corollary~\ref{dense}, one interpretation of Theorem~\ref{intro_version_p} is that, for large \(q\), random products of regular elliptic elements are approximately distributed uniformly throughout \(\GL_n \F_q\).
We do not currently have a heuristic explanation for this behavior, nor do we know how random products of regular elliptic elements are distributed among individual conjugacy classes.

Even though Theorem~\ref{intro_version_p} describes the asymptotics of \(g_{k,\mu}(q)\) as \(q \to \infty\), it does not address the specific behavior of \(g_{k,\mu}(q)\) for small \(q\).
It turns out that Theorem~\ref{re_main} gives a family of examples where the function \(g_{k,\mu}^\Box (q)\) is a polynomial in \(q\).
See Corollary~\ref{full_polynomiality_for_some}.
However, as discussed in Remark~\ref{more_notation_and_rationality} above, \(g_{k,(n)} (q)\) is not necessarily a polynomial, or even a rational function, of \(q\).
Instead, we have the following result.

\begin{corollary}[to~Thm.~\ref{nu_n_part}]
\label{polynomiality_for_nu_n}
Suppose \(n,k \in \N\) and \(n\) is prime.
There exist polynomials \(f_0, f_1, \ldots, f_{n-1} \in \Q[x]\) with the property that, for each \(i \in \{0,\ldots,n-1\}\), we have
\begin{equation}
\label{its_a_quasipolynomial_yay}
g_{k,(n)}(q) = f_i(q) \quad \text{for all prime powers } q \equiv i \pmod n.
\end{equation}
In other words, \(g_{k,(n)} (q)\) is a quasipolynomial in \(q\) of quasiperiod \(n\).
\end{corollary}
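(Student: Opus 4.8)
The plan is to specialize the formula of Theorem~\ref{nu_n_part} to the case that \(n\) is prime, isolate its single non-rational ingredient, and observe that this ingredient is constant on each residue class modulo \(n\).

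First I would restrict the outer sum in \eqref{nu_n_part_eq} to the divisors \(d \in \{1, n\}\) and the inner sums to \(c = 1\) (when \(d = 1\)) and \(c \in \{1, n\}\) (when \(d = n\)), using \(\moebius(n) = -1\), to obtain the explicit formula
\[
g_{k,(n)}(q) = P_{n,k+1}(q)\left[(-1)^{n(k+1)} D_{n,k+1,1}(q)\, C_{n,k+1,1}(q) + (-1)^{k+1} n^{k} D_{n,k+1,n}(q)\bigl(C_{n,k+1,n}(q) - C_{n,k+1,1}(q)\bigr)\right].
\]
By Remark~\ref{more_notation_and_rationality}, \(P_{n,k+1}(q)\), \(D_{n,k+1,1}(q)\), and \(D_{n,k+1,n}(q)\) are rational functions of \(q\) with rational coefficients, so the only possible obstruction to \(g_{k,(n)}(q)\) being such a rational function comes from the integer least common multiples defining \(C_{n,k+1,1}(q)\) and \(C_{n,k+1,n}(q)\). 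Since \(n\) is prime, each index \(s_i\) in those sums lies in \(\{1, n\}\), and I would evaluate every occurring \(\lcm\) directly: using the integer divisibilities \(q - 1 \mid q^n - 1\) and \([n]_q \mid q^n - 1\) (valid for every prime power \(q\)), each \(\lcm\) turns out to be a polynomial in \(q\) \emph{except} the one attached to the tuple \((1, 1, \ldots, 1)\) in \(C_{n,k+1,1}(q)\), namely \(\lcm([n]_q, q - 1)\). The decisive computation here is \([n]_q \equiv n \pmod{q - 1}\), so that \(\gcd([n]_q, q - 1) = \gcd(n, q - 1)\) and \(\lcm([n]_q, q - 1) = (q^n - 1)/\gcd(n, q - 1)\); feeding this back, the offending term of \(C_{n,k+1,1}(q)\) equals \((-1)^{k+1}(q - 1)^{k+1}\gcd(n, q - 1)\). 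Collecting everything, there are fixed rational functions \(A, B \in \Q(q)\) with rational coefficients such that \(g_{k,(n)}(q) = A(q) + \gcd(n, q - 1)\cdot B(q)\) for all prime powers \(q\).

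Because \(n\) is prime, \(\gcd(n, q - 1)\) depends only on \(q \bmod n\): it is \(n\) when \(q \equiv 1 \pmod n\) and \(1\) otherwise. Hence for each \(i \in \{0, 1, \ldots, n - 1\}\) there is a fixed \(F_i \in \Q(q)\) (obtained by substituting the appropriate constant for \(\gcd(n, q - 1)\) above) with \(g_{k,(n)}(q) = F_i(q)\) for every prime power \(q \equiv i \pmod n\). To finish, I would upgrade ``rational function'' to ``polynomial'': \(g_{k,(n)}(q)\) is a non-negative integer for every prime power \(q\), and every residue class modulo the prime \(n\) contains infinitely many prime powers (Dirichlet's theorem when \(\gcd(i, n) = 1\), and the powers of \(n\) when \(i \equiv 0\)), so each \(F_i\) takes integer values at infinitely many integers. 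But a rational function \(A/B \in \Q(x)\) in lowest terms with \(\deg B \ge 1\) takes integer values at only finitely many integers: from a B\'ezout identity \(UA + VB = c\) with \(U, V \in \Z[x]\) and \(c \in \Z_{> 0}\), any integer \(m\) with \(A(m)/B(m) \in \Z\) forces \(B(m) \mid c\), hence \(|B(m)| \le c\), which bounds \(m\). Therefore each \(F_i\) is a polynomial, and we take \(f_i = F_i\). I expect the main obstacle to be the middle step: one must carefully verify that, for \(n\) prime, all of the integer least common multiples appearing in \(C_{n,k+1,c}(q)\) collapse to polynomials in \(q\) except for the single factor \(\gcd(n, q - 1)\), which is precisely what forces the quasiperiod to divide \(n\); the concluding passage from rational functions to polynomials is then a clean general fact.
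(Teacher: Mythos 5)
Your proposal is correct and follows essentially the same route as the paper: both reduce, for prime \(n\), to the observation that the only non-rational ingredient in \eqref{nu_n_part_eq} is \(\lcm([n]_q, q-1) = (q^n-1)/\gcd(n, q-1)\), both compute \(\gcd([n]_q, q-1) = \gcd(n, q-1)\) and note that this is constant on residue classes modulo \(n\), and both finish by the general principle that a rational function over \(\Q\) taking integer values at infinitely many integers must be a polynomial (the paper delegates this to the argument in Corollary~\ref{full_polynomiality_for_some}, whereas you spell it out via B\'ezout). Your presentation is slightly more explicit in writing out the fully expanded formula and isolating the single offending summand \((-1)^{k+1}(q-1)^{k+1}\gcd(n,q-1)\), but the key computation and its consequences are the same.
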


The rest of the paper is organized as follows.
In Section~\ref{two}, we discuss some preliminary information, including the character-theoretic technique and details regarding the symmetric groups, finite fields, and the finite general linear groups.
In Section~\ref{three}, we provide a concise retelling of Green's original formulation of the characters of the finite general linear groups \cite{green}.
In Section~\ref{main_results_section}, we prove Theorems~\ref{re_main} and \ref{nu_n_part}, our main enumerative results.
In Section~\ref{probabilistic_section}, we prove Theorem~\ref{intro_version_p}, our main probabilistic result.
In Section~\ref{outro}, we discuss polynomiality, prove Corollary~\ref{polynomiality_for_nu_n}, and list some open problems.

\section{Preliminaries}
\label{two}

\subsection{The character theory approach}
\label{approach}

We assume basic knowledge of the ordinary complex character theory of finite groups.
All characters we refer to in this paper are complex.
See Fulton-Harris \cite{FH} or Serre \cite{serre} for a review.
We will make use of a standard character-theoretic technique based on the following result due to Frobenius.
Let \(G\) be a finite group, and let \(\Irr (G)\) denote the set of all irreducible characters of \(G\).
For \(\chi \in \Irr (G)\), let \(\deg \chi\) denote the value of \(\chi\) at the identity element of \(G\).
For a straightforward proof of the following theorem, see Zagier's Appendix A in Lando-Zvonkin\cite{LZ}.

\begin{theorem}[Frobenius \cite{frob}]\label{frob}
Let \(k\) be a positive integer, and, for each \(i \in \{1,\ldots,k\}\), let \(A_i\) be a union of conjugacy classes in \(G\).
For any \(g \in G\), the number of tuples \((t_1,\ldots,t_k) \in A_1 \times \cdots \times A_k\) such that \(t_1 \cdots t_k = g\) is given by
\begin{equation}
\frac{1}{\# G} \sum_{\chi \in \Irr (G)} (\deg \chi)^{1-k} \chi (g^{-1}) \prod_{i = 1}^k \sum_{t \in A_i} \chi(t).
\label{ff}
\end{equation}
\end{theorem}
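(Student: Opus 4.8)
The plan is to argue inside the center \(Z(\C[G])\) of the complex group algebra \(\C[G]\) by multiplying class sums. For a subset \(A \subseteq G\) that is a union of conjugacy classes, set \(\hat{A} = \sum_{t \in A} t \in \C[G]\); conjugation-invariance of \(A\) makes each \(\hat{A}\) central. The first step is the bookkeeping identity
\[
\hat{A}_1 \cdots \hat{A}_k \;=\; \sum_{g \in G} N(g)\, g, \qquad N(g) \;=\; \#\{(t_1,\ldots,t_k)\in A_1\times\cdots\times A_k : t_1\cdots t_k = g\},
\]
which says that the sought count \(N(g)\) is exactly the coefficient of \(g\) in the single central element \(z := \hat{A}_1 \cdots \hat{A}_k\).

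Next I would feed \(z\) into the irreducible representations. For each \(\chi \in \Irr(G)\) fix a representation \(\rho_\chi\) affording \(\chi\) and extend it linearly to \(\C[G]\). Since \(z\) is central and \(\rho_\chi\) is irreducible, Schur's lemma gives \(\rho_\chi(z) = \omega_\chi(z)\, I_{\deg\chi}\) for a scalar \(\omega_\chi(z) \in \C\), and the assignment \(z \mapsto \omega_\chi(z)\) is an algebra homomorphism \(Z(\C[G]) \to \C\). Taking the trace of \(\rho_\chi(\hat{A}_i) = \omega_\chi(\hat{A}_i)\, I_{\deg\chi}\) evaluates each factor: \(\omega_\chi(\hat{A}_i) = (\deg\chi)^{-1}\sum_{t\in A_i}\chi(t)\). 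Multiplicativity of \(\omega_\chi\) then gives \(\omega_\chi(z) = (\deg\chi)^{-k}\prod_{i=1}^k \sum_{t\in A_i}\chi(t)\).

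The last step is to recover the coefficient of \(g\) in \(z\). For an arbitrary element \(w = \sum_{h\in G} c_h h \in \C[G]\) one has the coefficient-extraction formula \(c_g = (\#G)^{-1}\sum_{\chi\in\Irr(G)} \deg\chi \cdot \chi(w g^{-1})\), proved by expanding the right-hand side and invoking the second orthogonality relation \(\sum_\chi \deg\chi \cdot \chi(x) = \#G\cdot[x = e]\). Applying this with \(w = z\) and using \(\rho_\chi(z g^{-1}) = \omega_\chi(z)\,\rho_\chi(g^{-1})\), hence \(\chi(z g^{-1}) = \omega_\chi(z)\,\chi(g^{-1})\), yields
\[
N(g) = \frac{1}{\#G}\sum_{\chi\in\Irr(G)} \deg\chi \cdot \omega_\chi(z)\cdot \chi(g^{-1}) = \frac{1}{\#G}\sum_{\chi\in\Irr(G)} (\deg\chi)^{1-k}\,\chi(g^{-1})\prod_{i=1}^k \sum_{t\in A_i}\chi(t),
\]
which is \eqref{ff}.

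Since this is a classical identity, I do not expect a genuine obstacle; the only points needing care are verifying that \(\hat{A}_1\cdots\hat{A}_k\) is actually central (so that Schur's lemma applies), that \(\omega_\chi\) is multiplicative, and that \(\omega_\chi(\hat{A}_i)\) is the normalized character sum. An alternative, group-algebra-free route is induction on \(k\): the base case \(k=1\) is the identity \([g\in A_1] = (\#G)^{-1}\sum_\chi \chi(g^{-1})\sum_{t\in A_1}\chi(t)\), again a consequence of second orthogonality, and the inductive step convolves \(A_k\) into the product over \(A_1,\ldots,A_{k-1}\) using column orthogonality of the character table. The class-algebra argument above is simply this induction packaged more efficiently.
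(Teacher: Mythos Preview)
Your argument is correct and is the standard class-algebra proof of Frobenius's formula. Note, however, that the paper does not actually prove this theorem: it merely states it and refers the reader to Zagier's Appendix~A in Lando--Zvonkin for a proof. The approach you outline---multiply class sums in \(Z(\C[G])\), use Schur's lemma to compute the central character \(\omega_\chi\) on each factor, then extract the coefficient of \(g\) via second orthogonality---is exactly the argument Zagier gives there, so there is nothing to compare beyond noting that your write-up matches the cited reference.
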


\begin{corollary}
\label{ff_in_situ_corollary}
For all \(n,k \in \N\), \(\mu \vdash n\), and prime powers \(q\),
\begin{equation}
g_{k,\mu} (q)
=
\label{ff_in_situ}
\frac{1}{\# \GL_n \F_q}
\sum_{\chi \in \Irr (\GL_n \F_q)}
(\deg \chi)^{1-k}
\left ( \sum_{g \in \CT_{(n)} (q)} \chi(g) \right )^k
\left ( \sum_{h \in \CT_\mu (q)}   \chi(h) \right ).
\end{equation}
Moreover, the same is true when both \(g_{k,\mu} (q)\) is replaced with \(g_{k,\mu}^\Box (q)\) and \(\CT_\mu (q)\) is replaced with \(\CT_\mu^\Box (q)\).
\end{corollary}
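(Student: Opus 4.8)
The plan is to apply Frobenius's formula (Theorem~\ref{frob}) directly, taking $G = \GL_n \F_q$ and $A_1 = \cdots = A_k = \CT_{(n)}(q)$, and then to sum the resulting count over all target elements of the prescribed cycle type. First I would verify that the sets involved are genuinely unions of conjugacy classes, as Theorem~\ref{frob} requires. Conjugation preserves the characteristic polynomial of a matrix, hence preserves the multiset of degrees of its irreducible factors, hence preserves cycle type; so each $\CT_\nu(q)$, and in particular $\CT_{(n)}(q)$, is a union of conjugacy classes. The same observation, together with the fact that conjugation preserves the property of having no repeated irreducible factor, shows that each $\CT_\nu^\Box(q)$ is a union of conjugacy classes as well.

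Next I would sum the conclusion of Theorem~\ref{frob} over $g \in \CT_\mu(q)$. For a fixed $g$, Theorem~\ref{frob} expresses the number of $(t_1,\ldots,t_k) \in \CT_{(n)}(q)^k$ with $t_1 \cdots t_k = g$ as $\tfrac{1}{\#\GL_n\F_q}\sum_{\chi}(\deg\chi)^{1-k}\chi(g^{-1})\bigl(\sum_{t \in \CT_{(n)}(q)}\chi(t)\bigr)^k$. Summing over $g \in \CT_\mu(q)$ and interchanging the two finite sums produces $g_{k,\mu}(q)$ on the left-hand side and, on the right-hand side, the same expression with the factor $\chi(g^{-1})$ replaced by $\sum_{g \in \CT_\mu(q)}\chi(g^{-1})$.

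To finish I would show that inversion is a bijection of $\CT_\mu(q)$ onto itself. If $g \in \GL_n \F_q$ has characteristic polynomial $p(z) = \prod_j p_j(z)$ written in its irreducible factorization, then $z \nmid p$ since $g$ is invertible, and the characteristic polynomial of $g^{-1}$ equals, up to a nonzero scalar, the reversal $z^n p(1/z) = \prod_j z^{\deg p_j} p_j(1/z)$; each reversed factor is again irreducible of the same degree, so $\type(g^{-1}) = \type(g)$. Therefore $\sum_{g \in \CT_\mu(q)}\chi(g^{-1}) = \sum_{h \in \CT_\mu(q)}\chi(h)$, which yields \eqref{ff_in_situ}. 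The boxed statement then follows verbatim: the same reversal argument shows inversion also preserves $\CT_\mu^\Box(q)$, so one may replace $\CT_\mu(q)$ by $\CT_\mu^\Box(q)$ throughout without changing any step.

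There is essentially no serious obstacle here; the one point that deserves care is that the irreducible characters of $\GL_n \F_q$ need not be real-valued, so one cannot simply rewrite $\chi(g^{-1})$ as $\overline{\chi(g)}$ and drop it. It is precisely the inverse-closedness of $\CT_\mu(q)$ (respectively $\CT_\mu^\Box(q)$), established via the reversal of the characteristic polynomial, that legitimizes replacing the $\chi(g^{-1})$ term by the sum of $\chi$ over the class.
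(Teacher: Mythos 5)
Your proof is correct and rests on the same two ingredients the paper uses --- Frobenius's formula (Theorem~\ref{frob}) and the closure of \(\CT_\mu(q)\) under inversion --- but arranged a little differently. The paper rewrites \(g_{k,\mu}(q)\) as the number of \((k+1)\)-tuples in \(\CT_{(n)}(q)^k\times\CT_\mu(q)\) whose product is \(\id\) (using inversion-closure to set up that bijection) and applies Theorem~\ref{frob} once with \(k+1\) factors and target \(\id\); the factor \(\chi(\id)=\deg\chi\) then turns the exponent \(1-(k+1)\) into \(1-k\). You instead apply Theorem~\ref{frob} with \(k\) factors to each fixed target \(g\), sum the resulting identity over \(g\in\CT_\mu(q)\), and use inversion-closure to replace \(\sum_{g}\chi(g^{-1})\) by \(\sum_{h}\chi(h)\). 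The two computations are algebraically equivalent and equally short. A useful bonus in your version is the explicit verification, via reversal of the characteristic polynomial, that inversion preserves cycle type and the regular-semisimple property --- the paper asserts this closure without justification. One small quibble on your closing remark: \(\chi(g^{-1})=\overline{\chi(g)}\) does in fact hold for every complex character of a finite group, so it is not that one cannot write the conjugate; the point is that one cannot drop the conjugation termwise, and it is precisely the bijection \(g\mapsto g^{-1}\) of \(\CT_\mu(q)\) (resp.\ \(\CT_\mu^\Box(q)\)) onto itself that does the real work in the aggregate --- which is exactly what you supply.
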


\begin{proof}
Consider applying Theorem~\ref{frob} to the case of \(k+1\) factors, the first \(k\) of which are regular elliptic and the last of which has cycle type \(\mu\).
Each \(\CT_\mu (q)\) is a union of conjugacy classes, and so the hypotheses of Theorem~\ref{frob} are satisfied.
Moreover, each \(\CT_\mu (q)\) is closed under taking inverses, implying factorizations of the form
\[
(t_1,\ldots,t_k) \in \CT_{(n)}(q)^k
\quad
\text{such that}
\quad
t_1 \cdots t_{k+1} \in \CT_\mu (q)
\]
are in bijection with factorizations of the form
\[
(t_1,\ldots,t_k,t_{k+1}) \in \CT_{(n)}(q)^k \times \CT_\mu (q)
\quad
\text{such that}
\quad
t_1 \cdots t_{k+1} = \id.
\]
Thus,
\[
g_{k,\mu} (q)
=
\# \{ (t_1,\ldots,t_k,t_{k+1}) \in \CT_{(n)} (q)^k \times \CT_\mu (q) : t_1 \cdots t_{k+1} = \id \}.
\]
Applying Theorem~\ref{frob} with
\[
A_1 = A_2 = \cdots = A_k = \CT_{(n)}(q), \quad A_{k+1} (q) = \CT_\mu (q), \quad \text{and } g = \id
\]
gives the result.
The final claim follows from the same argument.
\end{proof}

\subsection{The symmetric groups and partitions}

We will require some specific information about the irreducible characters of the symmetric group \(\FS_n\).
This information can be found in Stanley \cite{EC2} and Sagan \cite{sagan}. See also Fulton-Harris \cite{FH} or Fulton \cite{fulton} for added discussion on the irreducible characters of the symmetric groups.

A \myemph{partition} of \(n\) is a weakly decreasing sequence of non-negative integers \(\mu = (\mu_1,\mu_2,\ldots)\) such that \(\sum_{i \geq 1} \mu_i = n\), denoted by \(\mu \vdash n\).
Denote by \(\emptyset\) the unique partition of \(0\) and by \(\Box\) the unique partition of \(1\).
Let \(\Par\) denote the set of all partitions of all non-negative integers.
Each \(\mu_i\) is called a \myemph{part} of \(\mu\).
The number of nonzero parts of \(\mu\) is called the \myemph{length} of \(\mu\) and is denoted by \(\ell(\mu)\).
The \myemph{conjugate} of \(\mu\) is denoted by \(\mu'\) and defined by \(\mu'_i = \# \{j \geq 1 : \mu_j \geq i\}\) for all \(i \geq 0\).
The \myemph{multiplicity} of a positive integer \(i\) in a partition \(\mu\) is defined as \(\# \{j \geq 1 : \mu_j = i \}\) and denoted by \(m_i (\mu)\).
Also define \(s_i (\mu) = \sum_{j=1}^i \mu_j \).
In general, if some part of a partition is repeated, we denote this with a superscript.
Moreover, we omit zeros.
For example, \((3,2^4)\) is the same as \((3,2,2,2,2)\).
A partition of the form \((n-r,1^r) \vdash n\) for some \(r \in \{0,\ldots,n-1\}\) is called a \myemph{hook}.
An important statistic on partitions is \(\mu \mapsto z_\mu\) defined by \(z_\mu = \prod_{i \geq 1} i^{m_i(\mu)} \cdot m_i(\mu)!\).
If \(d\) is a positive integer, then we use \(d \mu\) to denote the partition \((d \mu_1, d \mu_2, \ldots )\) of \(dn\).

The conjugacy classes of \(\FS_n\) are in bijection with the partitions of \(n\) as follows.
If \(\pi = (\pi_{1,1},\ldots,\pi_{1,\mu_1}) \cdots (\pi_{\ell,1},\ldots,\pi_{\ell,\mu_\ell}) \in \FS_n\) is a cycle decomposition of $\pi$ with $\mu_1 \geq \mu_2 \geq \cdots \geq \mu_\ell$, then the conjugacy class of $\pi$ is indexed by the partition $\mu = (\mu_1,\mu_2,\ldots,\mu_\ell)$.
The partition \(\mu\) is called the \myemph{cycle type} of the permutation $\pi$.
Let $\CC_\mu$ denote the conjugacy class consisting of those permutations with cycle type $\mu$.
The statistic $\mu \mapsto z_\mu$ has the following algebraic interpretation.
If $\sigma \in \FS_n$ has cycle type $\mu$, then $z_\mu$ is the number of permutations in $\FS_n$ which commute with $\sigma$.
By the orbit-stabilizer theorem \cite[Prop.~4.3.6]{DF},
\(\# \CC_\mu = n! / z_\mu\).

The irreducible characters of \(\FS_n\) are indexed by partitions of \(n\) in a standard way.
If $\lambda \vdash n$, let $\chi^\lambda$ denote the character indexed by $\lambda$.
Let $\chi^\lambda_\mu$ denote $\chi^\lambda$ evaluated on any element of \(\CC_\mu\).
There is a combinatorial formula, known as the Murnaghan-Nakayama (MN) rule, for computing the irreducible character values for the symmetric groups \cite{murnaghan, nakayama}. See \cite[Thm.~7.17.3]{EC2} for a full statement and proof.
We will use the following two special cases.

\begin{samepage}
\begin{corollary}[to the MN rule] \label{MNrule_ncycle}
For all \(n \in \N\) and \(\lambda \vdash n\), we have
\begin{equation}
\chi^\lambda_{(n)} =
\begin{cases}
(-1)^r, & \lambda = (n-r,1^r) \text{ for some } r \in \{0,\ldots,n-1\}, \\
0, & \text{otherwise},
\end{cases}
\end{equation}
and
\begin{equation}
\chi^\lambda_\mu = \begin{cases}
1,          & \lambda = (n), \\
(-1)^n,     & \lambda = (1^n), \\
(-1)^{r-1}, & \lambda = (n-r, 2, 1^{r-2}) \text{ for some } r \in \{2,\ldots,n-2\}, \\
0,          & \text{otherwise}.
\end{cases}
\end{equation}
\end{corollary}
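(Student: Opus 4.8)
The plan is to deduce both identities directly from the Murnaghan-Nakayama (MN) rule \cite[Thm.~7.17.3]{EC2}, which expresses \(\chi^\lambda_\mu\) as a sum over border-strip tableaux of shape \(\lambda\) and content \(\mu\), each such tableau contributing \(-1\) raised to its total height (the number of rows met by its strips, minus the number of strips).

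For \(\chi^\lambda_{(n)}\): since \((n)\) has a single part, the MN rule reduces to a sum over ways to cover \(\lambda\) by one border strip of size \(n\); thus \(\chi^\lambda_{(n)}\) is nonzero exactly when the diagram of \(\lambda\) is itself a border strip, in which case the unique tableau has sign \((-1)^{h-1}\), where \(h\) is the number of rows of \(\lambda\). A straight Young diagram is a border strip iff it contains no \(2 \times 2\) block of cells, and I would check that this happens iff \(\lambda = (n-r,1^r)\) is a hook: if \(\lambda_1 \geq 2\) and \(\lambda_2 \geq 2\) then \((1,1),(1,2),(2,1),(2,2)\) form such a block, while a hook is connected and contains no such block. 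Since \((n-r,1^r)\) has \(h = r+1\) rows, this gives \(\chi^{(n-r,1^r)}_{(n)} = (-1)^r\), and \(0\) for every non-hook \(\lambda\).

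For \(\chi^\lambda_{(n-1,1)}\) (the class needed in Corollary~\ref{n_minus_1_corollary}): I would apply the MN rule by removing the size-\((n-1)\) strip first. Whatever strip \(B\) is removed, \(\lambda \setminus B\) is a Young diagram of size \(1\), hence the single cell \((1,1)\); so \(B\) is forced to be the skew shape \(\lambda/(1)\), and since the remaining size-\(1\) strip always contributes sign \(+1\), the computation reduces to \(\chi^\lambda_{(n-1,1)} = (-1)^{\mathrm{ht}(\lambda/(1))}\) when \(\lambda/(1)\) is a border strip, and \(0\) otherwise. It then remains to determine for which \(\lambda\) the shape \(\lambda/(1)\) is connected and has no \(2 \times 2\) block, and to read off its height. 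I would split into cases: \(\lambda = (n)\) gives a single row (height \(0\), value \(1\)); \(\lambda = (1^n)\) gives a column of \(n-1\) cells (height \(n-2\), value \((-1)^{n-2} = (-1)^n\)); and if \(\lambda_1 \geq 2\) and \(\ell(\lambda) \geq 2\) then \(\lambda/(1)\) is connected iff \(\lambda_2 \geq 2\) and has no \(2\times 2\) block iff \(\lambda_2 \leq 2\) and \(\lambda_3 \leq 1\), which together force \(\lambda = (n-r,2,1^{r-2})\) with \(2 \leq r \leq n-2\), a shape occupying \(r\) rows and hence of value \((-1)^{r-1}\). Every remaining \(\lambda\) (in particular every hook \((n-r,1^r)\) with \(1 \leq r \leq n-2\)) makes \(\lambda/(1)\) disconnected or forces a \(2\times 2\) block, giving \(0\); assembling the cases yields the claimed formula.

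The only delicate step is this last case analysis: pinning down exactly when \(\lambda/(1)\) is a border strip --- getting the connectivity condition \(\lambda_2 \geq 2\) together with the two ``no \(2\times 2\)'' conditions \(\lambda_2 \leq 2\) and \(\lambda_3 \leq 1\) simultaneously right --- and then computing its height without an off-by-one error. There is no conceptual obstacle; it is careful bookkeeping with the MN rule, together with checking the degenerate small cases (for instance, the index set \(\{2,\ldots,n-2\}\) being empty when \(n \leq 3\), so that only \((n)\) and \((1^n)\) survive).
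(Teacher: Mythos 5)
Your argument is correct and is essentially the approach the paper intends: the paper states this corollary without proof, merely pointing to the Murnaghan--Nakayama rule in \cite[Thm.~7.17.3]{EC2}, and your border-strip case analysis supplies exactly the missing details (the ``no $2\times 2$ block'' criterion for a skew shape to be a border strip, plus the height bookkeeping). You also correctly resolved the notational slip in the statement, where the second display writes $\chi^\lambda_\mu$ but plainly means $\chi^\lambda_{(n-1,1)}$, as the use in the proof of Corollary~\ref{n_minus_1_corollary} confirms.
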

\end{samepage}

\subsection{Finite fields}

We assume some basic knowledge about finite fields, all of which can be found in Dummit-Foote \cite{DF}.
For all positive integers \(m\), there is a degree \(m\) field extension \(\F_{q^m}\) of \(\F_q\).
For positive integers \(m\) and \(m'\), we have the containment \(\F_{q^m} \subset \F_{q^{m'}}\) if and only if \(m | m'\).
For any field \(\F\), let \(\F^\times\) denote the multiplicative group of its nonzero elements, called the \myemph{unit group}.
The unit group of any finite field is cyclic.
Moreover, in the case \(m | m'\), we have that \(\F_{q^m}^\times\) is a subgroup of \(\F_{q^{m'}}^\times\).

Let \(\CF (q) \subset \F_q[z] \) denote the set of monic, nonconstant, irreducible polynomials over \(\F_q\), excluding \(z\) itself.
For each \(d \in \N\), let \(\CF_d (q) = \{f \in \CF (q) : \deg f = d\}\).
Let \(\sqcup\) denote disjoint union.
\begin{lemma}
\label{root_union}
For all \(d \in \N\) and prime powers \(q\),
\begin{equation}
\label{root_union_eq}
\F_{q^d}^\times
=
\bigsqcup_{c | d}
\bigsqcup_{f \in \CF_c (q)}
\{\alpha \in \F_{q^c}^\times : f(\alpha) = 0\}.
\end{equation}
\end{lemma}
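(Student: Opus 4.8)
The plan is to prove Lemma~\ref{root_union_eq} by the standard classification of elements of a finite field according to the degree of their minimal polynomial over \(\F_q\). First I would recall that every nonzero \(\alpha \in \F_{q^d}\) generates a subfield \(\F_q(\alpha) \subseteq \F_{q^d}\), and that this subfield is \(\F_{q^c}\) where \(c = [\F_q(\alpha):\F_q]\) is the degree of the minimal polynomial \(f_\alpha\) of \(\alpha\) over \(\F_q\). Since \(\F_{q^c} \subseteq \F_{q^d}\) forces \(c \mid d\), every \(\alpha \in \F_{q^d}^\times\) lies in exactly one of the sets \(\{\alpha \in \F_{q^c}^\times : f(\alpha) = 0\}\) appearing on the right-hand side, namely for \(c = \deg f_\alpha\) and \(f = f_\alpha\) (note \(f_\alpha \neq z\) since \(\alpha \neq 0\), so \(f_\alpha \in \CF_c(q)\)). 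This establishes that the right-hand side covers \(\F_{q^d}^\times\).

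Next I would check disjointness. Two distinct monic irreducible polynomials \(f \neq g\) over \(\F_q\) share no common root in any extension, because a common root would make \(\gcd(f,g)\) a nonconstant common factor, contradicting irreducibility and distinctness; this handles both the inner disjoint union (distinct \(f \in \CF_c(q)\) for fixed \(c\)) and the outer one (since \(f \in \CF_c(q)\) and \(g \in \CF_{c'}(q)\) with \(c \neq c'\) are certainly distinct polynomials). I should also note that if \(f \in \CF_c(q)\) then all its roots genuinely lie in \(\F_{q^c}^\times\) — they lie in \(\F_{q^c}\) because \(f\) splits there (the splitting field of an irreducible degree-\(c\) polynomial over \(\F_q\) is \(\F_{q^c}\)), and they are nonzero since \(f \neq z\) and \(f\) is irreducible, hence has nonzero constant term. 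Also each such root lies in \(\F_{q^d}\) since \(c \mid d\) gives \(\F_{q^c} \subseteq \F_{q^d}\), so the right-hand side is indeed contained in \(\F_{q^d}^\times\).

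Combining the two directions gives the claimed equality as a disjoint union. I do not anticipate a genuine obstacle here — the result is essentially a bookkeeping consequence of the subfield structure of \(\F_{q^d}\) and the fact that an element's minimal polynomial is the unique monic irreducible polynomial it satisfies. The only point requiring a little care is making sure the excluded polynomial \(z\) is correctly accounted for: working with unit groups \(\F_{q^c}^\times\) rather than the full fields is exactly what removes \(0\) from consideration, and \(0\) is precisely the unique root of \(z\), which is why \(\CF_c(q)\) excludes \(z\). I would state this explicitly to make the correspondence between the two sides clean.
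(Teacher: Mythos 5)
Your proof is correct and follows the same approach as the paper's, which simply notes that every element of \(\F_{q^d}^\times\) is the root of some polynomial in \(\CF(q)\) of degree dividing \(d\), and that distinct monic irreducibles over \(\F_q\) share no roots. You have merely spelled out the subfield and minimal-polynomial bookkeeping that the paper leaves implicit.
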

\begin{proof}
Every element of \(\F_{q^d}^\times\) is the root of an element of \(\CF (q)\) with degree dividing \(d\).
The union is disjoint because distinct monic, irreducible polynomials over \(\F_q\) do not have shared roots.
\end{proof}

For each \(n \in \N\), fix a generator \(\eps\) of the cyclic group \(\F_{q^{n!}}^\times\), and fix an injective group homomorphism \(\te : \F_{q^{n!}}^\times \to \C^\times\) mapping \(\eps \mapsto e^{2 \pi i / (q^{n!}-1)}\).
Note that we omit the dependence of \(\eps\) on \(n\).
Context will suffice.
For each \(d \in \{1,\ldots,n\}\), let \(\eps_d\) denote \(\eps\) raised to the power \( (q^{n!}-1) / (q^d-1)\).
The multiplicative order of \(\eps_d\) is \(q^d-1\), and \(\eps_d\) is a cyclic generator of \(\F_{q^d}^\times\).
Also, \(\te\) maps \(\F_{q^d}^\times\) isomorphically onto the group of \((q^d-1)^\text{th}\) roots of unity.

\begin{corollary}
\label{theta_of_root_union}
For all \(n \in \N\), \(d \in \{1,\ldots,n\}\), and prime powers \(q\),
\begin{equation}
\label{theta_of_root_union_eq}
\{\xi \in \C^\times : \xi^{q^d-1} = 1\}
=
\bigsqcup_{c | d}
\bigsqcup_{f \in \CF_c (q)}
\{\te(\alpha) : \, \alpha \in \F_{q^c}^\times, \, f(\alpha) = 0\}.
\end{equation}
\end{corollary}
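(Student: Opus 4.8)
The plan is to obtain \eqref{theta_of_root_union_eq} from \eqref{root_union_eq} simply by applying the map \(\te\), restricted to \(\F_{q^d}^\times\), to both sides. Recall that, as noted just before the statement, \(\te\) maps \(\F_{q^d}^\times\) isomorphically onto the group of \((q^d-1)^{\text{th}}\) roots of unity, which is exactly the set \(\{\xi \in \C^\times : \xi^{q^d-1} = 1\}\) on the left-hand side of \eqref{theta_of_root_union_eq}. So the content of the corollary is really just a translation of Lemma~\ref{root_union} across this isomorphism.

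First I would check that every set appearing on the right-hand side of \eqref{root_union_eq} lies in the domain of \(\te\). The outer union runs over divisors \(c\) of \(d\); since \(d \in \{1,\ldots,n\}\), every such \(c\) satisfies \(c \le d \le n\), hence \(c \mid n!\), and therefore \(\F_{q^c}^\times\) is a subgroup of \(\F_{q^{n!}}^\times\). Thus \(\te\) is defined on each block \(\{\alpha \in \F_{q^c}^\times : f(\alpha) = 0\}\), and its image is precisely \(\{\te(\alpha) : \alpha \in \F_{q^c}^\times,\ f(\alpha) = 0\}\).

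Next I would use injectivity of \(\te\): an injective function carries a disjoint union of sets to the disjoint union of their images. Applying \(\te\) term by term to \eqref{root_union_eq}, the right-hand side becomes the right-hand side of \eqref{theta_of_root_union_eq}, while the left-hand side, \(\F_{q^d}^\times\), is carried onto the group of \((q^d-1)^{\text{th}}\) roots of unity, i.e., the left-hand side of \eqref{theta_of_root_union_eq}. Equating the two images completes the proof.

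I do not anticipate any real obstacle here; the only point that needs a word of justification is that the indices \(c\) occurring in the union all satisfy \(c \le n\), so that \(\te\) is actually defined on the relevant roots, and this is immediate from \(c \mid d\) and \(d \le n\).
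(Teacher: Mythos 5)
Your proposal is correct and matches the paper's proof exactly: both apply \(\te\) to each side of \eqref{root_union_eq} and invoke the fact, noted just before the corollary, that \(\te\) carries \(\F_{q^d}^\times\) isomorphically onto the \((q^d-1)^{\text{th}}\) roots of unity. The extra remarks you make about \(\te\) being defined on the relevant roots and about injectivity preserving disjointness are sensible but are left implicit in the paper's one-line proof.
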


\begin{proof}
Apply \(\te\) to each element on the left and right sides of \eqref{root_union_eq}.
\end{proof}

For each \(d \in \N\), the Galois group of \(\F_{q^d}\) over \(\F_q\) is cyclic of order \(d\), generated by the field automorphism
\[
\F_{q^d} \to \F_{q^d}, \quad \alpha \mapsto \alpha^q.
\]
Therefore, for each \(f \in \CF_d (q)\), if \(\alpha\) is any root of \(f\), then
\(\alpha,\alpha^q,\alpha^{q^2},\ldots,\alpha^{q^{d-1}}\)
are distinct and are all the roots of \(f\).
Since \(\F_{q^d}^\times\) is generated by \(\eps_d\), there exists some \(\ell \in \Z\) such that \(\eps_d^\ell\) is a root of \(f\).
Assign to \(f\) an arbitrary integer \(\ell_f\) such that \(\eps_d^{\ell_f}\) is a root of \(f\).
To combine the previous three sentences,
\begin{equation}
\label{who_are_my_roots}
f(z) = \prod_{i=0}^{d-1}
\left (
z - \left ( \eps_d^{\ell_f} \right )^{q^i}
\right ).
\end{equation}
Observe that the choice of \(\ell_f\) is unique up to multiplication by powers of \(q\) and addition by multiples of \(q^d - 1\).
Our results are independent of the choice of \(\ell_f\).

In case we are considering a polynomial \(f\) with degree \(d\) dividing \(n\), we will also make use of the quantity \(\ell_f [n/d]_{q^d}\), viewing it as an element of \(\Z / (q^n-1)\).
More precisely, define the group isomorphism
\begin{equation}
\label{te_n_def}
\te_n : \F_{q^n}^\times \to \Z / (q^n-1)
\quad \text{by} \quad
\te_n \left ( \eps_n^{\ell} \right ) = \ell \mod q^n-1
\quad
\forall \ell \in \Z.
\end{equation}
It follows that
\(\te_n\) maps \(\eps_d^{\ell}\) to \(\ell [n/d]_{q^d}\).

\begin{corollary}[to~Lem.~\ref{root_union}]
\label{te_n_map_property}
For all \(n \in \N\), \(d | n\), and prime powers \(q\),
\begin{equation}
\{ m \cdot [n/d]_{q^d} : m \in \Z / (q^n-1) \}
=
\bigsqcup_{c | d}
\bigsqcup_{f \in \CF_c (q)}
\{\te_n(\alpha) : f(\alpha) = 0\}.
\end{equation}
\end{corollary}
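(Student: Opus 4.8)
The plan is to prove Corollary~\ref{te_n_map_property} by mimicking the proof of Corollary~\ref{theta_of_root_union}: apply the group isomorphism $\te_n$ of \eqref{te_n_def} to the disjoint-union decomposition of $\F_{q^d}^\times$ furnished by Lemma~\ref{root_union}. The only preliminary work is to recognize the left-hand side of the asserted identity as the image $\te_n(\F_{q^d}^\times)$.

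First I would observe that, since $d \mid n$, the $q^d$-analogue $[n/d]_{q^d} = 1 + q^d + \cdots + q^{(n/d - 1)d}$ equals $(q^n-1)/(q^d-1)$, and in particular divides $q^n - 1$; hence $\eps_d = \eps_n^{(q^n-1)/(q^d-1)} = \eps_n^{[n/d]_{q^d}}$, which is exactly the statement (recorded just after \eqref{te_n_def}) that $\te_n$ sends $\eps_d^{\,m}$ to $m \cdot [n/d]_{q^d}$ in $\Z/(q^n-1)$. Because $\eps_d$ generates the cyclic group $\F_{q^d}^\times$, letting $m$ run over all of $\Z/(q^n-1)$ produces every element of $\langle \eps_d \rangle = \F_{q^d}^\times$, so
\[
\{\, m \cdot [n/d]_{q^d} : m \in \Z/(q^n-1) \,\} = \te_n\!\left( \F_{q^d}^\times \right).
\]

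Next I would note that for each $c \mid d$ one has $c \mid d \mid n$, so $\F_{q^c}^\times \subseteq \F_{q^d}^\times \subseteq \F_{q^n}^\times$ and $\te_n$ is defined and injective on every root $\alpha$ occurring on the right-hand side of \eqref{root_union_eq}. Applying $\te_n$ termwise to \eqref{root_union_eq} then gives
\[
\te_n\!\left( \F_{q^d}^\times \right) = \bigsqcup_{c \mid d}\ \bigsqcup_{f \in \CF_c(q)} \{\, \te_n(\alpha) : f(\alpha) = 0 \,\},
\]
the union on the right staying disjoint because $\te_n$, being a bijection, carries disjoint sets to disjoint sets. Combining the two displays yields the corollary.

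I do not expect any genuine obstacle: the argument is the verbatim analogue of the proof of Corollary~\ref{theta_of_root_union} with $\te$ replaced by $\te_n$, and the sole point requiring a moment of care is the elementary identity $[n/d]_{q^d} = (q^n-1)/(q^d-1)$ valid for $d \mid n$, which is what makes $\te_n(\eps_d) = [n/d]_{q^d}$ and thereby pins down the left-hand side as $\te_n(\F_{q^d}^\times)$.
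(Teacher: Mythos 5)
Your proposal is correct and follows essentially the same route as the paper: apply the isomorphism $\te_n$ to both sides of \eqref{root_union_eq}, identifying the left-hand side of the corollary with $\te_n(\F_{q^d}^\times)$ via the fact that $\te_n(\eps_d^m) = m\cdot[n/d]_{q^d}$. You spell out the identification of the left-hand side in slightly more detail than the paper's one-line proof, but the substance is identical.
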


\begin{proof}
Apply \(\te_n\) to each element on the left and ride sides of \eqref{root_union_eq}, recalling that \(d | n\) and \(\F_{q^d}^\times\) is the unique subgroup of \(\F_{q^n}^\times\) of order \(q^d-1\).
\end{proof}

\begin{example}
Consider the case \(q = 3\), \(n = 4\), and \(\te : \eps \mapsto \zeta\), where
\[\zeta = e^{2 \pi i / (q^{n!}-1)}.\]
We write \(\F_3\) as \(\{0,1,2\}\) under addition and multiplication modulo \(3\).
We record in Table~\ref{root_table} the polynomials \(f \in \CF (q)\) with degree dividing \(n\), together with all possible choices for \(\ell_f\) modulo \(q^d-1\) and all possible choices for \(\ell_f [n / \deg f]_{q^{\deg f}}\) module \(q^n-1\).
For the sake of brevity, we omit most of the degree four polynomials.
Note that these values depend on the choice of \(\eps\).
\begin{table}[ht]
\caption{Choices for \(\ell_f\) and \(\ell_f [n/\deg f]_{q^{\deg f}}\) with \(q = 3\) and \(n = 4\).}
\centering
\begin{tabular}{r|c|c}
\(f\) & \(\ell_f\) & \(\ell_f [n/\deg f]_{q^{\deg f}}\) \\ \hline \hline
\(z + 2\) &                  \( 0 \) & \(0\) \\
\(z + 1\) &                  \( 1 \) & \(40\) \\
\hline
\(z^2 + 2z + 2\) &           \( 1, 3 \) & \( 10, 30 \) \\
\(z^2 + 1\) &                \( 2, 6 \) & \( 20, 60 \) \\
\(z^2 + z + 2\) &            \( 5, 7 \) & \( 50, 70 \) \\
\hline
\(z^4 + 2z^3 + 2\) &         \( 1,  3,  9, 27 \) & \( 1,  3,  9, 27 \) \\
\(z^4 + 2z^3 + z^2 + 1\) &   \( 2,  6, 18, 54 \) & \( 2,  6, 18, 54 \) \\
\(z^4 + z^3 + 2z + 1\) &     \( 4, 12, 28, 36 \) & \( 4, 12, 28, 36 \) \\
\vdots & \vdots & \vdots
\end{tabular}
\label{root_table}
\end{table}

We can also visualize the data from Table~\ref{root_table} in the complex plane as follows.
Observe that \(\te\) maps \(\eps_n\) to
\[
\xi = \zeta^{\frac{q^{n!}-1}{q^n-1}},
\]
a \((q^n-1)^\text{th} = 80^\text{th}\) root of unity.
Given a choice of \(\ell_f\) for some \(f\) in Table~\ref{root_table} with degree \(d | n\), we have
\[
\alpha
=
\eps_d^{\ell_f}
=
\eps_n^{\ell_f [n/d]_{q^d}}
\]
is a root of \(f\),
\[
\te (\alpha) = \xi^{\ell_f [n/d]_{q^d}} \in \C^\times,
\quad
\text{and}
\quad
\te_n (\alpha) = \ell_f [n/d]_{q^d} \in \Z / (q^n-1).
\]
Figure~\ref{root_picture} shows the complex plane and the images under \(\te\) of all the roots of all the polynomials \(f \in \CF_1 (3) \sqcup \CF_2 (3) \sqcup \CF_4 (3)\).
The images under \(\te\) of roots of polynomials in \(\CF_1 (3)\) are labeled by the largest nodes, those for \(\CF_2 (3)\) by the medium-sized nodes, and those for \(\CF_4 (3)\) by the smallest nodes.

One can observe in Figure~\ref{root_picture} the following instance of Cor.~\ref{theta_of_root_union}.
The images under \(\te\) of the roots of polynomials in \(\CF_1 (3) \sqcup \CF_2 (3)\) precisely form the set of \((q^2-1)^\text{th} = 8^\text{th}\) roots of unity, pictorially represented by the medium and large nodes.

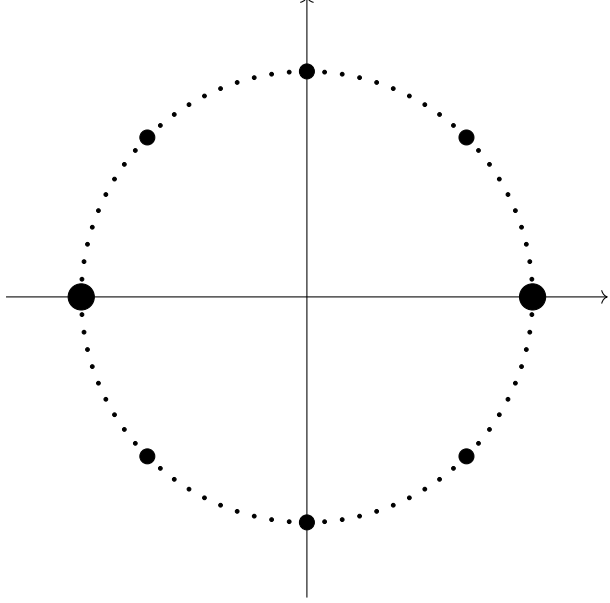
\begin{figure}
\caption{Images under \(\te\) of roots of polynomials from Table~\ref{root_table}}
\centering
\begin{tikzpicture}[
  dot4/.style={draw,fill,circle, inner sep = 0.5pt},
  dot2/.style={draw,fill,circle, inner sep = 2pt},
  dot1/.style={draw,fill,circle,inner sep = 3.5pt}
  ]
  \draw[->] (-4,0) -- (4,0);
  \draw[->] (0,-4) -- (0,4);

  \foreach \i in {0,40} {
    \node[dot1] at (\i*4.5:3) {} ;
  }
  \foreach \i in {10,20,30,50,60,70} {
    \node[dot2] at (\i*4.5:3) {} ;
  }
  \foreach \i in {1,2,3,4,5,6,7,8,9,11,12,13,14,15,16,17,18,19,
                  21,22,23,24,25,26,27,28,29,31,32,33,34,35,36,37,38,39,
                  41,42,43,44,45,46,47,48,49,51,52,53,54,55,56,57,58,59,
                  61,62,63,64,65,66,67,68,69,71,72,73,74,75,76,77,78,79} {
    \node[dot4] at (\i*4.5:3) {} ;
  }

\end{tikzpicture}
\label{root_picture}
\end{figure}

One can also observe the following instance of Cor.~\ref{te_n_map_property} in either Table~\ref{root_table} or Fig.~\ref{root_picture}.
The images under \(\te_n\) of the roots of polynomials in \(\CF_1 (3) \sqcup \CF_2 (3)\) are \(0, 10, 20, \ldots, 70\).
These are precisely the multiples of \((q^n-1)/(q^2-1) = 10\) in \(\Z / (q^n-1)\).
\end{example}

\subsection{The finite general linear groups}

The finite general linear group \(\GL_n \F_q\) is the group of \(n \times n\) invertible matrices with entries in the finite field \(\F_q\) with \(q\) elements.
We will occasionally have need to view the elements of \(\GL_n \F_q\) abstractly as linear transformations on an \(n\)-dimensional \(\F_q\)-vector space.
The cardinality of \(\GL_n \F_q\) is \(\gamma_n(q)\) as defined in Table~\ref{formula_table} \cite[Prop.~1.10.1]{EC1}.

\subsubsection{Indexing the \(\GL_n \F_q\) conjugacy classes} \label{indexing}

We first discuss how to index the conjugacy classes and irreducible characters of $\GL_n \F_q$.
Let $V = \F_q^n$.
Then $\GL_n \F_q$ acts on $V$ via matrix multiplication.

Consider a fixed $g \in \GL_n \F_q$.
Let \(V_g\) denote the vector space $V$ endowed with an $\F_q [z]$-module structure by defining the action $\F_q [z] \times V_g \to V_g$ to be $(f(z), v) \mapsto f(g)(v)$.
The polynomial ring $\F_q[z]$ is a principal ideal domain, and $V$ is finite-dimensional as an $\F_q$-vector space, hence $V_g$ is finitely generated as an $\F_q[z]$-module.
By the structure theorem for finitely generated modules over principal ideal domains \cite[Thm.~12.1.6]{DF}, there exists a unique function $\ulambda^g : \CF (q) \to \Par$ such that
\begin{equation} \label{rcf}
V_g \isom \bigoplus_{f \in \CF (q)} \bigoplus_{i \geq 1} \F_q [z] \left / \left ( f(z)^{\ulambda^g (f)_i} \right ) \right .
\end{equation}
as $\F_q [z]$-modules,
where $\ulambda^g (f)_i$ denotes the $i^\text{th}$ part of $\ulambda ^g (f)$.
We say that \(g\) \myemph{determines the isomorphism} \eqref{rcf}.
Moreover, \(g_1\) and \(g_2\) are conjugate in \(\GL_n \F_q\) if and only if \(\ulambda^{g_1} = \ulambda^{g_2}\).
If \(g\) is clear from context, we omit the superscript from \(\ulambda^g\).
The function $\ulambda : \CF (q) \to \Par$ is said to \myemph{index} the conjugacy class of $g \in \GL_n \F_q$,
and we denote this conjugacy class by \(\CC_\ulambda\).

Define the \myemph{norm} of an index \(\ulambda : \CF (q) \to \Par\) to be
\begin{equation}
\| \ulambda \| = \sum_{f \in \CF (q)} |\ulambda (f)| \cdot \deg f.
\end{equation}
Computing dimensions of each side in the isomorphism given in \eqref{rcf} implies the equation $n = \| \ulambda \|$.
Therefore, to each conjugacy class $C \subset \GL_n \F_q$, we can associate a unique index $\ulambda$ with $n = \| \ulambda \|$ such that \(C = \CC_\ulambda\).
In \cite{green}, Green shows that the condition $n = \| \ulambda \|$ is necessary and sufficient for $\ulambda$ to be the index of some conjugacy class in $\GL_n \F_q$.
Thus, conversely, to every index $\ulambda$ with $\| \ulambda \| = n$, there exists a unique conjugacy class of $\GL_n \F_q$ with index $\ulambda$.

Given $\ulambda$, one can read off the characteristic and minimal polynomials of $g$ as follows.
The minimal polynomial is
$\prod_{f \in \CF (q)} f^{\ulambda(f)_1}$,
and the characteristic polynomial is
$\prod_{f \in \CF (q)} f^{|\ulambda (f) |}$.
Moreover, we can use the isomorphism \eqref{rcf} to write down a specific matrix whose conjugacy class is indexed by $\ulambda$ as follows.
If $h(z) = z^n - a_{n-1}z^{n-1} - \cdots - a_1 z - a_0 \in \F_q[z]$, then the \myemph{companion matrix} of $h$ is defined by
$$
A(h) = \begin{bmatrix}
0 & 0 & \cdots & 0 & a_0    \\
1 & 0 & \cdots & 0 & a_1    \\
0 & 1 & \cdots & 0 & a_2    \\
\vdots  &   & \ddots &   & \vdots \\
0 & 0 & \cdots & 1 & a_{n-1}
\end{bmatrix},
$$
where $A(1)$ is the empty matrix.
Given $g \in \GL_n \F_q$ which determines the isomorphism \eqref{rcf}, $g$ is in the same conjugacy class as any block-diagonal matrix
whose diagonal blocks are those nonempty matrices $A(f^{\ulambda(f)_i})$ for $f \in \CF (q)$ and $i \geq 1$.
Any block-diagonal matrix with these diagonal blocks arranged in any order of non-increasing size from the upper-left corner to the lower-right corner is said to be a \myemph{rational canonical form} of $g$.
Furthermore, if $g$ itself is in this form, say that \myemph{$g$ is in rational canonical form}.

\begin{example}
Consider the matrix
$$ g = \begin{bmatrix}
0 & 1 & 0 \\
1 & 1 & 0 \\
0 & 0 & 1
\end{bmatrix} \in \GL_3 \F_2. $$
This matrix is block-diagonal, with diagonal blocks of size $2$ and $1$. The blocks are the companion matrices of the polynomials $z^2+z+1$ and $z+1 \in \F_2[z]$, respectively.
Thus, $g$ is in rational canonical form. The conjugacy class of $g$ has index $\ulambda$ defined by
$$ \ulambda (f) = \begin{cases}
(1) & \text{if } f = z^2 + z + 1, \\
(1) & \text{if } f = z+1, \\
\emptyset & \text{otherwise.}
\end{cases} $$
The characteristic polynomial of $g$ is $(z+1)(z^2+z+1) = z^3+1 \in \F_2 [z]$, which is also its minimal polynomial.
\end{example}

Define the \myemph{support} of an index $\ulambda$ by $\supp \ulambda = \{f \in \CF (q) : \ulambda (f) \neq \emptyset\}$. Observe $\| \ulambda \| < \infty$ implies $\# \supp \ulambda  < \infty$. Call an index $\ulambda$ \myemph{primary} if $\# \supp \ulambda = 1$.
If $\ulambda$ is primary with $\supp \ulambda = \{f\}$ and $\ulambda (f) = \lambda$, then we denote $\ulambda$ simply by $f \mapsto \lambda$. For example, $z-1 \mapsto (1^n)$ is the index for the identity matrix of $\GL_n \F_q$.
We refer to a conjugacy class itself as primary if its index is primary, and we refer to an element as primary if it is a member of a primary conjugacy class.

The following result allows us to compute the sizes of conjugacy classes in \(\GL_n \F_q\).
Recall that, for \(\mu \vdash n\) and \(i \in \N\), we have defined \(s_i(\mu) = \sum_{j=1}^i \mu_j\).

\begin{theorem}[{\cite[Thm.~1.10.7]{EC1}}]
\label{cc_sizes}
For all \(n \in \N\), prime powers \(q\), and \(\ulambda : \CF (q) \to \Par\) with \(\|\ulambda \| = n\), we have
\begin{equation}
\# \CC_\ulambda
=
\frac{\gamma_n(q)}{
\prod_{f \in \CF (q)}
\prod_{i \geq 1}
\prod_{j = 1}^{m_i(\ulambda(f))}
\left (
(q^{\deg f})^{s_i(\ulambda(f)')}
-
(q^{\deg f})^{s_i(\ulambda(f)') - j}
\right ) }.
\end{equation}
\end{theorem}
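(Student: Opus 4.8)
The plan is to combine the orbit--stabilizer theorem with the \(\F_q[z]\)-module description of conjugacy classes set up around \eqref{rcf}. Fix a representative \(g \in \CC_\ulambda\). Since \(\# \GL_n \F_q = \gamma_n(q)\), it suffices to show that the centralizer \(Z(g)\) of \(g\) in \(\GL_n \F_q\) has order equal to the product in the denominator of the claimed formula. A matrix \(h\) commutes with \(g\) exactly when \(h\), viewed as an \(\F_q\)-linear endomorphism of \(V = \F_q^n\), commutes with the action of \(z\) on \(V_g\), i.e.\ when \(h\) is an \(\F_q[z]\)-module endomorphism of \(V_g\); since the inverse of an invertible module endomorphism is again a module endomorphism, \(Z(g) = \mathrm{Aut}_{\F_q[z]}(V_g)\), the unit group of \(\mathrm{End}_{\F_q[z]}(V_g)\).

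Next I would reduce to a single irreducible factor. There are no nonzero \(\F_q[z]\)-module homomorphisms between modules annihilated by powers of distinct irreducibles (their image is annihilated by a unit), so the decomposition \eqref{rcf} forces every endomorphism to preserve each primary summand, giving \(\mathrm{End}_{\F_q[z]}(V_g) \isom \prod_{f \in \supp \ulambda} \mathrm{End}_{\F_q[z]}(V_{g,f})\), where \(V_{g,f} = \bigoplus_{i \geq 1} \F_q[z]/\bigl(f^{\ulambda(f)_i}\bigr)\), and hence \(\# Z(g) = \prod_{f \in \supp \ulambda} \# \mathrm{Aut}_{\F_q[z]}(V_{g,f})\). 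Fix \(f \in \CF(q)\) of degree \(d\), write \(\lambda = \ulambda(f)\), and let \(\mathfrak q = q^d\) be the size of the residue field \(\F_q[z]/(f) \isom \F_{q^d}\); since \(V_{g,f}\) is \(f\)-primary torsion, its automorphisms may be computed over the discrete valuation ring \(\F_q[z]_{(f)}\) with uniformizer \(f\), so the task is the classical problem of counting automorphisms of a finite torsion module of type \(\lambda\) over a DVR with residue field of size \(\mathfrak q\).

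The crux is the formula
\[
\# \mathrm{Aut}_{\F_q[z]}(V_{g,f}) = \mathfrak q^{\sum_{i \geq 1} (\lambda_i')^2} \prod_{i \geq 1} \prod_{k=1}^{m_i(\lambda)} \bigl( 1 - \mathfrak q^{-k} \bigr).
\]
I would prove it by reduction modulo \(f\): from \(\# \mathrm{Hom}_{\F_q[z]}\bigl(\F_q[z]/(f^a), \F_q[z]/(f^b)\bigr) = \mathfrak q^{\min(a,b)}\) one gets \(\# \mathrm{End}_{\F_q[z]}(V_{g,f}) = \mathfrak q^{\sum_{i,j} \min(\lambda_i,\lambda_j)} = \mathfrak q^{\sum_i (\lambda_i')^2}\); a self-map of the finite module \(V_{g,f}\) is an automorphism iff it is surjective iff (Nakayama) its reduction on \(V_{g,f}/fV_{g,f}\) is invertible; and the reduction map \(\mathrm{End}_{\F_q[z]}(V_{g,f}) \to \mathrm{End}_{\F_{q^d}}(V_{g,f}/fV_{g,f})\) surjects onto the matrices that are block-triangular for the flag determined by the part sizes of \(\lambda\), with kernel of order an explicit power of \(\mathfrak q\). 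Counting the invertible elements of the image as an explicit power of \(\mathfrak q\) times \(\prod_{i \geq 1} \# \GL_{m_i(\lambda)} \F_{q^d}\), using \(\# \GL_m \F_{q^d} = \prod_{k=1}^m (\mathfrak q^m - \mathfrak q^{m-k})\), and collecting powers of \(\mathfrak q\) yields the displayed formula. (Alternatively one can induct on \(\ell(\lambda)\), adjoining one cyclic summand of maximal part size at a time and tracking the resulting short exact sequence of automorphism groups, trading the matrix bookkeeping for an exact-sequence argument.) Finally, writing \(\mathfrak q^{s_i(\lambda')} - \mathfrak q^{s_i(\lambda')-k} = \mathfrak q^{s_i(\lambda')}\bigl(1 - \mathfrak q^{-k}\bigr)\) and using the summation-by-parts identity \(\sum_{i \geq 1} m_i(\lambda)\, s_i(\lambda') = \sum_{i \geq 1} (\lambda_i')^2\) — which follows from \(m_i(\lambda) = \lambda_i' - \lambda_{i+1}'\) by Abel summation — identifies \(\# \mathrm{Aut}_{\F_q[z]}(V_{g,f})\) with \(\prod_{i \geq 1} \prod_{k=1}^{m_i(\lambda)} \bigl( (q^{\deg f})^{s_i(\lambda')} - (q^{\deg f})^{s_i(\lambda') - k} \bigr)\). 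Taking the product over all \(f \in \CF(q)\) (the factors for \(f \notin \supp \ulambda\) being empty) and dividing \(\gamma_n(q)\) by it gives the theorem. The main obstacle is the primary automorphism count: determining the image and kernel of reduction modulo \(f\) precisely and carrying out the exponent arithmetic; everything else is formal.
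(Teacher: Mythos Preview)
Your argument is correct and is essentially the standard proof of this classical result (orbit--stabilizer, primary decomposition, and the automorphism count for a finite torsion module over a DVR via reduction modulo the uniformizer). The block-triangular description of the image of reduction, the Nakayama step, and the Abel-summation identity \(\sum_i m_i(\lambda)\,s_i(\lambda') = \sum_i (\lambda_i')^2\) all check out.

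However, there is nothing to compare to: the paper does not prove this theorem at all. It is quoted as \cite[Thm.~1.10.7]{EC1} and used as a black box (for instance in Corollary~\ref{rss_cc_and_cycle_type_sizes} and in Example~\ref{conj_class_example}). So your write-up is not an alternative to the paper's proof but rather a self-contained justification of a result the paper simply cites.
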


\begin{example}
\label{conj_class_example}
Consider $\GL_3 \F_2$.
The degree 1, 2, and 3 polynomials in $\CF (2)$ are
$f_1 = z+1, f_2 = z^2+z+1, f_3 = z^3+z^2+1$, and $\tilde{f}_3 = z^3+z+1$.
There are six functions $\ulambda : \CF (2) \to \Par$ satisfying $\| \ulambda \| = 3$, which index the conjugacy classes and irreducible characters of $\GL_3 \F_2$. The primary ones are
$$
f_1 \mapsto (1,1,1), \,
f_1 \mapsto (2,1), \,
f_1 \mapsto (3), \,
f_3 \mapsto (1), \,
\text{ and } \,
\tilde{f}_3 \mapsto (1).
$$
There is only one more left to define. We call it $\ulambda_0$. It is defined by
$$ \ulambda_0 (f) =
\begin{cases}
(1) & \text{if } f = f_1, \\
(1) & \text{if } f = f_2, \\
\emptyset & \text{otherwise.}
\end{cases}
$$
We now name all of the conjugacy classes, indicate a member in rational canonical form, and indicate what function $\CF (2) \to \Par$ indexes the class.
\begin{align*}
\text{The conjugacy class } U_{1} \text{ of }
\begin{bsmallmatrix}
1 & 0 & 0 \\
0 & 1 & 0 \\
0 & 0 & 1
\end{bsmallmatrix}
&
\text{ is indexed by }
f_1 \mapsto (1,1,1). \\
\text{The conjugacy class } U_{2} \text{ of }
\begin{bsmallmatrix}
0 & 1 & 0 \\
1 & 0 & 0 \\
0 & 0 & 1
\end{bsmallmatrix}
&
\text{ is indexed by }
f_1 \mapsto (2,1). \\
\text{The conjugacy class } U_{3} \text{ of }
\begin{bsmallmatrix}
0 & 0 & 1 \\
1 & 0 & 1 \\
0 & 1 & 1
\end{bsmallmatrix}
&
\text{ is indexed by }
f_1 \mapsto (3). \\
\text{The conjugacy class } E \text{ of }
\begin{bsmallmatrix}
0 & 0 & 1 \\
1 & 0 & 0 \\
0 & 1 & 1
\end{bsmallmatrix}
&
\text{ is indexed by }
f_3 \mapsto (1). \\
\text{The conjugacy class } \tilde{E} \text{ of }
\begin{bsmallmatrix}
0 & 0 & 1 \\
1 & 0 & 1 \\
0 & 1 & 0
\end{bsmallmatrix}
&
\text{ is indexed by }
\tilde{f}_3 \mapsto (1). \\
\text{The conjugacy class } C_{0} \text{ of }
\begin{bsmallmatrix}
0 & 1 & 0 \\
1 & 1 & 0 \\
0 & 0 & 1
\end{bsmallmatrix}
&
\text{ is indexed by }
\ulambda_0. \\
\end{align*}
We chose these names for the following reasons. The \textbf{u}nipotent classes are $U_1, U_2$, and $U_3$. The regular \textbf{e}lliptic classes are $E$ and $\tilde{E}$. The \textbf{o}dd \textbf{o}ne \textbf{o}ut is $C_0$.
As an example of Theorem~\ref{cc_sizes}, we compute the cardinality of \(U_2\).
Recall that the index for \(U_2\) is primary with support \(\{f_1\}\).
Furthermore, the image of \(f_1\) is \((2,1) = (2,1)' \vdash 3\), and \(m_1((2,1)) = m_2((2,1)) = 1\).
By Theorem~\ref{cc_sizes},
\begin{equation}
\# U_2
=
\frac{\gamma_3(2)}{
\prod_{i = 1}^2
\left (
2^{s_i((2,1))}
-
2^{s_i((2,1)) - 1}
\right )
}
=
\frac{2^{\binom{3}{2}} [3]_2 !}{(2^2 - 2)(2^3 - 2^2)}
=
21.
\end{equation}

\end{example}

\subsubsection{Cycle type, regular semisimple elements, and regular elliptic elements}
\label{cycle_rs_re}

Recall the definition of \myemph{cycle type} for \(\GL_n \F_q\) from the introduction.
The definition given in the introduction is equivalent to the following.
For any matrix \(g \in \GL_n \F_q\), \(\type (g) = \mu\) if and only if
\[
m_i (\mu)
=
\sum_{f \in \CF_i (q)}
| \ulambda^g (f) |
\]
for each \(i \in \{1,\ldots,n\}\).
Recall that we define, for \(\mu \vdash n\) and \(q\) a prime power,
\begin{equation}
\label{type_set}
\CT_\mu (q) = \{g \in \GL_n \F_q : \type (g) = \mu \}.
\end{equation}
Since conjugate matrices have the same characteristic polynomial, each $\CT_\mu (q)$ is a union of conjugacy classes, and $\{\CT_\mu (q): \mu \vdash n\}$ forms a partition of $\GL_n \F_q$.
\begin{example}
Using the notation from Ex.~\ref{conj_class_example} above,
\(\CT_{(1,1,1)} (2) = U_1 \cup U_2 \cup U_3\),
\(\CT_{(2,1)} (2) = C_0\), and
\(\CT_{(3)} (2) = E \cup \tilde{E}\).
\end{example}

We now discuss a special class of matrices in $\GL_n \F_q$, the \myemph{regular semisimple} elements.
An element of an algebraic group is called \myemph{regular} if the dimension of its centralizer is equal to the dimension of a maximal torus of the group. A matrix in $\GL_n \F_q$ is called \myemph{semisimple} if it is diagonalizable over an algebraic closure of $\F_q$. A matrix in $\GL_n \F_q$ is called \myemph{regular semisimple} if it is both regular and semisimple.
See Lehrer's work \cite{lehrer} for a discussion on the regular semisimple variety in algebraic groups in both characteristic zero and positive characteristic. In particular, Lehrer gives a formula \cite[Cor.~8.5]{lehrer} enumerating the regular semisimple elements in \(\GL_n \F_q\).
Fulman gave the following combinatorial characterization of the regular semisimple elements of $\GL_n \F_q$, which also explains our choice of the notation \(\CT^\Box_\mu (q)\).
We will take Fulman's characterization as the definition of regular semisimple elements in this paper.

\begin{theorem}[Fulman \cite{fulman}]
\label{fulmans_characterization}
For all \(n \in \N\) and prime powers \(q\), a matrix $g \in \GL_n \F_q$ is regular semisimple if and only if $\ulambda^g (f) \in \{\emptyset, \Box\}$ for all $f \in \CF (q)$.
\end{theorem}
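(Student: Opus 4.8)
The plan is to translate each of the adjectives \emph{semisimple} and \emph{regular} separately into a condition on the partitions $\ulambda^g(f)$ via the rational-canonical decomposition \eqref{rcf}, and then intersect the two conditions. Write $\ulambda = \ulambda^g$, and recall from the discussion after \eqref{rcf} that the minimal polynomial of $g$ is $\prod_{f \in \CF(q)} f^{\ulambda(f)_1}$ and its characteristic polynomial is $\prod_{f \in \CF(q)} f^{|\ulambda(f)|}$. For semisimplicity: $g$ is diagonalizable over $\overline{\F_q}$ iff its minimal polynomial has no repeated root there; since $\F_q$ is perfect, every irreducible in $\F_q[z]$ is separable and distinct irreducibles have disjoint root sets, so $\prod_f f^{\ulambda(f)_1}$ has a repeated root precisely when some $\ulambda(f)_1 \ge 2$. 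Hence $g$ is semisimple iff $\ulambda(f)_1 \le 1$ for all $f$, i.e., iff every $\ulambda(f)$ is a column $(1^{m})$ for some $m \ge 0$.

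For regularity: by the definition quoted in the text, together with the fact that a maximal torus of $\GL_n$ has dimension $n$, the matrix $g$ is regular iff $\dim C_{\GL_n}(g) = n$, where $C_{\GL_n}(g)$ denotes the centralizer in the algebraic group. I would compute this dimension from \eqref{rcf}. The centralizer of $g$ in the matrix algebra $M_n(\F_q)$ is precisely $\mathrm{End}_{\F_q[z]}(V_g)$, a linear subspace cut out over $\F_q$ by the equations $xg = gx$; regarded as a variety $Z$ it is an affine space, and $C_{\GL_n}(g) = \{x \in Z : \det x \ne 0\}$ is a nonempty open subvariety of $Z$, hence dense and of dimension $\dim Z = \dim_{\F_q}\mathrm{End}_{\F_q[z]}(V_g)$ (the last equality because the defining equations have coefficients in $\F_q$). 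Alternatively one could read off $\dim C_{\GL_n}(g)$ as the degree in $q$ of the centralizer order $\#C_{\GL_n \F_q}(g) = \gamma_n(q)/\#\CC_\ulambda$, evaluated via Theorem~\ref{cc_sizes}, using that $\GL_n$-centralizers are connected. Since $\mathrm{Hom}_{\F_q[z]}\bigl(\F_q[z]/(f^a),\F_q[z]/(f^b)\bigr) \isom \F_q[z]/(f^{\min(a,b)})$ and there are no nonzero homomorphisms between blocks attached to distinct $f$, this gives
\[
\dim C_{\GL_n}(g) = \sum_{f \in \CF(q)} \deg f \sum_{i,j \ge 1} \min\bigl(\ulambda(f)_i, \ulambda(f)_j\bigr).
\]
The diagonal terms alone contribute $\sum_{f} \deg f \, |\ulambda(f)| = \|\ulambda\| = n$, while the off-diagonal terms are nonnegative; hence $\dim C_{\GL_n}(g) \ge n$, with equality exactly when, for each $f$, at most one part of $\ulambda(f)$ is nonzero. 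Therefore $g$ is regular iff $\ell(\ulambda(f)) \le 1$ for every $f$ (equivalently, iff the minimal and characteristic polynomials of $g$ coincide).

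Finally I would combine the two descriptions: $g$ is regular semisimple iff every $\ulambda(f)$ is simultaneously a column and has at most one part, and the only partitions with both properties are $\emptyset$ and $(1) = \Box$, which is the asserted characterization. The step I expect to be the main obstacle is the dimension count in the regularity paragraph — identifying $\dim C_{\GL_n}(g)$ with $\dim_{\F_q}\mathrm{End}_{\F_q[z]}(V_g)$ and then evaluating the latter through homomorphism spaces of cyclic $\F_q[z]$-modules; both ingredients are standard but deserve to be written out carefully. The semisimplicity step is routine once perfectness of $\F_q$ is invoked, and the final intersection is immediate.
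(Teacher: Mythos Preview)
Your argument is correct. The paper does not supply its own proof of this statement: it is quoted as a result of Fulman and then adopted as the working definition of regular semisimple, so there is nothing to compare against on the paper's side.

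Your route---translating ``semisimple'' into the condition $\ulambda(f)_1 \le 1$ via separability of the minimal polynomial over the perfect field $\F_q$, translating ``regular'' into $\ell(\ulambda(f)) \le 1$ via the centralizer-dimension count $\dim C_{\GL_n}(g) = \sum_f \deg f \sum_{i,j} \min(\ulambda(f)_i,\ulambda(f)_j)$, and then intersecting---is the standard one and each step is sound. The identification of $\dim C_{\GL_n}(g)$ with $\dim_{\F_q}\mathrm{End}_{\F_q[z]}(V_g)$ is justified exactly as you say (the centralizer in $M_n$ is a linear subspace defined over $\F_q$, and the invertible locus is open and nonempty in that irreducible affine space), and the Hom computation between cyclic $\F_q[z]$-modules is correct. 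The alternative you mention, reading off the centralizer dimension as the $q$-degree of $\gamma_n(q)/\#\CC_\ulambda$ from Theorem~\ref{cc_sizes}, would also work and stays closer to the explicit formulas already present in the paper.
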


\begin{corollary}
\label{rs_rcf_corollary}
Suppose \(n \in \N\), \(q\) is a prime power, \(g \in \GL_n \F_q\) is regular semisimple and \(h_1,\ldots,h_\ell \in \CF (q)\) are the distinct irreducible factors of the characteristic polynomial of \(g\).
Then \(g\) determines the isomorphism
\begin{equation}
\label{rs_rcf}
V_g \cong
{\F_q[z]} / {(h_1(z))}
\oplus
\cdots
\oplus
{\F_q[z]} / {(h_{\ell} (z))}.
\end{equation}
\end{corollary}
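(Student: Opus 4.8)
The plan is to unwind the structure-theorem decomposition \eqref{rcf} using Fulman's characterization of regular semisimple elements. First I would recall that $g$ determines the isomorphism \eqref{rcf}, that is,
\[
V_g \cong \bigoplus_{f \in \CF(q)} \bigoplus_{i \geq 1} \F_q[z]/\bigl(f(z)^{\ulambda^g(f)_i}\bigr).
\]
Since $g$ is regular semisimple, Theorem~\ref{fulmans_characterization} gives $\ulambda^g(f) \in \{\emptyset, \Box\}$ for every $f \in \CF(q)$. When $\ulambda^g(f) = \emptyset$ the inner sum over $i$ contributes no nonzero summands, and when $\ulambda^g(f) = \Box = (1)$ it contributes exactly one summand, namely $\F_q[z]/(f(z))$. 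Hence \eqref{rcf} collapses to
\[
V_g \cong \bigoplus_{f \in S} \F_q[z]/\bigl(f(z)\bigr), \qquad S := \{ f \in \CF(q) : \ulambda^g(f) = \Box \}.
\]

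The remaining step is to identify $S$ with $\{h_1, \ldots, h_\ell\}$. Recall from Section~\ref{indexing} that the characteristic polynomial of $g$ is $\prod_{f \in \CF(q)} f^{|\ulambda^g(f)|}$; under the regular semisimple hypothesis $|\ulambda^g(f)| \in \{0,1\}$, so this product equals $\prod_{f \in S} f$, a squarefree polynomial whose distinct monic irreducible factors are precisely the members of $S$. By definition, $h_1,\ldots,h_\ell$ are exactly those factors, so $S = \{h_1,\ldots,h_\ell\}$ and in particular $\ell = \#S$. Substituting into the displayed isomorphism yields \eqref{rs_rcf}.

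I do not anticipate a genuine obstacle here: the corollary is essentially a translation of Fulman's characterization into the language of rational canonical forms. The only point needing a little care is that the indexing sets match exactly --- that the $f$ with $\ulambda^g(f) = \Box$ are neither more nor fewer than the irreducible factors $h_i$ --- and this is immediate from the squarefreeness of the characteristic polynomial in the regular semisimple case.
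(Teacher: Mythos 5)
Your proof is correct and matches the argument the paper leaves implicit: the paper states this corollary without proof immediately after Theorem~\ref{fulmans_characterization}, treating it as a direct translation of that characterization into the decomposition \eqref{rcf}, which is exactly what you carry out. The step identifying the support set with $\{h_1,\dots,h_\ell\}$ via the characteristic polynomial formula from Section~\ref{indexing} is the right way to close the gap.
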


Recall that we define, for \(\mu \vdash n\) and \(q\) a prime power,
\begin{equation}
\CT^\Box_\mu (q)
=
\{g \in \CT_\mu (q) : g \text{ is regular semisimple}\}.
\end{equation}
The set $\{ \CT_\mu^\Box (q) : \mu \vdash n \}$ is not a partition of $\GL_n \F_q$ in general because not all matrices in $\GL_n \F_q$ are regular semisimple.
However each $\CT^\Box_\mu (q)$ is still a union of conjugacy classes, and the set $\{\CT^\Box_\mu (q) : \mu \vdash n\}$ at least partitions the set of regular semisimple elements in $\GL_n \F_q$.

\begin{example}
Using the notation from Ex.~\ref{conj_class_example} above,
\(\CT^\Box_{(1,1,1)} (2) \) is empty,
\(\CT^\Box_{(2,1)} (2) = C_0\), and
\(\CT^\Box_{(3)} (2) = E \cup \tilde{E}\).
\end{example}

Recall that Corollary~\ref{dense} states that, for large \(q\), the set \(\CT^\Box_\mu (q)\) comprises approximately \(1 / z_\mu\) of \(\GL_n \F_q\).
We can also derive an explicit formula for \(\# \CT_\mu^\Box (q)\) by combining Theorems~\ref{cc_sizes} and \ref{fulmans_characterization}.
As mentioned by Green in \cite{green}, we have
\begin{equation}
\label{size_of_Fm}
\# \CF_m (q) = \frac{1}{m} \sum_{s | m} \moebius(m/s)(q^s-1)
\end{equation}
for all \(m \geq 1\) and prime powers \(q\),
a result originally due to Gauss in the case that \(q\) is prime \cite{gauss}.

\begin{corollary}
\label{rss_cc_and_cycle_type_sizes}
Suppose \(n \in \N\), \(\mu \vdash n\), and \(q\) is a prime power.
Then \(\CT_\mu^\Box (q)\) is a union of conjugacy classes, each with cardinality
\[
\frac{\gamma_n(q)}{\prod_{i=1}^{\ell(\mu)} (q^{\mu_i}-1)}.
\]
Therefore,
\[
\# \CT_\mu^\Box (q)
=
\frac{\gamma_n(q)}{\prod_{i=1}^{\ell(\mu)} (q^{\mu_i}-1)}
\cdot
\prod_{i \geq 1}
\binom{\# \CF_i (q)}{m_i(\mu)}.
\]
\end{corollary}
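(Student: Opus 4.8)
The plan is to combine Fulman's characterization (Theorem~\ref{fulmans_characterization}) with the conjugacy-class size formula (Theorem~\ref{cc_sizes}), and then count classes. First I would identify which conjugacy classes make up \(\CT_\mu^\Box (q)\). Since being regular semisimple and having a fixed characteristic polynomial are both conjugacy-invariant, \(\CT_\mu^\Box (q)\) is a union of conjugacy classes, as already noted. By Theorem~\ref{fulmans_characterization}, a class \(\CC_\ulambda\) consists of regular semisimple elements exactly when \(\ulambda(f) \in \{\emptyset, \Box\}\) for every \(f \in \CF(q)\); and by the description of cycle type in Section~\ref{cycle_rs_re} (together with Corollary~\ref{rs_rcf_corollary}), such a class lies in \(\CT_\mu^\Box (q)\) exactly when \(\#\{f \in \CF_i(q) : \ulambda(f) = \Box\} = m_i(\mu)\) for every \(i \geq 1\). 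Equivalently, such a class is determined by choosing, for each \(i\), an \(m_i(\mu)\)-element subset \(S_i \subseteq \CF_i(q)\) — namely the set of degree-\(i\) irreducible factors of the characteristic polynomial — where \(\ulambda\) sends each \(f \in \bigcup_i S_i\) to \(\Box\) and every other polynomial to \(\emptyset\). Hence the number of conjugacy classes contained in \(\CT_\mu^\Box (q)\) is \(\prod_{i \geq 1} \binom{\# \CF_i(q)}{m_i(\mu)}\), with the convention that this equals \(0\) (and \(\CT_\mu^\Box (q) = \emptyset\)) whenever \(\# \CF_i(q) < m_i(\mu)\) for some \(i\).

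Next I would compute the size of one such class \(\CC_\ulambda\) via Theorem~\ref{cc_sizes}. For any \(f\) with \(\ulambda(f) = \Box = (1)\), we have \(\ulambda(f)' = (1)\), so \(m_1(\ulambda(f)) = 1\), \(m_j(\ulambda(f)) = 0\) for \(j \geq 2\), and \(s_1(\ulambda(f)') = 1\); the triple product over \(i\) and \(j\) then contributes the single factor \((q^{\deg f})^{1} - (q^{\deg f})^{1-1} = q^{\deg f} - 1\), while polynomials with \(\ulambda(f) = \emptyset\) contribute nothing. As \(f\) ranges over \(\supp \ulambda\), the degrees \(\deg f\) are, with multiplicity, exactly the parts \(\mu_1, \ldots, \mu_{\ell(\mu)}\) of \(\mu\). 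Therefore the denominator in Theorem~\ref{cc_sizes} is \(\prod_{i=1}^{\ell(\mu)} (q^{\mu_i} - 1)\), giving \(\# \CC_\ulambda = \gamma_n(q) \big/ \prod_{i=1}^{\ell(\mu)} (q^{\mu_i} - 1)\); notably this depends only on \(\mu\), not on which class we chose.

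Finally, summing \(\# \CC_\ulambda\) over the \(\prod_{i \geq 1} \binom{\# \CF_i(q)}{m_i(\mu)}\) classes comprising \(\CT_\mu^\Box (q)\) yields the claimed formula for \(\# \CT_\mu^\Box (q)\). I do not anticipate a genuine obstacle: the proof is essentially bookkeeping. The only points requiring care are translating the structure-theorem index \(\ulambda\) into the subset data \(S_i\) (and checking this correspondence is a bijection onto the relevant classes), and correctly reading off the exponents \(s_i(\ulambda(f)')\) in Theorem~\ref{cc_sizes} in the degenerate case \(\ulambda(f) = \Box\). One might also remark that the formula \eqref{size_of_Fm} for \(\# \CF_i(q)\) makes the final expression fully explicit, though it is not needed for the proof itself.
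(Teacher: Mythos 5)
Your proof is correct and follows exactly the route the paper indicates (the paper states the corollary follows "by combining Theorems~\ref{cc_sizes} and \ref{fulmans_characterization}" without spelling out the bookkeeping). Your identification of the relevant classes via Fulman's characterization, your reading of Theorem~\ref{cc_sizes} in the $\ulambda(f)=\Box$ case, and your count of classes via subsets $S_i \subseteq \CF_i(q)$ are all correct.
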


Unfortunately, we do not have an explicit formula for \(\# \CT_\mu (q)\) in general.
In fact, in Theorem~\ref{nu_n_part} and Corollary~\ref{n_minus_1_corollary}, we have technically only dealt with regular semisimple elements since \(\CT_{(n)} (q) = \CT_{(n)}^\Box (q)\) and \(\CT_{(n-1,1)} (q) = \CT_{(n-1,1)}^\Box (q)\).
As mentioned in the introduction, one can at least obtain a generating function for the sizes of the sets \(\CT_\mu (q)\) using Stong's generalization \cite{stong} of Kung's cycle index \cite{kung}.
Further discussion by Fulman appears in Section 5 of \cite{fulman}.

 %

In addition to Fulman's theorem, we will make use of the following characterization of regular semisimple elements, which appears as the final Corollary in Section 3 of Brickman-Fillmore \cite{lattice}.
Recall that a matrix \(g \in \GL_n \F_q\) is said to \myemph{stabilize} a subspace \(U \subset V\) if \(g(u) \in U\) for all \(u \in U\).
Recall also that the \myemph{lattice of stable subspaces} of a matrix $g \in \GL_n \F_q$ is the set of subspaces $U \subset V$ that \(g\) stabilizes, ordered by inclusion.

\begin{theorem}[Brickman-Fillmore \cite{lattice}] \label{helpful}
For all \(n \in \N\) and prime powers \(q\), a matrix \(g \in \GL_n \F_q\) is regular semisimple if and only if the lattice of stable subspaces of \(g\) is a Boolean lattice.
\end{theorem}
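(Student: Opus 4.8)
The plan is to work entirely with the $\F_q[z]$-module $V_g$ from Section~\ref{indexing}. The starting observation is that a subspace $U \subseteq V$ is stabilized by $g$ if and only if it is an $\F_q[z]$-submodule of $V_g$ (closure under $\F_q$-scalars is automatic, and $z$ acts as $g$). So the lattice of stable subspaces of $g$ is exactly the submodule lattice $\mathrm{Sub}(V_g)$, with meet given by intersection and join by sum. Since we take Theorem~\ref{fulmans_characterization} as the definition of regular semisimple, the goal becomes: $\mathrm{Sub}(V_g)$ is Boolean if and only if $\ulambda^g(f) \in \{\emptyset,\Box\}$ for every $f \in \CF(q)$.

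For the forward implication I would invoke Corollary~\ref{rs_rcf_corollary}: if $g$ is regular semisimple then $V_g \cong \bigoplus_{i=1}^{\ell} \F_q[z]/(h_i(z))$, where the $h_i$ are the distinct irreducible factors of the characteristic polynomial. Each summand is a field, hence a simple $\F_q[z]$-module, and distinct $h_i$ give pairwise non-isomorphic simples; so $V_g$ is a multiplicity-free semisimple module, every submodule is the sum of a uniquely determined subset of the $\F_q[z]/(h_i)$, and meets and joins correspond to intersections and unions of the index sets. This identifies $\mathrm{Sub}(V_g)$ with the Boolean lattice of subsets of $\{1,\dots,\ell\}$.

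For the converse I would first reduce to the primary case. Writing $V_g = \bigoplus_f V_{g,f}$ for the primary decomposition (so $V_{g,f} = \bigoplus_i \F_q[z]/(f^{\ulambda^g(f)_i})$), any submodule of $V_g$ is the direct sum of its intersections with the $V_{g,f}$, whence $\mathrm{Sub}(V_g) \cong \prod_f \mathrm{Sub}(V_{g,f})$ as lattices; and a finite product of bounded lattices is Boolean (i.e.\ distributive and complemented) if and only if each factor is. So fix $f$ with $\mathrm{Sub}(V_{g,f})$ Boolean, and set $k = \F_q[z]/(f(z))$, a field with $q^{\deg f}$ elements. If $\ulambda^g(f)$ has two or more parts, then the socle of $V_{g,f}$ contains a copy of $k \oplus k$; since $k$ has at least two elements, $k \oplus k$ has at least $q^{\deg f}+1 \ge 3$ distinct lines, and three of them together with $0$ and $k\oplus k$ form a copy of the diamond $M_3$ inside $\mathrm{Sub}(V_{g,f})$, contradicting distributivity. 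Hence $\ulambda^g(f)$ has at most one part. If it has exactly one part $a$, then $V_{g,f} \cong \F_q[z]/(f^a)$ and $\mathrm{Sub}(V_{g,f})$ is the chain $0 \subset f^{a-1}V_{g,f} \subset \cdots \subset V_{g,f}$ of length $a$; but a chain of length at least $2$ is not complemented, so $a \le 1$. Thus $\ulambda^g(f) \in \{\emptyset,\Box\}$ for every $f$, and $g$ is regular semisimple.

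The computations are short, so the one place requiring genuine care is the lattice isomorphism $\mathrm{Sub}(V_g) \cong \prod_f \mathrm{Sub}(V_{g,f})$ together with the fact that Boolean-ness of a product is detected factor by factor — that is, that the obstruction found inside a single primary component (indeed inside a single summand) really lives in the full stable-subspace lattice. Once that is settled, the $M_3$ and ``chain of length $\ge 2$'' obstructions, plus the standard fact that a finite lattice is Boolean iff it is distributive and complemented, close the argument. One could instead bypass the two obstruction computations by citing the structural facts that, over a PID, $\mathrm{Sub}(M)$ is distributive iff every primary component of $M$ is cyclic and complemented iff every primary component is semisimple; combined with Theorem~\ref{fulmans_characterization} this yields the result at once, but the elementary route keeps the proof self-contained.
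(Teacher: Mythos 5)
The paper does not actually prove Theorem~\ref{helpful}; it cites the result directly from Brickman--Fillmore (``the final Corollary in Section 3 of \cite{lattice}''), so there is no in-paper argument against which to compare yours. That said, your proof is correct and self-contained, and it is worth recording the route you took. You translate the stable-subspace lattice into the submodule lattice of the torsion $\F_q[z]$-module $V_g$, use the primary decomposition $V_g = \bigoplus_f V_{g,f}$ to factor $\mathrm{Sub}(V_g)$ as a direct product of lattices, and then run two local obstructions: an $M_3$ sublattice inside the socle when $\ulambda^g(f)$ has at least two parts (defeating distributivity), and a chain of length at least two when $\ulambda^g(f)$ has a part $\geq 2$ (defeating complementedness). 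Both obstructions are genuine sublattices, and distributivity and complementedness of a finite product of bounded lattices are indeed detected factor by factor, so the reduction is sound. The forward direction via Corollary~\ref{rs_rcf_corollary}, giving $V_g$ as a multiplicity-free semisimple module whose submodule lattice is the Boolean lattice on $\{1,\dots,\ell\}$, is also correct. One small stylistic note: the $M_3$ you exhibit has top element $k\oplus k$ rather than $V_{g,f}$, which is perfectly fine since distributivity is inherited by arbitrary sublattices, but you may want to say so explicitly so a reader does not expect an interval. Your approach has the advantage of being elementary and citation-free, whereas the paper simply outsources the result; if you wanted to be even terser you could, as you observe, cite the standard PID facts that $\mathrm{Sub}(M)$ is distributive iff each primary component is cyclic and complemented iff each is semisimple, and combine them with Theorem~\ref{fulmans_characterization}.
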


Next, we discuss another special class of matrices in $\GL_n \F_q$, the \myemph{regular elliptic} elements.
Recall from the introduction that we have defined a matrix \(g \in \GL_n \F_q\) to be regular elliptic if and only if its characteristic polynomial is irreducible.
Equivalently, the set of regular elliptic elements in \(\GL_n \F_q\) is \(\CT_{(n)} (q) = \CT_{(n)}^\Box (q)\).
This is just one of several characterizations of regular elliptic elements that we will find useful.

\begin{proposition}[{\cite[Prop.~4.4]{refl_fact}}]
\label{reg_ell_char}
For all \(n \in \N\) and prime powers \(q\), the following are equivalent for an element $g \in \GL_n \F_q$.
\begin{enumerate}[(i)]
\item The element $g$ is regular elliptic.
\item For all $x \in \GL_n \F_q$, $xgx^{-1} \in \para_\nu \implies \nu = (n)$, where \(\para_\nu\) is defined by \eqref{para_def} in Section~\ref{char_vals} below.
\item \label{no_stability} The element \(g\) stabilizes no proper nontrivial subspaces of \(V\).
\item The element \(g\) determines the isomorphism
\begin{equation}
\label{re_rcf}
V_g
\cong
\F_q[z] / (h_1(z)),
\end{equation}
where \(h_1 \in \CF_n (q)\) is the characteristic polynomial of \(g\).
\end{enumerate}
\end{proposition}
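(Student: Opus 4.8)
The plan is to route everything through the $\F_q[z]$-module structure on $V_g$ from Section~\ref{indexing}, using the basic observation that a subspace $U \subseteq V$ is stabilized by $g$ exactly when it is an $\F_q[z]$-submodule of $V_g$. I would prove the statement as the chain of equivalences (i) $\Leftrightarrow$ (iv) $\Leftrightarrow$ (\ref{no_stability}) $\Leftrightarrow$ (ii).

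For the module-theoretic part, recall from \eqref{rcf} that the characteristic polynomial of $g$ is $\prod_{f \in \CF(q)} f^{|\ulambda^g(f)|}$. Since $g \in \GL_n \F_q$ this polynomial has degree $n$, so it is irreducible if and only if it lies in $\CF_n(q)$, if and only if there is a single $h_1 \in \CF_n(q)$ with $\ulambda^g(h_1) = \Box$ and $\ulambda^g(f) = \emptyset$ for all other $f$ --- which is precisely the statement that \eqref{rcf} collapses to $V_g \cong \F_q[z]/(h_1(z))$. This yields (i) $\Leftrightarrow$ (iv) directly from the structure theorem. For (iv) $\Leftrightarrow$ (\ref{no_stability}): if $V_g \cong \F_q[z]/(h_1(z))$ with $h_1$ irreducible, then the quotient ring is a field, so $V_g$ is a simple $\F_q[z]$-module and its only submodules are $0$ and $V_g$; hence $g$ stabilizes no proper nontrivial subspace. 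Conversely, if $g$ stabilizes only $0$ and $V$, then $V_g$ is a simple $\F_q[z]$-module, hence cyclic, hence isomorphic to $\F_q[z]/(m(z))$ for a monic $m$ with $(m(z))$ maximal, i.e. $m \in \CF(q)$; comparing $\F_q$-dimensions gives $\deg m = n$, and $m$ is the characteristic polynomial of $g$, so $m = h_1 \in \CF_n(q)$ and (iv) holds.

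For (\ref{no_stability}) $\Leftrightarrow$ (ii), I would use that $\para_\nu$ is, up to conjugacy, the stabilizer in $\GL_n \F_q$ of the standard partial flag with jumps at dimensions $\nu_1, \nu_1 + \nu_2, \ldots$; in particular $\para_{(n)} = \GL_n \F_q$, while for any $\nu$ with at least two parts every element of $\para_\nu$ stabilizes the first flag step, a subspace of dimension $\nu_1$ with $1 \le \nu_1 \le n-1$. If $g$ stabilizes no proper nontrivial subspace, then neither does any conjugate $xgx^{-1}$ (conjugation is just a change of basis), so $xgx^{-1}$ cannot lie in any $\para_\nu$ with $\nu \ne (n)$; this is (ii). Conversely, if $g$ stabilizes a proper nontrivial subspace $U$ with $\dim U = k \in \{1,\ldots,n-1\}$, then expressing $g$ in a basis of $V$ whose first $k$ vectors span $U$ conjugates $g$ into block upper-triangular form, i.e. into $\para_{(k,\,n-k)}$, contradicting (ii) with $\nu = (k,n-k) \ne (n)$.

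The hard part will be step (\ref{no_stability}) $\Leftrightarrow$ (ii): the module-theoretic equivalences are essentially formal once \eqref{rcf} is available, but the parabolic statement requires the precise definition of $\para_\nu$ from \eqref{para_def} and some care in matching ``$g$ is conjugate into $\para_\nu$'' with ``$g$ stabilizes a subspace of dimension $\nu_1$'' --- including the bookkeeping that one may always reduce to a two-part composition $(k,n-k)$, since $\para_\nu \subseteq \para_{(\nu_1,\,n-\nu_1)}$ whenever $\nu$ has at least two parts.
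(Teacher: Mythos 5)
The paper itself does not prove this proposition: it is cited verbatim from \cite{refl_fact} (Prop.~4.4 there), so there is no in-paper argument to compare against. I will therefore only assess correctness.

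Your module-theoretic chain (i) $\Leftrightarrow$ (iv) $\Leftrightarrow$ (iii) is sound; routing everything through $V_g$ being a simple $\F_q[z]$-module is clean and matches how one would naturally prove this. But there is a real gap in the step (ii) $\Rightarrow$ (iii) (taken contrapositively). You say: if $g$ stabilizes a proper nontrivial $U$ with $\dim U = k$, then a basis of $V$ with the first $k$ vectors spanning $U$ conjugates $g$ into $\para_{(k,\,n-k)}$. But \eqref{para_def} defines $\para_\nu$ only for $\nu \vdash n$, i.e.\ for weakly \emph{decreasing} block sizes. When $k < n/2$ the pair $(k,n-k)$ is not a partition, so $\para_{(k,n-k)}$ is not defined, and your ``bookkeeping'' remark $\para_\nu \subseteq \para_{(\nu_1,n-\nu_1)}$ goes in the wrong direction to help (it is used in proving (iii) $\Rightarrow$ (ii), where it is in any case unnecessary). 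To close the gap you need the additional fact that stabilizing a $k$-dimensional subspace forces $g$ also to stabilize an $(n-k)$-dimensional subspace, so that one can always pick a stable subspace of dimension $\geq n/2$. This is true but requires an argument: e.g.\ $g$ and $g^T$ are conjugate (rational canonical form), $g^T$ stabilizes the annihilator $U^{\perp}$ of dimension $n-k$, hence $g$ stabilizes some $(n-k)$-dimensional subspace; alternatively argue from the primary decomposition of $V_g$ that the attainable dimensions of submodules form a set symmetric about $n/2$. With that lemma inserted, taking $\nu = (\max(k,n-k),\, \min(k,n-k)) \vdash n$, $\nu \ne (n)$ completes (ii) $\Leftrightarrow$ (iii), and the overall proof is correct.
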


Finally, we combine the results about regular semisimple and regular elliptic elements.
The next result, which classifies the possible stable subspaces of a regular semisimple element, will be central in proving our main tool, Theorem~\ref{main_lemma_in_intro}.

\begin{corollary}[to Thm.~\ref{helpful} and Prop.~\ref{reg_ell_char}]
\label{really_helpful}
Suppose \(n \in \N\), \(q\) is a prime power, and \(g \in \GL_n \F_q\) is a regular semisimple element which determines the isomorphism
\begin{equation}
\label{really_helpful_isomorphism}
V_g
\cong
\F_q[z] / (h_1 (z)) \oplus \cdots \oplus \F_q[z] / (h_\ell (z))
\end{equation}
as in \eqref{rs_rcf}, where $h_1,\ldots,h_\ell \in \CF(q)$ are distinct and irreducible.
Suppose $g$ stabilizes a subspace $U \subset V$.
Let \(\tilde{U} \subset \bigoplus_{i=1}^{\ell} \F_q[z] / (h_i(z))\) denote the submodule corresponding to \(U\) under the isomorphism \eqref{really_helpful_isomorphism}.
Then there exists a subset \(I \subset \{1,\ldots,\ell\}\) such that \(\tilde{U} = \bigoplus_{i \in I} \F_q[z]/(h_i(z))\).
\end{corollary}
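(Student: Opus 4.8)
The plan is to translate everything into module language and then exploit the Boolean‑lattice structure guaranteed by Theorem~\ref{helpful}. Since the isomorphism \eqref{really_helpful_isomorphism} is an isomorphism of \(\F_q[z]\)-modules, where \(z\) acts as \(g\), it identifies the \(g\)-stable subspaces of \(V\) with the \(\F_q[z]\)-submodules of \(M := \F_q[z]/(h_1(z)) \oplus \cdots \oplus \F_q[z]/(h_\ell(z))\), compatibly with inclusion; hence it carries the lattice of \(g\)-stable subspaces of \(V\) isomorphically onto the lattice \(L\) of submodules of \(M\), and it suffices to prove that every submodule \(\tilde{U}\) of \(M\) equals \(\bigoplus_{i \in I} \F_q[z]/(h_i(z))\) for some \(I \subseteq \{1,\ldots,\ell\}\). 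Write \(W_i \subseteq M\) for the \(i\)-th summand \(\F_q[z]/(h_i(z))\).

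First I would record that, because \(g\) is regular semisimple, Theorem~\ref{helpful} tells us \(L\) is a finite Boolean lattice; in \(L\) the join of two submodules is their sum, the meet is their intersection, and the top element is \(M\). The key step is then to identify the atoms of \(L\): I claim each \(W_i\) is an atom. Indeed, \(g\) restricted to \(W_i\) acts as multiplication by \(z\) on \(\F_q[z]/(h_i(z))\), so it is an element of \(\GL_{\deg h_i} \F_q\) with irreducible characteristic polynomial \(h_i\), i.e.\ a regular elliptic element; by Proposition~\ref{reg_ell_char}, in the form that a regular elliptic element stabilizes no proper nontrivial subspace of its ambient space, the subspace \(W_i\) contains no nonzero \(g\)-stable subspace other than itself. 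Thus \(W_1,\ldots,W_\ell\) are atoms of \(L\); they are \(\ell\) distinct ones because \(M = \bigoplus_i W_i\) with every \(W_i \neq 0\), and their join \(\sum_{i=1}^\ell W_i = M\) is the top of \(L\).

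To finish, I would invoke elementary finite Boolean‑lattice combinatorics. In a finite Boolean lattice the top element is the join of the full set of atoms, and no proper subset of the atoms has join equal to the top; since \(W_1,\ldots,W_\ell\) are atoms whose join is the top, they must be exactly the complete list of atoms of \(L\). Again by the structure of a finite Boolean lattice, every element is the join of the atoms lying below it, so an arbitrary submodule \(\tilde{U} \in L\) satisfies \(\tilde{U} = \bigvee_{\,i : W_i \subseteq \tilde{U}} W_i\). Setting \(I = \{i : W_i \subseteq \tilde{U}\}\) and using that the \(W_i\) are independent, this join is \(\sum_{i \in I} W_i = \bigoplus_{i \in I} \F_q[z]/(h_i(z))\), which is the desired conclusion.

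The main obstacle is the middle step, namely pinning down the atoms of the Boolean lattice \(L\) as precisely the summands \(W_i\); the crucial input there is Proposition~\ref{reg_ell_char} applied to the regular elliptic restrictions \(g|_{W_i}\), together with the observation that each \(W_i\) is \(g\)-stable with all of \(M\) as their internal direct sum. (One could instead bypass Theorem~\ref{helpful} entirely: each \(W_i\) is a simple \(\F_q[z]\)-module with annihilator \((h_i)\), the \(h_i\) are distinct so these simple modules are pairwise non‑isomorphic, and \(M\) is their direct sum, so the structure theory of semisimple modules forces every submodule to be a partial direct sum of the \(W_i\); but the argument through the stable‑subspace lattice is the one consistent with the surrounding development.)
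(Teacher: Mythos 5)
Your proposal is correct and follows essentially the same route as the paper: both invoke Theorem~\ref{helpful} for the Boolean-lattice structure and then reduce to showing each summand \(\F_q[z]/(h_i(z))\) has no proper nontrivial \(g\)-stable subspace, which is handled by observing \(g\) restricted there is regular elliptic and applying Proposition~\ref{reg_ell_char}. The only difference is that the paper compresses the ``why this suffices'' step into a single phrase, whereas you spell out the Boolean-lattice combinatorics (identifying the \(W_i\) as the complete list of atoms and writing every element as a join of atoms below it) explicitly.
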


\begin{proof}
By Theorem \ref{helpful}, it suffices to show that, for each \(i \in \{1,\ldots,\ell\}\), \(g\) stabilizes no proper nontrivial subspace of $\F_q[z]/(h_i(z))$.
Consider the restriction of \(g\) to \(\F_q[z]/(h_i(z))\).
Since each $h_i$ is irreducible, the restriction of $g$ to $\F_q[z] / (h_i(z))$ is regular elliptic. By Proposition \ref{reg_ell_char}, we are done.
\end{proof}

\section{\(\GL_n \F_q\) character theory}
\label{three}

In this section, we describe how to compute the values of all the irreducible characters of $\GL_n \F_q$.
Just as with the symmetric groups, we will index the irreducible characters of \(\GL_n \F_q\) in the same way that we haved indexed its conjugacy classes.
The following is a condensed review of the topic, based on Green's work \cite{green}. The notation and language we use vary from Green's original choices. For another exposition see Macdonald \cite{macD}.

\subsection{Computing the irreducible $\GL_n \F_q$ characters} \label{char_vals}

We first introduce more notation.
For positive integers \(d\), define a function \(\alpha_d : d\Z \to \Z\) by \(\alpha_d (m) = [m/d]_{q^{d}}\).
Given a polynomial \(f \in \CF (q)\), we will consider the function \(\ell_f \alpha_{\deg f}\), which is obtained by scaling \(\alpha_{\deg f}\) by the integer \(\ell_f\).

We require a process called \myemph{parabolic induction}, which we describe now.
If $\nu = (\nu_1,\ldots,\nu_\ell) \vdash n$, let $\para_\nu$ denote the \myemph{parabolic subgroup} of $\GL_{n} \F_q$ consisting of block upper-triangular matrices with block sizes $\nu_1, \ldots, \nu_\ell$. Explicitly,
\begin{equation}
\label{para_def}
\para_\nu = \left \{
\begin{bmatrix}
A_{11} & A_{12} & \cdots & A_{1 \ell} \\
0 & A_{22} & \cdots & A_{2 \ell} \\
0 & 0 & \ddots & \vdots \\
0 & 0 & 0 & A_{\ell \ell}
\end{bmatrix} \in \GL_{n} \F_q :
A_{ii} \in \GL_{\nu_i} \F_q \text{ for all } 1 \leq i \leq \ell
\right \}.
\end{equation}
For each $i \in \{1,\ldots,\ell\}$, let $\pi_i^\nu : \para_{\nu} \to \GL_{\nu_i} \F_q$ denote projection
onto the $i^\text{th}$ diagonal block:
\begin{equation}
A = \begin{bmatrix}
A_{11} & A_{12} & \cdots & A_{1 \ell} \\
0 & A_{22} & \cdots & A_{2 \ell} \\
0 & 0 & \ddots & \vdots \\
0 & 0 & 0 & A_{\ell \ell}
\end{bmatrix}
\in \para_\nu
\implies
\pi_i^\nu (A) = A_{ii}.
\end{equation}
Given arbitrary characters $\chi_i$ of $\GL_{\nu_i} \F_q$ for each $i \in \{1,\ldots,\ell\}$,
we define their parabolic induction product $\bigodot_{i = 1}^\ell \chi_i$, which is a character of $\GL_{|\nu|} \F_q$, by
\begin{equation} \label{parabolic_induction} \left(\bigodot_{i = 1}^\ell \chi_i \right) (g) = \frac{1}{\# \para_\nu} \sum_{\substack{x \in \GL_{|\nu|} \F_q \\ xgx^{-1} \in \para_\nu}} \, \, \prod_{i = 1}^{\ell(\nu)} \chi_i \left ( \pi^\nu_i ( xgx^{-1} ) \right ). \end{equation}

We now define the irreducible characters of $\GL_n \F_q$ in four steps. First, we define the \uline{P}rimary-support characters, $P$.
Second, we define the para\uline{B}olic characters, $B$, in terms of the $P$'s. Third, we define the \uline{J}rreducible characters, $J$, in terms of the $B$'s. Finally, we define the irreducible characters \(\chi^\ulambda\) of \(\GL_n \F_q\) in terms of the $J$'s. We refer to this as the `PBJ' method. The names \myemph{Primary-support}, \myemph{paraBolic}, and \myemph{Jrreducible} were not used by Green.

For each $b \in \Z$ and $d \in \N$, we define the \uline{P}rimary-support character, $P_d^b$, of $\GL_d \F_q$ as follows. For any $\mu \in \Par \setminus \{ \emptyset \}$, let $\kappa (\mu, t) = \prod_{i=1}^{\ell(\mu) - 1} (1-t^i)$, where $\kappa(\mu, t) = 1$ if $\ell(\mu) = 1$.
Then define
\begin{equation}
\label{P_chars}
P_d^b (g) = \begin{cases} \kappa(\mu,q^{\deg h}) \sum_{i=1}^{\deg h} \te(\eps_{\deg h}^{\ell_h})^{q^i b} & \text{if } \ulambda^g = h \mapsto \mu \text{ is primary}, \\ 0 & \text{otherwise.} \end{cases}
\end{equation}
The fact that $P_d^k$ vanishes away from primary conjugacy classes explains the name \myemph{Primary-support}.
For each \(d \in \Z\), each $\nu \in \Par \setminus \{\emptyset\}$ such that \(d\) divides every part of \(\nu\), and each function $\alpha : d \Z \to \Z$, we define the para\uline{B}olic character, $B_\nu^\alpha$, of $\GL_{|\nu|} \F_q$ by
\begin{equation}
\label{B_chars}
B_\nu^\alpha = \bigodot_{i = 1}^{\ell(\nu)} P_{\nu_i}^{\alpha(\nu_i)}. \end{equation}
For each $f \in \CF (q)$ and $\lambda \in \Par$, we define the \uline{J}rreducible character, $J_f^\lambda$, of $\GL_{|\lambda| \cdot \deg f} \F_q$
by
\begin{equation}
\label{jrreducible}
J_f^\lambda
=
(-1)^{|\lambda| \cdot (\deg f - 1)} \cdot \sum_{\nu \vdash |\lambda|} \frac{1}{z_\nu} \cdot \chi^\lambda_\nu \cdot B^{\ell_f \alpha_{\deg f}}_{(\deg f) \nu}.
\end{equation}
Finally, for each index $\ulambda : \CF (q) \to \Par$ satisfying $\| \ulambda \| = n$, we define the irreducible character $\chi^\ulambda$ of $\GL_n \F_q$
by
\begin{equation} \label{grand_finale} \chi^{\ulambda} = \bigodot_{f \in \supp \ulambda} J_f^{\ulambda(f)}. \end{equation}

\begin{theorem}[Green {\cite[Theorem~14]{green}}]
\label{greens_main_thm}
For all \(n \in \N\) and prime powers \(q\), the set $\{ \chi^\ulambda : \| \ulambda \| = n\}$ is the set of distinct, irreducible, complex characters of $\GL_n \F_q$.
\end{theorem}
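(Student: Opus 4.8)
The plan is to follow the `PBJ' scaffolding already in place and verify three things: that the class functions $\chi^{\ulambda}$ of \eqref{grand_finale} are pairwise orthonormal, that each is $\pm 1$ times a genuine irreducible character, and that there are exactly as many of them as conjugacy classes of $\GL_n\F_q$. The third point is essentially bookkeeping, since Section~\ref{indexing} already records a bijection between conjugacy classes and indices $\ulambda$ with $\|\ulambda\|=n$: a pairwise-orthonormal family of $\pm 1$ times irreducible characters whose cardinality equals $\#\,\Irr(\GL_n\F_q)$ must exhaust $\Irr(\GL_n\F_q)$, and the signs are forced to $+1$ because the construction keeps $\chi^\ulambda(1)>0$ in view. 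So the real work is in the first two points.

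First I would fix the ambient algebra. Parabolic induction \eqref{parabolic_induction}, together with its adjoint, parabolic restriction (restrict to $\para_\nu$ and apply the block projections $\pi^\nu_i$), makes $R=\bigoplus_n R_n$, with $R_n$ the space of class functions on $\GL_n\F_q$, into a graded commutative associative ring in which induction and restriction are mutually adjoint for the standard Hermitian pairings; commutativity and associativity are a Mackey double-coset computation over parabolic subgroups, and adjointness is Frobenius reciprocity. The structural heart is a \emph{characteristic map}: the subring of $R$ generated by the Primary-support characters $P^b_d$ of \eqref{P_chars} is isomorphic, as a graded ring, to a tensor product $\bigotimes_{f\in\CF(q)}\Lambda^{(f)}$ of copies of the ring $\Lambda$ of symmetric functions, one factor per $f\in\CF(q)$, under which the power sum $p_m$ of $\Lambda^{(f)}$ corresponds, up to the explicit scalars and signs of \eqref{B_chars}--\eqref{jrreducible}, to the induced character $B^{\ell_f\alpha_{\deg f}}_{(\deg f)(m)}$. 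Since $s_\lambda=\sum_{\nu\vdash|\lambda|}z_\nu^{-1}\chi^\lambda_\nu\,p_\nu$, this identifies $J^\lambda_f$ of \eqref{jrreducible} with the Schur function $s_\lambda$ in $\Lambda^{(f)}$, and $\chi^\ulambda$ with $\prod_f s_{\ulambda(f)}$ spread across distinct tensor factors. Orthonormality of $\{\chi^\ulambda:\|\ulambda\|=n\}$ then reduces to the orthonormality of Schur functions, together with the orthogonality of characters supported on conjugacy classes with different characteristic polynomials, which is exactly what makes the factors $\Lambda^{(f)}$ pairwise orthogonal. The one fiddly computation here is verifying that the Hermitian pairing on the $P$-subring is carried to the Hall inner product: expand a $\bigodot$-product of $P$'s into $B$'s by a Pieri/Mackey rule and match it against Theorem~\ref{cc_sizes} for the sizes of primary conjugacy classes and \eqref{P_chars} for the values of $P^b_d$ on them.

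The hard part will be the second point: showing each $\chi^\ulambda$ is $\pm1$ times an \emph{actual} irreducible character, and not merely a virtual character of norm one. Because everything in \eqref{B_chars}--\eqref{grand_finale} is produced from the $P^b_d$ by parabolic induction and $\Z$-linear combinations, it suffices to know that the $P^b_d$ --- equivalently the Jrreducible characters $J^\lambda_f$ --- are genuine virtual characters; then each $\chi^\ulambda$ has norm one, hence is $\pm1$ times an irreducible character, and $\chi^\ulambda(1)>0$ selects the sign. This genuineness is the representation-theoretic core. In Green's original treatment it is obtained by constructing, essentially by hand, enough honest characters from the regular representations of the maximal tori of $\GL_n\F_q$ (his `basic' characters) and running an inductive positivity-and-degree argument. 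The modern alternative is Deligne--Lusztig theory: the characters $J^\lambda_f$ attached to a single $f$ of degree $d$ lie in the Deligne--Lusztig family of tori built from $\F_{q^d}^\times$, such virtual characters are genuine, and for $\lambda=\Box$ with the corresponding character in general position the Deligne--Lusztig virtual character is already $\pm1$ times an irreducible character. Once the $\chi^\ulambda$ are known to be orthonormal and genuine up to sign, the counting remark from the first paragraph closes the proof.
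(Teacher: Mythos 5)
The paper does not prove this theorem at all: it states it as Green's Theorem~14 and cites~\cite{green}, treating the whole PBJ apparatus of Section~\ref{three} as a recalled construction rather than something to be verified. So there is no paper proof to compare against, and your proposal is doing strictly more work than the source text asks of itself.

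As a free-standing sketch of Green's actual argument, your outline is faithful in broad strokes: passing to the graded ring $R=\bigoplus_n R_n$ under parabolic induction, setting up the characteristic map to $\bigotimes_f \Lambda^{(f)}$ so that the expansion \eqref{jrreducible} becomes the familiar $s_\lambda=\sum_\nu z_\nu^{-1}\chi^\lambda_\nu p_\nu$, reducing orthonormality to the Hall inner product, and then the separate positivity argument to pin down signs. That is the right skeleton. But two of the three pillars are asserted rather than proved, and both are where the substance lives. First, the claim that the pairing on the $P$-subring \emph{is} the Hall inner product is not a ``fiddly computation'' in the sense of being routine: one must verify that $\odot$ is commutative and associative (a genuine Mackey calculation over parabolics, which Green proves), that restriction is adjoint to $\odot$, and that the structure constants produced by \eqref{P_chars} together with the class sizes of Theorem~\ref{cc_sizes} match the Hall pairing on power sums; you name these ingredients but do not close the loop. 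Second, and more seriously, you hand off the genuineness of the $J^\lambda_f$ entirely to ``Green's basic characters and a positivity argument'' or to Deligne--Lusztig theory. That is where essentially all the difficulty in the theorem resides; invoking Deligne--Lusztig in particular imports machinery far heavier than anything Green had available and is closer to citing a later theorem than to giving a proof. So the proposal is a correct roadmap, but it is a roadmap whose two load-bearing bridges are labeled ``cf.~\cite{green}'' --- which, amusingly, is exactly what the paper itself does in one line.
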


Green also showed that $\odot$ is commutative and associative, so it makes sense to use an arbitrary finite indexing set for the parabolic induction product. In fact, letting $J_f^\emptyset$ denote the empty function, which is the identity element with respect to $\odot$, we can define
\begin{equation}
\chi^\ulambda = \bigodot_{f \in \CF (q)} J_f^{\ulambda(f)}.
\end{equation}

Note that we have indexed the irreducible characters in the same way that we indexed the conjugacy classes.
Moreover, we also refer to an irreducible character as \myemph{primary} if its index is primary, and we use the usual \(f \mapsto \lambda\) notation for its index.
Thus, \myemph{primary characters} are those of the form \(\chi^{f \mapsto \lambda}\) for some \(f \in \CF (q)\) and \(\lambda \in \Par\).

\begin{example}
\label{runningexample}
Using the notation from Example~\ref{conj_class_example}, we record in Table~\ref{GL3F2_char_table} the character table for $\GL_3 \F_2$.
Our choice of \(\eps\) was made such that \(f_3(\eps_3) = 0\).
The rows correspond to the characters, and the columns correspond to the conjugacy classes.
In the row labels, we write, for instance, $f_1 \mapsto (2,1)$ instead of $\chi^{f_1 \mapsto (2,1)}$ for simplicity.
Let $\zeta_7 = e^{2 \pi i / 7}$.
\begin{table}[ht]
\caption{The character table of \(\GL_3 \F_2\)}
\centering
\begin{tabular}{ c || c | c | c | c | c | c }
 & $U_1$ & $U_2$ & $U_3$ & $E$ & $\tilde{E}$ & $C_0$ \\ \hline \hline
$f_1 \mapsto (1,1,1)$ & $8$ & $0$ & $0$ & $1$ & $1$ & $-1$  \\
$f_1 \mapsto (2,1)$ & $6$ & $2$ & $0$ & $-1$ & $-1$ & $0$ \\
$f_1 \mapsto (3)$ & $1$ & $1$ & $1$ & $1$ & $1$ & $1$ \\
$f_3 \mapsto (1)$ & $3$ & $-1$ & $1$ & $\zeta_7 + \zeta_7^2 + \zeta_7^4$ & $\zeta_7^3 + \zeta_7^5 + \zeta_7^6$ & $0$  \\
$\tilde{f}_3 \mapsto (1)$ & $3$ & $-1$ & $1$ & $\zeta_7^3 + \zeta_7^5 + \zeta_7^6$ & $\zeta_7 + \zeta_7^2 + \zeta_7^4$ & $0$  \\
$\ulambda_0$ & $7$ & $-1$ & $-1$ & $0$ & $0$ & $1$  \\
\end{tabular}
\label{GL3F2_char_table}
\end{table}
\end{example}

\subsection{Degrees of the irreducible $\GL_n \F_q$ characters}

Green also gives an explicit formula for the degrees of the irreducible characters $\chi^\ulambda$.
For any \(m \in \N\) and prime power \(q\), let $\psi_m (q) = (q^m - 1) \cdots (q^2-1)(q-1)$.
For any partition $\lambda$, let $b(\lambda) = \sum_{i=1}^{\ell(\lambda)} (i-1)\lambda_i$ and define
\begin{equation}
\left [ \lambda : q \right ] = q^{b(\lambda)} \frac{\prod_{1 \leq i < j \leq \ell(\lambda)} \left ( q^{(\lambda_i - \lambda_j) - (i - j)} - 1 \right )}{\prod_{i=1}^{\ell(\lambda)} \psi_{\lambda_i + \ell(\lambda) - i} (q)}.
\end{equation}

\begin{theorem}[Green {\cite[Theorem~14]{green}}]
\label{degrees}
For all \(n \in \N\), prime powers \(q\), and $\ulambda : \CF (q) \to \Par$ with $\| \ulambda \| = n$,
the degree of the irreducible character $\chi^\ulambda$ of $\GL_n \F_q$ is given by
\begin{equation}
\deg \chi^\ulambda = \psi_n (q) \prod_{f \in \CF (q)} \left [ \ulambda (f) : q^{\deg f} \right ].
\end{equation}
\end{theorem}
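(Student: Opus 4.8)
The plan is to peel apart the \textbf{PBJ} construction \eqref{P_chars}--\eqref{grand_finale} by evaluating each ingredient at the identity element, reducing everything to a single symmetric-function identity. The key structural observation is that parabolic induction behaves on degrees exactly like ordinary induction: for $g=\id$ the sum in \eqref{parabolic_induction} runs over all of $\GL_{|\nu|}\F_q$ and $\pi^\nu_i(\id)=\id$, so for characters $\chi_i$ of $\GL_{\nu_i}\F_q$,
\begin{equation}
\label{gg:degodot}
\deg\Bigl(\bigodot_{i=1}^{\ell}\chi_i\Bigr)
= \frac{\gamma_{|\nu|}(q)}{\#\para_\nu}\prod_{i=1}^{\ell}\deg\chi_i,
\qquad
\#\para_\nu = q^{\sum_{i<j}\nu_i\nu_j}\prod_{i=1}^{\ell}\gamma_{\nu_i}(q).
\end{equation}
Combined with $\gamma_m(q)=q^{\binom m2}\psi_m(q)$ (immediate from Table~\ref{formula_table}, since $(q-1)^m[m]_q!=\psi_m(q)$) and the identity $\binom{|\nu|}{2}=\sum_i\binom{\nu_i}{2}+\sum_{i<j}\nu_i\nu_j$, the factor $\gamma_{|\nu|}(q)/\#\para_\nu$ is a monomial in $q$ times a ratio of $\psi$'s. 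Applying \eqref{gg:degodot} to \eqref{grand_finale} with block sizes $n_f=|\ulambda(f)|\deg f$ over $f\in\supp\ulambda$, the $q$-monomial part has exponent $\binom n2-\sum_f\binom{n_f}{2}-\sum_{f<f'}n_fn_{f'}=0$ and the $\psi_{n_f}(q)$'s cancel against the $\gamma_{n_f}(q)$'s, so it suffices to prove the primary case $\deg\chi^{f\mapsto\lambda}=\psi_{|\lambda|\deg f}(q)\,[\lambda:q^{\deg f}]$ for each $f\in\CF(q)$ and $\lambda\in\Par$ (with $[\emptyset:q^{\deg f}]=1$ handling $f\notin\supp\ulambda$).

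For the primary case, first compute $\deg P^b_d$ from \eqref{P_chars}: the identity matrix of $\GL_d\F_q$ has primary index $z-1\mapsto(1^d)$, the unique root $1=\eps_1^0$ of $z-1$ makes the $\te$-factor equal to $1$, and $\kappa((1^d),q)=\prod_{i=1}^{d-1}(1-q^i)$, whence $\deg P^b_d=(-1)^{d-1}\psi_{d-1}(q)$ independently of $b$. Feeding this and \eqref{gg:degodot} into \eqref{B_chars}, the factors $\psi_{d\nu_i-1}(q)$ cancel all but the top factor $q^{d\nu_i}-1$ of each $\gamma_{d\nu_i}(q)$, the powers of $q$ cancel as above, and one is left with $\deg B^{\ell_f\alpha_{\deg f}}_{(\deg f)\nu}=(-1)^{|\lambda|\deg f-\ell(\nu)}\psi_{|\lambda|\deg f}(q)\prod_i(q^{(\deg f)\nu_i}-1)^{-1}$ for $\nu\vdash|\lambda|$. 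Substituting into \eqref{jrreducible} and simplifying signs (using $|\nu|=|\lambda|$ and $(-1)^{|\lambda|}(-1)^{\ell(\nu)}=\prod_i(-1)^{\nu_i-1}$) collapses everything to
\begin{equation}
\label{gg:degJ}
\deg J^\lambda_f=\deg\chi^{f\mapsto\lambda}
=\psi_{|\lambda|\deg f}(q)\sum_{\nu\vdash|\lambda|}\frac{\chi^\lambda_\nu}{z_\nu}\prod_{i=1}^{\ell(\nu)}\frac{(-1)^{\nu_i-1}}{q^{(\deg f)\nu_i}-1}.
\end{equation}

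The remaining task, and the one genuinely nontrivial step, is to identify the $\nu$-sum in \eqref{gg:degJ} with $[\lambda:q^{\deg f}]$; this is where classical symmetric-function combinatorics enters, and I expect it to be the main obstacle. Writing $t=q^{-\deg f}$, recall the power-sum expansion $s_\lambda=\sum_{\nu\vdash|\lambda|}z_\nu^{-1}\chi^\lambda_\nu\,p_\nu$ of the Schur function and the fundamental involution $\omega$, for which $\omega(p_m)=(-1)^{m-1}p_m$ and $\omega(s_\lambda)=s_{\lambda'}$. Since $(q^{(\deg f)m}-1)^{-1}=\sum_{j\ge1}t^{jm}$, the sum in \eqref{gg:degJ} is exactly $s_{\lambda'}$ evaluated at the geometric alphabet $(t,t^2,t^3,\dots)$. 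By homogeneity and the classical principal-specialization (hook-length) formula $s_\mu(1,t,t^2,\dots)=t^{b(\mu)}\big/\prod_{x\in\mu}(1-t^{h(x)})$, together with the facts that the hook lengths of $\lambda'$ match those of $\lambda$ and $\sum_x h(x)=|\lambda|+b(\lambda)+b(\lambda')$, this evaluates to $(q^{\deg f})^{b(\lambda)}\big/\prod_{x\in\lambda}\bigl((q^{\deg f})^{h(x)}-1\bigr)$; this quotient over hook lengths equals the Weyl-denominator expression defining $[\lambda:q^{\deg f}]$ by the standard equivalence of the two forms of the principal specialization (see Macdonald \cite{macD} or Stanley \cite{EC2}). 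One could instead prove the needed identity between the $\nu$-sum and $[\lambda:q^{\deg f}]$ directly by an inclusion–exclusion manipulation, but routing it through $\omega s_\lambda$ at a geometric alphabet is cleanest. Combining this with \eqref{gg:degJ}, \eqref{gg:degodot}, and the reduction of the first paragraph then yields $\deg\chi^\ulambda=\psi_n(q)\prod_{f\in\CF(q)}[\ulambda(f):q^{\deg f}]$, as claimed.
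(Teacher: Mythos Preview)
The paper does not prove this theorem: it is quoted verbatim from Green \cite[Theorem~14]{green} and followed only by a worked example, so there is no ``paper's own proof'' to compare against.

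That said, your argument is sound and is essentially the standard derivation one finds by unwinding Green's construction. The degree-multiplicativity of $\odot$ in \eqref{gg:degodot} is correct, the cancellation of $q$-powers via $\binom{n}{2}=\sum_f\binom{n_f}{2}+\sum_{f<f'}n_fn_{f'}$ is clean, and your computation of $\deg P^b_d=(-1)^{d-1}\psi_{d-1}(q)$ and the resulting expression \eqref{gg:degJ} are right. The identification of the $\nu$-sum with $[\lambda:q^{\deg f}]$ via $s_{\lambda'}$ at the geometric alphabet $(t,t^2,\dots)$, $t=q^{-\deg f}$, is exactly the right move; the equality between the hook-product form $(q^d)^{b(\lambda)}\big/\prod_{x\in\lambda}((q^d)^{h(x)}-1)$ and Green's Weyl-type expression for $[\lambda:q^d]$ is the classical identity $\prod_{x\in\lambda}(q^{h(x)}-1)=\prod_i\psi_{\lambda_i+\ell-i}(q)\big/\prod_{i<j}(q^{(\lambda_i-\lambda_j)-(i-j)}-1)$, which is indeed in Macdonald (Chapter~I, Example~I.1.3) and in \cite[\S7.21]{EC2}. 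One small wording quibble: in your first paragraph the phrase ``the $\psi_{n_f}(q)$'s cancel against the $\gamma_{n_f}(q)$'s'' is slightly misleading---what actually happens is that $\gamma_{|\nu|}(q)/\#\para_\nu=\psi_n(q)/\prod_f\psi_{n_f}(q)$, and the denominator $\psi_{n_f}(q)$'s then cancel against the leading factor of $\deg J_f^{\ulambda(f)}$ once you have established the primary case. The mathematics is fine; just tighten that sentence.
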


\begin{example}
We use Theorem~\ref{degrees} to calculate $\deg \chi^{f_1 \mapsto (2,1)}$.
By Theorem~\ref{degrees},
\[
\deg \chi^{f_1 \mapsto (2,1)}
=
\psi_3(2) \cdot [(2,1): 2]
=
\psi_3(2) \cdot \frac{6}{\psi_3(2) \cdot \psi_1(2)} = 6,
\]
which agrees with \(\chi^{f \mapsto (2,1)}\) evaluated at \(U_1\) as recorded in Table~\ref{GL3F2_char_table}.
\end{example}

\subsection{Certain character values}

Regular semisimple elements have many nice properties. The following theorem, which follows from Steinberg's work, describes one such property.
\begin{theorem}[Steinberg \cite{steinberg}]
\label{sn_stuff}
For all \(n \in \N\), prime powers \(q\), partitions \(\lambda, \mu \vdash n\), and \(g \in \CT_\mu^\Box (q)\), we have \(\chi^{z-1 \mapsto \lambda} (g) = \chi^\lambda_\mu\).
\end{theorem}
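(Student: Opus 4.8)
The plan is to unwind the PBJ construction of $\chi^{z-1 \mapsto \lambda}$ and match it against the Murnaghan--Nakayama combinatorics hidden in the definition of the Jrreducible characters. Since $z-1 \in \CF_1(q)$, the index $z-1 \mapsto \lambda$ has support a single linear polynomial, so by \eqref{grand_finale} we have $\chi^{z-1 \mapsto \lambda} = J_{z-1}^\lambda$. Applying \eqref{jrreducible} with $\deg f = 1$ kills the sign prefactor $(-1)^{|\lambda|(\deg f - 1)}$ and the dilation $\alpha_{\deg f}$ becomes $\alpha_1$, so
\[
\chi^{z-1 \mapsto \lambda}
=
\sum_{\nu \vdash n}
\frac{1}{z_\nu}\,
\chi^\lambda_\nu\,
B_\nu^{\ell_{z-1}\alpha_1}.
\]
Here $\ell_{z-1}$ is (a choice of) integer with $\eps_1^{\ell_{z-1}} = 1$, i.e. $\ell_{z-1} \equiv 0 \pmod{q-1}$, so the function $\ell_{z-1}\alpha_1$ takes every input to a multiple of $q-1$, and consequently $\te(\eps_{\deg h}^{\ell_{z-1}\alpha_1(\cdot)})^{q^i} = 1$ throughout.

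Next I would evaluate $B_\nu^{\ell_{z-1}\alpha_1}(g)$ for $g \in \CT_\mu^\Box(q)$. By \eqref{B_chars}, $B_\nu^{\alpha} = \bigodot_{i} P_{\nu_i}^{\alpha(\nu_i)}$, a parabolic induction product of Primary-support characters. The key structural input is Corollary~\ref{really_helpful}: since $g$ is regular semisimple with distinct irreducible factors $h_1,\dots,h_{\ell(\mu)}$ of the characteristic polynomial (of degrees $\mu_1,\dots,\mu_{\ell(\mu)}$), every subspace stabilized by a conjugate of $g$ that lands in the parabolic $\para_\nu$ decomposes as a direct sum of the $\F_q[z]/(h_i)$ summands. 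Therefore the sum in the parabolic induction formula \eqref{parabolic_induction} is supported on those $x$ for which $xgx^{-1}$ is block-diagonal with each diagonal block a regular elliptic element on a sub-sum of the $h_i$'s; for a given block structure the blocks must be companion-type matrices of the $h_i$, so the block sizes appearing force the multiset $\{\nu_1,\dots,\nu_{\ell(\nu)}\}$ to be a refinement-free match, in fact to equal $\{\mu_1,\dots,\mu_{\ell(\mu)}\}$, unless $B_\nu^{\ell_{z-1}\alpha_1}(g) = 0$. This is the combinatorial heart: counting the flags, tracking the normalization $1/\#\para_\nu$, and using $P_{d}^{0}$ evaluated on a regular elliptic degree-$d$ element, which by \eqref{P_chars} equals $\kappa(\Box, q^d)\cdot d = 1 \cdot d$ since $\ell(\Box) = 1$ gives $\kappa = 1$ and the exponential sum has $d$ terms each equal to $1$. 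Putting this together should yield $B_\nu^{\ell_{z-1}\alpha_1}(g) = z_\mu$ when $\nu = \mu$ (as multisets) and $0$ otherwise — the factor $z_\mu$ being exactly the count of ways to order blocks and the centralizer contributions that survive.

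Substituting back, only the $\nu = \mu$ term survives in the sum over $\nu \vdash n$, and it contributes $\frac{1}{z_\mu}\chi^\lambda_\mu \cdot z_\mu = \chi^\lambda_\mu$, which is the claim. I expect the main obstacle to be the bookkeeping in the previous paragraph: making precise, via Corollary~\ref{really_helpful} and the companion-matrix description of rational canonical forms, exactly which $x \in \GL_n\F_q$ contribute to \eqref{parabolic_induction}, and verifying that the resulting count of block-diagonalizing $x$ divided by $\#\para_\nu$ collapses cleanly to $z_\mu\,\delta_{\nu,\mu}$ rather than some messier expression. An alternative, possibly cleaner route for that step is to invoke a known Deligne--Lusztig / Steinberg identity — namely that $\chi^{z-1 \mapsto \lambda}$ is (up to sign conventions already absorbed) the Deligne--Lusztig virtual character $R_T^\lambda$ for a maximally split torus, whose value on a regular semisimple element in a torus of type $\mu$ is governed by the Weyl group character $\chi^\lambda$ evaluated on the corresponding element of $W = \FS_n$ — but since the paper is committed to Green's elementary PBJ formalism, I would present the direct computation and cite Steinberg \cite{steinberg} only for the underlying vanishing/uniformity facts.
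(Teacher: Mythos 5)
Your proposal is correct, but it is worth flagging the relationship to the paper's own treatment. The paper does not prove Theorem~\ref{sn_stuff} at all: it is cited to Steinberg~\cite{steinberg} as a black box. What the paper \emph{does} prove is Theorem~\ref{main_lemma_in_intro}, of which Theorem~\ref{sn_stuff} is precisely the case $d=1$, $f=z-1$ after observing that $\ell_{z-1}[\tilde\mu_i]_{q}$ is a multiple of $q^{\mu_i}-1$, so each root-of-unity sum collapses to $\mu_i$ and the product of the $1/\tilde\mu_i$ factors cancels it. Your argument reproduces exactly the proof of Theorem~\ref{main_lemma_in_intro} specialized to that case: the expansion via \eqref{jrreducible}, the vanishing of $B^{\alpha}_\nu(g)$ for $\nu\neq\mu$ (which is Lemma~\ref{d_divides_mu} with $d=1$), the flag count $\#\{x : xgx^{-1}\in\para_\mu\}/\#\para_\mu=\prod_j m_j(\mu)!$ (Lemma~\ref{parabolic_counting_lemma}, resting on Corollary~\ref{really_helpful}), and the identity $\prod_j m_j(\mu)!\cdot\prod_i\mu_i = z_\mu$. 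The details you hoped to nail down are indeed nailed down by those two lemmas.

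Two small corrections to your write-up. First, the $x$ that contribute to \eqref{parabolic_induction} need only conjugate $g$ into $\para_\nu$ (block \emph{upper-triangular}), not into a block-diagonal matrix; what matters is that the diagonal blocks are forced to be primary with squarefree characteristic polynomials, which forces them to be conjugate to companion matrices of the $h_i$'s, but off-diagonal junk is allowed. Second, your line ``$P_d^0$ evaluated on a regular elliptic degree-$d$ element \dots\ equals $\kappa(\Box,q^d)\cdot d$'' is correct with $d=\nu_i$, but since you had already reserved $d=\deg f$ (here $=1$), reusing $d$ for the block size is a notational collision worth avoiding. Neither affects the correctness of the argument. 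So: your proof is right, and it is a genuinely more self-contained route than the paper's (which just cites Steinberg), but it is not a \emph{new} route relative to the paper taken as a whole, since the machinery is exactly that of Theorem~\ref{main_lemma_in_intro}. If you were revising the paper, the cleaner move would be to state Theorem~\ref{main_lemma_in_intro} first and derive Theorem~\ref{sn_stuff} as the $d=1$, $f=z-1$ corollary, keeping the Steinberg citation for attribution.
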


Regular elliptic elements, in particular, have nice character-theoretic properties as well.
The following result echoes Corollary~\ref{MNrule_ncycle}.
\begin{corollary}[to~
Cor.~\ref{MNrule_ncycle},
Prop.~\ref{reg_ell_char},
Thm.~\ref{greens_main_thm}, and
Thm.~\ref{degrees}]
\label{reg_ell_chi}
Suppose \(n \in \N\), \(q\) is a prime power, \(\ulambda : \CF(q) \to \Par\) with \(\|\ulambda\| = n\), and \(g \in \CT_{(n)} (q)\).
If \(\chi^\ulambda (g) \neq 0\), then there exist \(d | n\), \(f \in \CF_d (q)\), and \(r \in \{0,\ldots,n/d-1\}\) such that \(\ulambda = f \mapsto (n/d - r, 1^r)\) is primary.
Moreover,
\begin{equation}
\deg \chi^{f \mapsto (n/d-r,1^r)} = \deg_{n,d,r} (q),
\end{equation}
as defined in Table~\ref{formula_table}.
\end{corollary}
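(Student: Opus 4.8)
The plan is to establish the corollary's two assertions in turn. The first --- that $\chi^\ulambda(g) = 0$ unless $\ulambda$ is the primary index $f \mapsto (n/d-r,1^r)$ --- rests on the single fact that a parabolic induction product of characters of \emph{smaller} general linear groups vanishes at a regular elliptic element. Indeed, by Proposition~\ref{reg_ell_char} a regular elliptic $g$ stabilizes no proper nontrivial subspace of $V$, so $xgx^{-1} \notin \para_\nu$ for every $x \in \GL_n\F_q$ and every composition $\nu$ with more than one part; hence the index set of the sum~\eqref{parabolic_induction} is empty and $\bigl(\bigodot_i\chi_i\bigr)(g)=0$ whenever there is more than one factor. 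Applying this to $\chi^\ulambda = \bigodot_{f\in\supp\ulambda}J_f^{\ulambda(f)}$ from~\eqref{grand_finale} forces $\#\supp\ulambda=1$, i.e.\ $\ulambda = f\mapsto\lambda$ is primary; set $d=\deg f$, so $d\mid n$ and $\lambda\vdash m:=n/d$, and $\chi^{f\mapsto\lambda}=J_f^\lambda$. Then expand $J_f^\lambda$ via~\eqref{jrreducible} and~\eqref{B_chars} as a rational combination of the parabolic characters $B_{d\nu}^{\ell_f\alpha_d}=\bigodot_{i=1}^{\ell(\nu)}P_{d\nu_i}^{\,\ell_f[\nu_i]_{q^d}}$ over $\nu\vdash m$; the same vanishing kills every term with $\ell(\nu)>1$, leaving only $\nu=(m)$, so that
\[
\chi^{f\mapsto\lambda}(g) = (-1)^{m(d-1)}\,\frac{1}{m}\,\chi^\lambda_{(m)}\,P_n^{\,\ell_f[m]_{q^d}}(g).
\]
By Corollary~\ref{MNrule_ncycle}, $\chi^\lambda_{(m)}=0$ unless $\lambda=(m-r,1^r)$ for some $r\in\{0,\dots,m-1\}$, which is exactly the asserted shape.

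For the second assertion, Theorem~\ref{degrees} gives $\deg\chi^{f\mapsto(m-r,1^r)}=\psi_n(q)\,[(m-r,1^r):q^d]$, so it suffices to prove the ``$q$-hook'' identity
\[
\psi_m(Q)\,[(m-r,1^r):Q] = Q^{\binom{r+1}{2}}\,{\genfrac[]{0pt}{1}{m-1}{r}}_Q
\]
for an indeterminate $Q$, and then put $Q=q^d$, $m=n/d$, using $\psi_m(q^d)=\prod_{j=1}^{n/d}(q^{jd}-1)$ and $\psi_n(q)=\prod_{i=1}^n(q^i-1)$ to recover $\deg_{n,d,r}(q)$ of Table~\ref{formula_table} verbatim. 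The identity is a direct expansion of the definition of $[\lambda:q]$: for $\lambda=(m-r,1^r)$ one has $\ell(\lambda)=r+1$ and $b(\lambda)=\binom{r+1}{2}$, the denominator $\prod_{i=1}^{r+1}\psi_{\lambda_i+\ell(\lambda)-i}(Q)$ collapses to $\psi_m(Q)\prod_{s=1}^r\psi_s(Q)$, and the numerator $\prod_{1\le i<j\le r+1}(Q^{(\lambda_i-\lambda_j)-(i-j)}-1)$ collapses to $\prod_{a=m-r}^{m-1}(Q^a-1)\cdot\prod_{t=1}^{r-1}(Q^t-1)^{r-t}$; cancelling $\prod_{t=1}^{r-1}(Q^t-1)^{r-t}$ against $\prod_{s=1}^r\psi_s(Q)=\prod_{t=1}^r(Q^t-1)^{r-t+1}$ leaves $\psi_r(Q)^{-1}$, while $\prod_{a=m-r}^{m-1}(Q^a-1)=\psi_{m-1}(Q)/\psi_{m-r-1}(Q)$, and assembling these pieces gives $Q^{\binom{r+1}{2}}\psi_{m-1}(Q)/\bigl(\psi_m(Q)\psi_r(Q)\psi_{m-r-1}(Q)\bigr)$, which is $Q^{\binom{r+1}{2}}\,{\genfrac[]{0pt}{1}{m-1}{r}}_Q/\psi_m(Q)$.

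Since the mathematical content is light, the main obstacle is bookkeeping in the $q$-hook computation: one must track exactly which factors $Q^t-1$ occur with which multiplicity in the numerator and denominator of $[\lambda:q]$, and the degenerate cases $r=0$ (where $\lambda=(m)$, $\ell(\lambda)=1$, the numerator empty) and $r=m-1$ (where $\lambda=(1^m)$) should be verified separately to confirm the collapsed formulas remain valid. One should also check that the vanishing argument applies verbatim to the parabolic products occurring in~\eqref{grand_finale} and~\eqref{jrreducible} --- that is, that it is legitimate to conclude $\bigl(\bigodot_i\chi_i\bigr)(g)=0$ purely from emptiness of the index set in~\eqref{parabolic_induction}, which is immediate from that definition together with Proposition~\ref{reg_ell_char}.
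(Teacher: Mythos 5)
Your proof is correct and follows exactly the route indicated by the paper's attribution of the corollary to Corollary~\ref{MNrule_ncycle}, Proposition~\ref{reg_ell_char}, Theorem~\ref{greens_main_thm}, and Theorem~\ref{degrees}: Proposition~\ref{reg_ell_char}(ii) (or equivalently (iii)) gives the vanishing of any nontrivial parabolic induction product at a regular elliptic element, which first forces $\ulambda$ to be primary via~\eqref{grand_finale} and then kills every non-$(m)$ term of~\eqref{jrreducible}, after which Corollary~\ref{MNrule_ncycle} forces the hook shape, and the degree identity follows from Theorem~\ref{degrees} by the $q$-hook bookkeeping you carry out (including the $r=0$ and $r=m-1$ checks, which do come out right). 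Since the paper does not write out a proof, there is nothing to compare beyond noting that yours is the argument the references point to.
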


It follows that, when using the Frobenius formula to enumerate factorizations involving regular elliptic elements, one only needs to consider characters of the form \(\chi^{f \mapsto (n/d-r,1^r)}\).
Therefore, when \(n\) is understood from context, and for any \(d | n\), \(f \in \CF_d (q)\), and \(r \in \{0,\ldots,n/d-1\}\), we define
\begin{equation}
\chi^{f,r} = \chi^{f \mapsto (n/d-r,1^r)}.
\end{equation}
We can now write down a further simplified version of~\eqref{ff_in_situ}.
\begin{corollary}[to~Cor.~\ref{ff_in_situ_corollary}~and~Cor.~\ref{reg_ell_chi}]
\label{simpler_ff_in_situ_corollary}
For all \(n,k \in \N\), \(\mu \vdash n\), and prime powers \(q\), we have
\begin{equation}
\label{simpler_ff_in_situ}
g_{k,\mu} (q)
=
\frac{1}{\gamma_n(q)}
\sum_{d,f,r}
\deg_{n,d,r}(q)^{1-k}
\left ( \sum_{g \in \CT_{(n)} (q)} \chi^{f,r}(g) \right )^k
\left ( \sum_{h \in \CT_\mu (q)}   \chi^{f,r}(h) \right ),
\end{equation}
where the sum is over all \(d\) dividing \(n\), \(f \in \CF_d (q)\), and \(r \in \{0,\ldots,n/d-1\}\).
Moreover, the same is true when both \(g_{k,\mu} (q)\) is replaced with \(g_{k,\mu}^\Box (q)\) and \(\CT_\mu (q)\) is replaced with \(\CT_\mu^\Box (q)\).
\end{corollary}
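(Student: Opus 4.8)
The plan is to start from Corollary~\ref{ff_in_situ_corollary} and prune the sum over $\Irr(\GL_n \F_q)$ down to the characters that can actually contribute. First I would rewrite the prefactor $1/\#\GL_n \F_q$ as $1/\gamma_n(q)$, using the fact recorded at the start of this section that $\#\GL_n \F_q = \gamma_n(q)$. By Theorem~\ref{greens_main_thm}, the irreducible characters of $\GL_n \F_q$ are exactly the $\chi^\ulambda$ with $\|\ulambda\| = n$, so the sum in the statement of Corollary~\ref{ff_in_situ_corollary} ranges over these indices.

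The key step is to discard every term whose index $\ulambda$ is not of the special primary form identified in Corollary~\ref{reg_ell_chi}. That corollary states that if $\chi^\ulambda(g) \neq 0$ for some regular elliptic $g$, then $\ulambda = f \mapsto (n/d - r, 1^r)$ for some $d \mid n$, $f \in \CF_d(q)$, and $r \in \{0,\ldots,n/d-1\}$; equivalently, for every other index $\ulambda$ we have $\chi^\ulambda(g) = 0$ for all $g \in \CT_{(n)}(q)$, so $\sum_{g \in \CT_{(n)}(q)} \chi^\ulambda(g) = 0$. Since $k \in \N$ means $k \geq 1$, the factor $\left( \sum_{g \in \CT_{(n)}(q)} \chi^\ulambda(g) \right)^{k}$ then vanishes for all such $\ulambda$, and these terms drop out. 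What remains is a sum over indices of the form $f \mapsto (n/d-r,1^r)$; since $f$ determines $d = \deg f$ and the hook partition $(n/d - r, 1^r)$ then determines $r$, this reindexes cleanly as a sum over triples $(d,f,r)$ with $d \mid n$, $f \in \CF_d(q)$, and $r \in \{0,\ldots,n/d-1\}$, and each such character is precisely $\chi^{f,r}$. (It is harmless that some of the surviving terms may themselves contribute zero; they are simply retained in the sum.)

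Finally I would substitute $\deg \chi^{f,r} = \deg_{n,d,r}(q)$, again from Corollary~\ref{reg_ell_chi}, into the surviving terms, which yields \eqref{simpler_ff_in_situ}. The argument for $g_{k,\mu}^\Box(q)$ with $\CT_\mu(q)$ replaced by $\CT_\mu^\Box(q)$ is word-for-word identical, because Corollary~\ref{ff_in_situ_corollary} already supplies the Frobenius expansion in that case and Corollary~\ref{reg_ell_chi} constrains only the regular elliptic factor $\CT_{(n)}(q)$, which is the same in both settings.

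This is essentially a bookkeeping argument built on top of Corollary~\ref{reg_ell_chi}, so I do not expect a genuine obstacle. The only two points that need care are: the vanishing of the non-hook terms genuinely requires $k \geq 1$, so that a zero base raised to the $k$-th power is zero; and the reindexing by triples $(d,f,r)$ must be checked to be a bijection onto the relevant set of indices — in particular injective — so that no irreducible character is counted more than once.
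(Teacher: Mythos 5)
Your proposal is correct and is exactly the route the paper intends: the paper states this as a corollary of Corollary~\ref{ff_in_situ_corollary} and Corollary~\ref{reg_ell_chi} without further argument, and your write-up supplies precisely the bookkeeping that justifies it — the contrapositive of Corollary~\ref{reg_ell_chi} kills the non-hook terms since $k \geq 1$, the surviving indices $f \mapsto (n/d-r,1^r)$ are in bijection with triples $(d,f,r)$ (as $f$ determines $d = \deg f$ and the hook determines $r$), and the degree substitution comes from the same corollary. The remark that the $\Box$ case goes through verbatim because the pruning only uses the $\CT_{(n)}(q)$ factor is also correct.
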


\section{Proofs of main results}
\label{main_results_section}

\subsection{Main tool} \label{reg_ss}

We begin with our main tool, Theorem~\ref{main_lemma_in_intro}, a result that allows us to evaluate primary characters on regular semisimple elements more easily.
We present some lemmas before giving the proof.
In the preliminary lemmas and in Theorem~\ref{main_lemma_in_intro}, the hypotheses include ``\(\mu \vdash n\) and \(g \in \CT_\mu^\Box\).''
In all the proofs, we will assume \(g\) is in rational canonical form and denote the distinct irreducible factors of the characteristic polynomial of \(g\) by \(h_1,\ldots,h_{\ell(\mu)}\) with \(\deg h_i = \mu_i\) for each \(i \in \{1,\ldots,\ell(\mu)\}\).
Note that this implies each \(\pi_i^\mu (g) \in \GL_{\mu_i} (q)\) is in the primary conjugacy class indexed by \(h_i \mapsto \Box\).

\begin{lemma}
\label{d_divides_mu}
For all \(n \in \N\), \(d | n\), \(\nu \vdash n / d\), \(\mu \vdash n\), prime powers \(q\), \(g \in \CT_\mu^\Box (q)\), and \(\alpha : d \Z \to \Z\), we have \(B_{d\nu}^\alpha(g) = 0\) unless \(d \nu = \mu\).
\end{lemma}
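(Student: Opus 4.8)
The plan is to unwind the definition of $B_{d\nu}^\alpha$ as a parabolic induction product and to exploit the fact that each Primary-support character $P_d^b$ vanishes off the primary conjugacy classes, together with the observation that a regular semisimple matrix has squarefree characteristic polynomial. Combining \eqref{B_chars} with \eqref{parabolic_induction},
\[
B_{d\nu}^\alpha(g) = \frac{1}{\#\para_{d\nu}} \sum_{\substack{x \in \GL_n \F_q \\ xgx^{-1} \in \para_{d\nu}}} \prod_{i=1}^{\ell(\nu)} P_{d\nu_i}^{\alpha(d\nu_i)}\bigl(\pi_i^{d\nu}(xgx^{-1})\bigr).
\]
I would argue by contraposition: assume $B_{d\nu}^\alpha(g) \neq 0$, so at least one summand is nonzero (there is no cancellation to track, since the sum vanishes as soon as every summand does). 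Fixing such an $x$, we get $P_{d\nu_i}^{\alpha(d\nu_i)}\bigl(\pi_i^{d\nu}(xgx^{-1})\bigr) \neq 0$ for each $i$, so $\pi_i^{d\nu}(xgx^{-1})$ lies in a primary conjugacy class by \eqref{P_chars}, say with index $p_i \mapsto \lambda^{(i)}$; its characteristic polynomial is then $p_i^{|\lambda^{(i)}|}$, a prime power of degree $d\nu_i$.

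Next I would compare characteristic polynomials. Since $xgx^{-1} \in \para_{d\nu}$ is block upper-triangular with diagonal blocks $\pi_1^{d\nu}(xgx^{-1}),\ldots,\pi_{\ell(\nu)}^{d\nu}(xgx^{-1})$, its characteristic polynomial equals $\prod_{i=1}^{\ell(\nu)} p_i^{|\lambda^{(i)}|}$. Conjugation preserves the characteristic polynomial, so this also equals the characteristic polynomial of $g$, namely $h_1 \cdots h_{\ell(\mu)}$, which is squarefree because $g$ is regular semisimple (Theorem~\ref{fulmans_characterization} gives $\ulambda^g(f) \in \{\emptyset,\Box\}$ for all $f$). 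Unique factorization in $\F_q[z]$ then forces every $|\lambda^{(i)}| = 1$ and the $p_i$ to be pairwise distinct, so $\{p_1,\ldots,p_{\ell(\nu)}\} = \{h_1,\ldots,h_{\ell(\mu)}\}$ as sets. Matching degrees, the multiset $\{d\nu_i\}$ coincides with $\{\deg h_j\} = \{\mu_j\}$, and since both $d\nu$ and $\mu$ are weakly decreasing partitions this yields $d\nu = \mu$. Contrapositively, $d\nu \neq \mu$ forces $B_{d\nu}^\alpha(g) = 0$.

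I do not foresee a genuine obstacle. The one point deserving care is that both hypotheses are load-bearing: the Primary-support property of $P$ is what makes each diagonal block have prime-power characteristic polynomial, while the regular semisimplicity of $g$ is what makes the global characteristic polynomial squarefree, so that these prime powers cannot amalgamate several distinct factors $h_j$ into a single larger block. One should also be slightly careful to invoke the standard facts that the characteristic polynomial of a block upper-triangular matrix is the product of those of its diagonal blocks and that a primary element with index $p \mapsto \lambda$ has characteristic polynomial $p^{|\lambda|}$, both of which are already implicit in Section~\ref{indexing}.
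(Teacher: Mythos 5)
Your proof is correct and follows essentially the same line of reasoning as the paper: unwind the parabolic induction product, observe that Primary-support characters force each diagonal block of $xgx^{-1}$ to be primary with prime-power characteristic polynomial, multiply these over the block-upper-triangular structure, and invoke squarefreeness of the characteristic polynomial of a regular semisimple element to force each exponent to be $1$ and to match the degree multisets. The contrapositive framing and the slightly different bookkeeping notation ($p_i, \lambda^{(i)}$ versus $\rho_i, a_i$) are cosmetic.
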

\begin{proof}
By definition,
\begin{equation}
\label{eqn_for_d_divides_mu}
B_{d \nu}^\alpha (g)
=
\frac{1}{\#\para_{d \nu}}
\sum_{\substack{x \in \GL_n \F_q \\ xgx^{-1} \in \para_{d \nu}}}
\prod_{i = 1}^{\ell(\nu)}
P_{d \nu_i}^{\alpha (d \nu_i)} \left (\pi^{d \nu}_i (xgx^{-1}) \right ).
\end{equation}
Consider a single summand in \eqref{eqn_for_d_divides_mu}, which is a product of Primary-support character values.
By definition, the Primary-support characters vanish away from primary conjugacy classes.
Thus, the product of them vanishes if any diagonal block of \(xgx^{-1}\) is not primary.
So assume the product of the $P$ characters in \eqref{eqn_for_d_divides_mu} does not vanish, and hence each block \(\pi_i^{d \nu} (xgx^{-1})\) is primary.

For each \(i \in \{1,\ldots,\ell(\nu)\}\), let \(\tilde{h}_i\) be the characteristic polynomial of \(\pi^{d \nu}_i (xgx^{-1})\).
Since \(xgx^{-1} \in \para_{d \nu}\) has a block-upper-triangular structure, its characteristic polynomial equals \(\prod_{i=1}^{\ell(\nu)} \tilde{h}_i\).
The fact that each \(\pi^{d \nu}_i ( xgx^{-1})\) is primary implies that each \(\tilde{h}_i\) is a power \(\rho_i^{a_i}\) of an irreducible polynomial \(\rho_i \in \CF (q)\).
On the other hand, \(g\) is regular semisimple, meaning its characteristic polynomial has no repeated factors.
Thus, each \(a_i = 1\) and there exists a permutation \(\sigma \in \FS_{\ell(\mu)}\) such that \(\rho_i = \tilde{h}_i = h_{\sigma(i)}\) for all \(i \in \{1,\ldots,\ell(\mu)\}\).
Computing degrees show that \(d \nu_i = \deg \tilde{h}_i = \deg h_{\sigma(i)} = \mu_{\sigma(i)}\) for all \(i \in \{1,\ldots,\ell(\mu)\}\).
This implies \(d \nu = \mu\).
\end{proof}

We require some more terminology before moving forward.
Given \(\mu \vdash n\), refer to a flag $S_\bullet$ of nested subspaces
$$ S_1 \subset S_2 \subset \cdots \subset S_{\ell(\mu)}$$
of $V$
as a \myemph{$\mu$-flag} if
$$\dim S_j = \sum_{i=1}^j \mu_i$$ for all $j \in \{1,\ldots,\ell(\mu)\}$.
Refer to an ordered basis $(e_1,\ldots,e_{n})$ of $V$ as a \myemph{basis for $S_\bullet$} if
$$ \left (e_1,\ldots,e_{\sum_{i=1}^j \mu_i} \right ) $$
is a basis for $S_j$ for each $j \in \{1,\ldots,\ell(\mu)\}$.
Conversely, each ordered basis \((e_1,\ldots,e_n)\) for \(V\) \myemph{determines a \(\mu\)-flag} by taking the \(j^\text{th}\) subspace in the flag to be the span of \(\left ( e_1,\ldots,e_{\sum_{i=1}^j \mu_i} \right )\) for each \(j \in \{1,\ldots,\ell(\mu)\}\).
Given a $\mu$-flag $S_\bullet$, say that a matrix in $\GL_{n} \F_q$ \myemph{stabilizes $S_\bullet$} if it stabilizes $S_j$ for each $j \in \{1,\ldots,\ell(\mu)\}$.

\begin{lemma}
\label{parabolic_counting_lemma}
For all \(n \in \N\), $\mu \vdash n$, prime powers \(q\), and $g \in \CT_\mu^\Box (q)$, we have
\begin{equation} \label{parabolic_counting_eqn} \# \left \{ x \in \GL_n \F_q : xgx^{-1} \in \para_\mu \right \} = \# \para_\mu \cdot \prod_{i \geq 1} m_i(\mu)! .\end{equation}
\end{lemma}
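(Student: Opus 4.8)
The plan is to reinterpret the count in terms of flags. Recall that $\para_\mu$ is precisely the stabilizer in $\GL_n \F_q$ of the standard $\mu$-flag $F^{\mathrm{std}}_\bullet$ with $F^{\mathrm{std}}_j = \mathrm{span}(e_1,\dots,e_{s_j(\mu)})$. Since a matrix $xgx^{-1}$ stabilizes a subspace $W$ if and only if $g$ stabilizes $x^{-1}W$, the condition $xgx^{-1}\in\para_\mu$ is equivalent to saying that $g$ stabilizes the $\mu$-flag $x^{-1}F^{\mathrm{std}}_\bullet$. As $\GL_n \F_q$ acts transitively on the set of all $\mu$-flags with point stabilizer $\para_\mu$, the map $x\mapsto x^{-1}F^{\mathrm{std}}_\bullet$ is surjective onto the set of $\mu$-flags and each of its fibers is a coset of $\para_\mu$, hence has size $\#\para_\mu$. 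Consequently
\[
\#\{x\in\GL_n \F_q : xgx^{-1}\in\para_\mu\} = \#\para_\mu\cdot N,
\]
where $N$ denotes the number of $\mu$-flags stabilized by $g$, and it remains to prove $N = \prod_{i\ge 1} m_i(\mu)!$.

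Next I would invoke Corollary~\ref{really_helpful}. Writing $V_g \cong \bigoplus_{i=1}^{\ell(\mu)} \F_q[z]/(h_i(z))$ as in \eqref{rs_rcf} with $\deg h_i = \mu_i$, every $g$-stable subspace of $V$ has the form $U_I := \bigoplus_{i\in I}\F_q[z]/(h_i(z))$ for a unique $I\subseteq\{1,\dots,\ell(\mu)\}$, with $\dim U_I = \sum_{i\in I}\mu_i$. Thus a $g$-stable $\mu$-flag amounts to a strictly increasing chain $\emptyset = I_0 \subsetneq I_1 \subsetneq \cdots \subsetneq I_{\ell(\mu)} = \{1,\dots,\ell(\mu)\}$ with $\sum_{i\in I_j}\mu_i = s_j(\mu)$ for all $j$. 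Since the chain has $\ell(\mu)$ strict inclusions and $|I_{\ell(\mu)}| = \ell(\mu)$, exactly one index is added at each step; writing $\{\sigma(j)\} = I_j\setminus I_{j-1}$ yields a permutation $\sigma\in\FS_{\ell(\mu)}$, and the condition $\sum_{i\in I_j}\mu_i = s_{j-1}(\mu)+\mu_j$ forces $\mu_{\sigma(j)} = \mu_j$ for every $j$. Conversely every such $\sigma$ determines a valid $g$-stable $\mu$-flag, so $N$ equals the number of $\sigma\in\FS_{\ell(\mu)}$ with $\mu_{\sigma(j)} = \mu_j$ for all $j$.

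Such permutations are exactly those permuting, among themselves, the indices within each block of equal parts of $\mu$, so there are $m_i(\mu)!$ choices for the block of parts equal to $i$, independently over $i$, giving $N = \prod_{i\ge 1} m_i(\mu)!$ and hence the claim. I expect the only delicate points to be the bookkeeping in the first paragraph — identifying $\para_\mu$ as a flag stabilizer and verifying that the fibers of $x\mapsto x^{-1}F^{\mathrm{std}}_\bullet$ each have size $\#\para_\mu$ — and the ``exactly one new index per step'' observation, which is where regular semisimplicity (through Corollary~\ref{really_helpful}) does the real work by excluding any $g$-stable subspace that is not a coordinate sum of the $\F_q[z]/(h_i(z))$.
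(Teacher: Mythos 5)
Your proposal is correct and follows essentially the same approach as the paper's proof: reinterpret the count as $\#\para_\mu$ times the number of $\mu$-flags stabilized by $g$, then use Corollary~\ref{really_helpful} to parameterize the $g$-stable $\mu$-flags and count them as $\prod_{i\geq 1} m_i(\mu)!$. The paper phrases the first reduction via ordered bases rather than cosets and counts the flags via the successive quotients $S_j/S_{j-1}$ rather than via a chain of index sets, but these are only cosmetic differences.
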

\begin{proof}
Viewing \(g\) abstractly as a linear transformation on \(V\), the left side of \eqref{parabolic_counting_eqn} is the number of ordered bases of $V$ with respect to which the matrix representing $g$ is an element of $\para_\mu$.
For any fixed basis $\CB = (v_1,\ldots,v_{n})$ for $V$, being an element of $\para_\mu$ is equivalent to stabilizing the $\mu$-flag determined by $\CB$.
Therefore, the left side of \eqref{parabolic_counting_eqn} is the product of
\begin{enumerate}[(1)]
\item the number of $\mu$-flags that $g$ stabilizes and
\item the number of ordered bases for a given $\mu$-flag.
\end{enumerate}
The second part is $\# \para_\mu$.

The first part is slightly less immediate.
Consider a $\mu$-flag
$$ S_\bullet = S_1 \subset S_2 \subset \cdots \subset S_{\ell(\mu)}$$
that $g$ stabilizes.
We must count how many choices there are for \(S_\bullet\).
For each \(i \in \{1,\ldots,\ell(\mu)\}\), let \(Q_i\) denote the quotient \(S_i / S_{i-1}\), with the convention that \(S_0\) is zero-dimensional.
Note that \(\dim Q_i = \mu_i\).
Under the isomorphism \eqref{rs_rcf}, let \(E_i \subset V\) denote the subspace corresponding to \(\F_q[z] / (h_i(z))\) for each \(i \in \{1,\ldots,\ell(\mu)\}\).
By Corollary \ref{really_helpful}, the only subspaces of $V$ that $g$ stabilizes are direct sums of the $E_i$'s.
Therefore, each $S_j$ is a direct sum of the $E_i$'s.
This implies each $Q_i$ is also a direct sum of the $E_i$'s.
Working backwards from \(Q_{\ell(\mu)}\) to \(Q_1\), we see that
\(
Q_{\ell(\mu)-m_1(\mu)+1},
\ldots,
Q_{\ell(\mu)}
\) are all 1-dimensional,
and thus must be a permutation of
\(
E_{\ell(\mu)-m_1(\mu)+1},
\ldots,
E_{\ell(\mu)}
\).
There are therefore \(m_1(\mu)!\) ways to choose the final \(m_1(\mu)\) quotients.
Likewise,
\[
Q_{\ell(\mu)-m_1(\mu)-m_2(\mu)+1},
\ldots,
Q_{\ell(\mu)-m_1(\mu)}
\]
are all 2-dimensional, and thus must be a permutation of
\[
E_{\ell(\mu)-m_1(\mu)-m_2(\mu)+1},
\ldots,
E_{\ell(\mu)-m_1(\mu)},
\]
as there are no 1-dimensional \(E_i\)'s remaining, giving \(m_2(\mu)!\) choices for those quotients.
Continuing, we see that there are \(\prod_{i \geq 1} m_i(\mu)!\) choices for \(Q_1,\ldots,Q_{\ell(\mu)}\).
Observing that the quotients determine \(S_\bullet\) uniquely, the result follows.
\end{proof}

We are now ready to prove Theorem~\ref{main_lemma_in_intro}.
Recall that given \(d | n\), \(f \in \CF_d (q)\), and \(\lambda \vdash n/d\), it provides a simple formula for \(\chi^{f \mapsto \lambda} (g)\).
Specifically, Theorem~\ref{main_lemma_in_intro} states that,
if some part of \(\mu\) is not divisible by \(d\), then \(\chi^{f \mapsto \lambda} (g) = 0\), and otherwise, there exists \(\tilde{\mu} \vdash n/d\) such that \(\mu = d \tilde{\mu}\), and we have
\begin{equation}
\chi^{f \mapsto \lambda} (g)
= (-1)^{\tfrac{n}{d}(d-1)}
\chi^\lambda_{\tilde{\mu}}
\prod_{i = 1}^{\ell(\mu)}
\frac{1}{\tilde{\mu}_i}
\sum_{\substack{ \beta_i \in \F_{q^{\mu_i}} \\ h_i(\beta_i) = 0}} \te (\beta_i)^{\ell_f [\tilde{\mu}_i]_{q^d}}.
\end{equation}

\begin{proof}[Proof of Theorem \ref{main_lemma_in_intro}]
By definitions \eqref{jrreducible} and \eqref{grand_finale}, we have
\begin{equation}
\label{in_the_beginning}
\chi^{f \mapsto \lambda} (g)
= J_f^\lambda (g) = (-1)^{\tfrac{n}{d} (d-1)} \cdot \sum_{\nu \vdash \tfrac{n}{d}} \frac{\chi^\lambda_\nu}{z_\nu} B_{d \nu}^{\ell_f \alpha_{d}} (g)
\end{equation}
By Lemma \ref{d_divides_mu}, $B_{d \nu}^{\ell_f \alpha_d} (g) = 0$ unless $d \nu = \mu$.
If some part of $\mu$ is not divisible by $d$, then $\chi^{f \mapsto \lambda}(g) = 0$, proving the first statement in the lemma.
Otherwise, there exists a unique partition \(\tilde{\mu} \vdash n/d\) such that \(\mu = d \tilde{\mu}\),
and only the summand corresponding to \(\tilde{\mu}\) in \eqref{in_the_beginning} does not vanish.
Therefore, \eqref{in_the_beginning} reduces to
\begin{equation}
\label{super_concise}
\chi^{f \mapsto \lambda} (g) = (-1)^{\frac{n}{d}(d-1)} \frac{\chi^\lambda_{\tilde{\mu}}}{z_{\tilde{\mu}}} B_{\mu}^{\ell_f \alpha_d} (g).
\end{equation}
By definitions \eqref{parabolic_induction} and \eqref{B_chars}, we can rewrite \eqref{super_concise} as
\begin{equation}
\label{expandedB_eqn}
\chi^{f \mapsto \lambda} (g)
= (-1)^{\tfrac{n}{d} (d-1)} \cdot \frac{\chi^\lambda_{\tilde{\mu}}}{z_{\tilde{\mu}}} \cdot \frac{1}{\# \para_{\mu}} \sum_{\substack{x \in \GL_n \F_q \\ xgx^{-1} \in \para_{\mu}}} \prod_{i = 1}^{\ell(\mu)} P_{\mu_i}^{\ell_f \alpha_d (\mu_i)} (\pi^\mu_i (xgx^{-1})).
\end{equation}

Consider the summation in \eqref{expandedB_eqn}.
It can be rewritten as
\begin{equation}
\label{expandedB_eqn_summation}
\sum_{\substack{x \in \GL_n \F_q \\ xgx^{-1} \in \para_\mu}}
\prod_{s \geq 1}
\prod_{\{j \in \N \, : \, \mu_j = s\}}
P_s^{\ell_f \alpha_d(s)}
(\pi_j^\mu (xgx^{-1})).
\end{equation}
For each $x$ such that $xgx^{-1} \in \para_\mu$, consider the corresponding summand in \eqref{expandedB_eqn_summation}.
We repeat a similar argument to the one presented toward the end of the proof of Lemma~\ref{parabolic_counting_lemma}.
Observe that the characteristic polynomials of
\(
\{\pi_{j}^\mu (xgx^{-1})
: j \in \N, \, \mu_j = 1\}\)
have degree 1 and hence are a permutation of
\(
\{h_j : j \in \N, \, \mu_j = 1\}.
\)
Likewise, the characteristic polynomials of
\(
\{\pi_{j}^\mu (xgx^{-1}) : j \in \N, \, \mu_j = 2\}\)
have degree 2 and hence are a permutation of
\(
\{h_j : j \in \N, \, \mu_j = 2\},
\)
as there are no degree-1 factors remaining.
Continuing, we see that, for each \(s \in \N\), the degree-\(s\) characteristic polynomials of the diagonal blocks of \(xgx^{-1}\) are a permutation of the degree-\(s\) irreducible factors of the characteristic polynomial of \(g\).
Moreover, the value of each \(P_s^{\ell_f \alpha_d(s)}\) in \eqref{expandedB_eqn_summation} depends only on the characteristic polynomial of the argument.
This implies that the product of the Primary-support characters in \eqref{expandedB_eqn_summation} is constant over the sum.
Therefore,
\begin{equation}
\label{pre_near_the_end}
\chi^{f \mapsto \lambda} (g) = (-1)^{\tfrac{n}{d} (d-1)} \cdot \frac{\chi^\lambda_{\tilde{\mu}}}{z_{\tilde{\mu}}} \cdot \frac{\# \{x \in \GL_n \F_q : xyx^{-1} \in \para_\mu \} }{\# \para_{\mu}} \cdot \prod_{i = 1}^{\ell(\mu)} P_{\mu_i}^{\ell_f \alpha_d (\mu_i)} (\pi^\mu_i (g)).
\end{equation}
By Lemma \ref{parabolic_counting_lemma}, \eqref{pre_near_the_end} reduces to
\begin{equation}
\label{near_the_end}
\chi^{f \mapsto \lambda} (g) = (-1)^{\tfrac{n}{d}(d-1)} \cdot
\chi^\lambda_{\tilde{\mu}}
\cdot
\prod_{i = 1}^{\ell(\mu)}
\frac{1}{\tilde{\mu}_i}
P_{\mu_i}^{\ell_f \alpha_d (\mu_i)} (\pi^\mu_i(g)).
\end{equation}

Consider the Primary-support character evaluations in \eqref{near_the_end}.
By \eqref{who_are_my_roots} and definition \eqref{P_chars},
\begin{align}
P_{\mu_i}^{\ell_f \alpha_f (\mu_i)} (\pi^\mu_i (g))
& = \kappa(\Box,q^{\deg h_i}) \sum_{j=1}^{\deg h_i} \te(\eps_{\deg h_i}^{\ell_{h_i}})^{q^j \ell_f \alpha_d (\mu_i)} \\
& = \sum_{j = 1}^{\mu_i} \te(\eps_{\mu_i}^{\ell_{h_i}})^{q^j \ell_f  [\tilde{\mu}_i]_{q^d}}
= \sum_{\substack{\beta_i \in \F_{q^{\mu_i}} \\ h_i( \beta_i) = 0}} \te ( \beta_i )^{\ell_f [\tilde{\mu}_i]_{q^d}}. \label{very_last_step}
\end{align}
Substituting \eqref{very_last_step} into \eqref{near_the_end}
gives the result.
\end{proof}

\begin{example}
Returning to Example~\ref{runningexample}, we compute $\chi^{f_3 \mapsto (1)} (c)$ for $c \in E$.
In the notation of Theorem \ref{main_lemma_in_intro}, we have $d = 3, f = f_3, \lambda = (1), \mu = (3), \tilde{\mu} = (1), h_1 = f_3$, and $\ell_f = 1$.
The roots of $h_1$ are $\eps_3, \eps_3^2$, and $\eps_3^4$.
By Theorem \ref{main_lemma_in_intro},
\begin{align*}
\chi^{f_3 \mapsto (1)} (c)
& =
(-1)^{\frac{3}{3}(3-1)}
\cdot
\chi^{(1)}_{(1)}
\cdot
\sum_{h_1(\beta) = 0} \te(\beta) \\
& = \te(\eps_3) + \te(\eps_3^2) + \te(\eps_3^4)
= \zeta_7 + \zeta_7^2 + \zeta_7^4.
\end{align*}
We see this exact value in Table~\ref{GL3F2_char_table} above.
\end{example}

\subsection{Proof of first main result}

We can now apply Theorem~\ref{main_lemma_in_intro} to prove our main results.
We begin with Theorem~\ref{re_main}, which addresses the family of cases \(\mu = (\mu_1, \cdots, \mu_\ell) \vdash n > 2\), where \(\ell > 1\) and \(\mu_{\ell-1} > \mu_\ell = 1\).
Note that if all the parts of \(\mu\) are distinct, i.e. \(\mu_1 > \cdots > \mu_\ell\), then \(\CT_\mu^\Box (q) = \CT_\mu (q)\), and consequently \(g_{k,\mu}(q) = g_{k,\mu}^\Box (q)\).
However, even in the event that some of the parts of \(\mu\) agree, we are still able to find an explicit formula for \(g_{k,\mu}^\Box (q)\), although \(\CT_\mu^\Box (q)\) might be empty if \(q\) is small.
Recall that Theorem~\ref{re_main} states that, under the above assumptions on \(\mu\) and \(n\),
\begin{equation}
g_{k,\mu}^\Box(q)
=
\frac{\# \CT_{(n)} (q)^k
\cdot
\# \CT_\mu^\Box (q)}{\# \GL_n \F_q}
\cdot
\sum_{r=0}^{n-1}
\frac{(-1)^{rk} \chi^{(n-r,1^r)}_\mu}{\left ( q^{\binom{r+1}{2}} \cdot {\genfrac[]{0pt}{1}{n - 1}{r}}_{q} \right )^{k-1}}
\end{equation}
for all \(k \in \N\) and prime powers \(q\).

In the rest of this section and later, we will require some additional notation.
We will consider the logical propositions ``\(q-1 | \ell_f\)'' for various \(f \in \CF_1 (q)\).
Even though \(\ell_f\) denotes an arbitrary choice, these propositions are well-defined for the following reason.
Suppose for the chosen \(\ell_f\), we have \(q-1 | \ell_f\).
Now, suppose \(\ell_f'\) is another choice of \(\ell_f\).
Then there exist \(i,j \in \Z\) such that
\begin{equation}
\ell_f' = q^i \ell_f + j(q-1),
\end{equation}
which is divisible by \(q-1\) by the assumption that \(q-1 | \ell_f\).

\begin{proof}[Proof of Theorem~\ref{re_main}]
By Corollary~\ref{simpler_ff_in_situ_corollary}, we have
\[
g_{k,\mu}^\Box (q)
=
\frac{1}{\gamma_n(q)}
\sum_{d | n}
\sum_{r = 0}^{\tfrac{n}{d} - 1}
\deg_{n,d,r}(q)^{1-k}
\sum_{f \in \CF_d (q)}
\left (
\sum_{g \in \CT_{(n)} (q)} \chi^{f,r} (g)
\right )^k
\left (
\sum_{h \in \CT^\Box_\mu (q)} \chi^{f,r} (h)
\right ).
\]
Applying Theorem~\ref{main_lemma_in_intro}, we see that \(\chi^{f,r}\) vanishes on \(\CT_\mu^\Box (q)\) unless \(d = 1\).
Therefore,
\begin{equation}
\label{nepenthe}
g_{k,\mu}^\Box (q)
=
\frac{1}{\gamma_n(q)}
\sum_{r = 0}^{n - 1}
\deg_{n,1,r}(q)^{1-k}
\sum_{f \in \CF_1 (q)}
\left (
\sum_{g \in \CT_{(n)} (q)} \chi^{f,r} (g)
\right )^k
\left (
\sum_{h \in \CT_\mu^\Box (q)} \chi^{f,r} (h)
\right ).
\end{equation}

We proceed to show that only the \(f(z) = z-1\) term does not vanish in the sum over \(f \in \CF_1(q)\).
Consider an individual polynomial \(f \in \CF_1 (q)\).
Recall from Section~\ref{indexing} that the conjugacy classes in \(\CT_\mu^\Box (q)\) have equals sizes and each conjugacy class is uniquely determined by a set \(\{h_1,\ldots,h_\ell\}\) of distinct polynomials such that \(h_i \in \CF_{\mu_i} (q)\) for each \(i \in \{1,\ldots,\ell\}\).
Thus,
by Theorem~\ref{main_lemma_in_intro},
\(\sum_{h \in \CT_\mu^\Box (q)} \chi^{f,r} (h)\) is a multiple of
\begin{equation}
\label{ada_cafe}
\sum_{\substack{\{h_1,\ldots,h_\ell\} \\ h_i \in \CF_{\mu_i} (q)}}
\prod_{i=1}^\ell
\sum_{\substack{\beta_i \in \F_{q^{\mu_i}} \\ h_i(\beta_i) = 0}} \te(\beta_i)^{\ell_f [\mu_i]_q}.
\end{equation}
Since \(\mu_\ell\) is the only part of \(\mu\) equal to \(1\), we have that \eqref{ada_cafe} factors as
\begin{equation}
\label{ada_cafe_2}
\left (
\sum_{\substack{\{h_1,\ldots,h_{\ell-1}\} \\ h_i \in \CF_{\mu_i} (q)}}
\prod_{i=1}^{\ell-1}
\sum_{\substack{\beta_i \in \F_{q^{\mu_i}} \\ h_i(\beta_i) = 0}} \te(\beta_i)^{\ell_f [\mu_i]_q}
\right )
\cdot
\left (
\sum_{h_\ell \in \CF_{1} (q)}
\sum_{\substack{\beta_\ell \in \F_{q} \\ h_\ell(\beta_\ell) = 0}} \te(\beta_\ell)^{\ell_f}
\right ).
\end{equation}
The latter factor in \eqref{ada_cafe_2} is
\[
\sum_{h_\ell \in \CF_1 (q)} \sum_{\substack{\beta_\ell \in \F_q \\ h_\ell(\beta_\ell) = 0}} \te(\beta_\ell)^{\ell_f}
=
\begin{cases}
0, & q-1 \nmid \ell_f, \\
q-1, & q-1 \mid \ell_f,
\end{cases}
\]
because, by Corollary~\ref{te_n_map_property}, \(\te(\beta_\ell)\) ranges over all \((q-1)^\text{th}\) roots of unity.
Since \(\deg f = d = 1\), we can take \(\ell_f \in \{1,\ldots,q-1\}\).
Thus, the only non-zero contribution to the sum over \(f \in \CF_1(q)\) in \eqref{nepenthe} comes from the term corresponding to \(\ell_f = q-1\) and hence \(f(z) = z-1\).

Eliminating the vanishing terms not corresponding to \(f(z) = z-1\) in \eqref{nepenthe} gives
\[
g_{k,\mu}^\Box (q)
=
\frac{1}{\gamma_n(q)}
\sum_{r = 0}^{n-1}
\deg_{n,1,r}(q)^{1-k}
\left (
\sum_{g \in \CT_{(n)} (q)} \chi^{z-1,r} (g)
\right )^k
\left (
\sum_{h \in \CT_\mu^\Box (q)} \chi^{z-1,r} (h)
\right ).
\]
By
Corollary~\ref{MNrule_ncycle}
and
Theorem~\ref{sn_stuff},
\begin{align*}
g \in \CT_{(n)} (q) & \implies \chi^{z-1,r}(g) = (-1)^r, \quad \quad \text{and}\\
h \in \CT_\mu^\Box (q)   & \implies \chi^{z-1,r}(h) = \chi^{(n-r,1^r)}_\mu.
\end{align*}
Since both character values only depend on \(r\), we have
\[
g_{k,\mu}^\Box (q)
=
\frac{1}{\gamma_n(q)}
\sum_{r = 0}^{n-1}
\deg_{n,1,r}(q)^{1-k}
\left (
\# \CT_{(n)} (q) (-1)^r
\right )^k
\left (
\# \CT_\mu^\Box (q) \chi^{(n-r,1^r)}_\mu
\right ),
\]
which simplifies to the result, using the notation from Table~\ref{formula_table}.
\end{proof}

Next is the case of \(\mu = (n-1,1)\), which is addressed by Corollary~\ref{n_minus_1_corollary}.
In this case, \(g_{k,(n-1,1)} (q)\) equals the number of \(k\)-tuples of regular elliptic elements whose product has exactly one eigenvalue in \(\F_q\) and acts as a regular elliptic element on an \((n-1)\)-dimensional subspace of \(V\).
Recall that Corollary~\ref{n_minus_1_corollary} states that
\begin{equation}
g_{k,(n-1,1)} (q)
=
\frac{\# \CT_{(n)} (q)^k
\cdot
\# \CT_{(n-1,1)} (q)}{\# \GL_n \F_q}
\cdot
\left (
1 + \frac{(-1)^{nk-n-k}}{q^{\binom{n}{2}(k-1)}}
\right )
\end{equation}
for all \(n > 2\), \(k \in \N\), and prime powers \(q\).

\begin{proof}[Proof of Corollary~\ref{n_minus_1_corollary}]
Apply Corollary~\ref{MNrule_ncycle} to Theorem~\ref{re_main},
observing that \(\chi^{(n-r,1^r)}_{(n-1,1)}\) is only nonzero when \(r \in \{0,n-1\}\).
\end{proof}

\subsection{Proof of second main result}
\label{nu_equals_n_section}

Our second main result, Theorem~\ref{nu_n_part}, addresses the case \(\mu = (n)\).
In this case, the quantity \(g_{k,(n)} (q)\) equals the number of \(k\)-tuples of regular elliptic elements whose product is also regular elliptic.
Recall the notation and definitions from Table~\ref{formula_table}, and recall that Theorem~\ref{nu_n_part} states that
\begin{equation}
g_{k,(n)}(q) = P_{n,k+1} (q)
\sum_{d | n}
(-1)^{n(k+1)/d}
d^{k}
D_{n,k+1,d} (q)
\sum_{c | d}
\moebius(d/c)
C_{n,k+1,c} (q)
\end{equation}
for all \(n, k \in \N\) and prime powers \(q\).

\begin{proof}[Proof of Theorem~\ref{nu_n_part}.]
By Corollary~\ref{simpler_ff_in_situ_corollary} and the fact that \(\mu = (n)\), we have
\begin{equation}
\label{first}
g_{k,\mu} (q)
=
\frac{1}{\gamma_n(q)}
\sum_{d | n}
\sum_{r=0}^{\tfrac{n}{d}-1}
\deg_{n,d,r}(q)^{1-k}
\sum_{f \in \CF_d (q)}
\left ( \sum_{g \in \CT_{(n)} (q)} \chi^{f,r}(g) \right )^{k+1}.
\end{equation}
By Theorem~\ref{cc_sizes}, the size of each conjugacy class comprising \(\CT_{(n)} (q)\) is \(\gamma_n(q) / (q^n-1)\).
Since characters are constant on conjugacy classes, we have
\begin{equation}
\sum_{g \in \CT_{(n)} (q)}
\chi^{f,r} (g)
=
\frac{\gamma_n(q)}{q^n-1} \sum_{p \in \CF_n (q)} \chi^{f,r}(g_p),
\label{cc_char_sum_reduc}
\end{equation}
where \(g_p\) denotes an arbitrary regular elliptic element with characteristic polynomial \(p\).
Substituting \eqref{cc_char_sum_reduc} into \eqref{first}, we have
\begin{equation}
\label{almost_there}
g_{k,\mu} (q)
=
\frac{1}{\gamma_n(q) }
\left ( \frac{\gamma_n(q)}{q^n-1} \right )^{k+1}
\sum_{d | n}
\sum_{r = 0}^{\tfrac{n}{d} - 1} \deg_{n,d,r}(q)^{1-k} \sum_{f \in \CF_d (q)} \left ( \sum_{p \in \CF_n (q)} \chi^{f,r}(g_p) \right )^{k+1}.
\end{equation}
Observe that \(\CT_{(n)} (q) = \CT_{(n)}^\Box (q)\), so we can evaluate primary characters on \(\CT_{(n)} (q)\) using Theorem~\ref{main_lemma_in_intro}.
Applying Theorem~\ref{main_lemma_in_intro} and Corollary~\ref{MNrule_ncycle} to \eqref{almost_there} gives
\begin{align*}
g_{k,\mu} (q)
& =
\frac{1}{\gamma_n(q) }
\left ( \frac{(-1)^n \gamma_n(q)}{n(q^n-1)} \right )^{k+1}
\sum_{d | n} ((-1)^{n/d} d)^{k+1}
\sum_{r = 0}^{\tfrac{n}{d} - 1} (-1)^{r(k+1)} \deg_{n,d,r}(q)^{1-k} \\
& \quad \quad \quad \times
\sum_{f \in \CF_d (q)} \left ( \sum_{p \in \CF_n (q)} \sum_{\substack{\alpha \in \F_{q^n}^\times \\ p(\alpha) = 0}}
\te(\alpha)^{\ell_f [n/d]_{q^d}} \right )^{k+1}.
\end{align*}
Using the notation established in Table~\ref{formula_table}, this is equivalent to
\begin{equation}
\frac{g_{k,\mu} (q)}{P_{n,k+1}(q)}
=
\sum_{d | n} ((-1)^{n/d} d)^{k+1}
D_{n,k+1,d}(q)
\sum_{f \in \CF_d (q)} \left ( \sum_{p \in \CF_n (q)} \sum_{\substack{\alpha \in \F_{q^n}^\times \\ p(\alpha) = 0}}
\te(\alpha)^{\ell_f [n/d]_{q^d}} \right )^{k+1}.
\end{equation}
Theorem~\ref{nu_n_part} now follows from Corollary~\ref{pre_main} below, which we phrase in terms of \(k\) rather than \(k+1\) for the sake of simplifying the expressions.
\end{proof}

Before proving Corollary~\ref{pre_main}, we prove two lemmas.
Given a logical proposition \(\CP\), let \(\delta_\CP\) equal \(1\) if \(\CP\) is true and \(0\) if \(\CP\) is false.
We will make logical propositions of the form ``\(b | \ell_f [n/d]_{q^d}\)'' or equivalently ``\(b\) divides \(\ell_f [n/d]_{q^d}\)'', where \(d | n\), \(f \in \CF_d (q)\), and \(b\) is a number that divides \(q^n-1\).
These are not of the same form as ``\(q-1 | \ell_f\),'' which we considered earlier.
However, they are still well-defined for the following reason.
First, \(b\) dividing an element of \(\Z / (q^n-1)\) is well-defined simply because \(b\) itself divides \(q^n-1\).
Second, suppose for the chosen \(\ell_f\), we have \(b | \ell_f [n/d]_{q^d}\), where \(d | n\) and \(b | q^n-1\).
Now, suppose \(\ell_f'\) is another choice of \(\ell_f\).
Then there exist \(i,j \in \Z\) such that
\begin{equation}
\label{well_defined_div}
\ell_f' = q^i \ell_f + j(q^d-1).
\end{equation}
Observe that \((q^d-1)[n/d]_{q^d} = q^n-1\).
Multiplying both sides of \eqref{well_defined_div} by \([n/d]_{q^d}\), we have
\begin{equation}
\ell_f'[n/d]_{q^d} = q^i \ell_f [n/d]_{q^d} + j (q^n-1),
\end{equation}
which is divisible by \(b\) by the assumption that \(b\) divides both \(\ell_f [n/d]_{q^d}\) and \(q^n-1\).


\begin{lemma}
\label{pre_pre_main}
For all \(n \in \N\), \(d | n\), prime powers \(q\), and \(f \in \CF_d (q)\),
\begin{equation}
\sum_{p \in \CF_n (q)}
\sum_{\substack{\alpha \in \F_{q^n}^\times \\ p(\alpha) = 0}}
\te(\alpha)^{\ell_f [n/d]_{q^d}}
=
\sum_{s | n} \moebius(n/s) (q^s-1) \delta_{q^s - 1 | \ell_f [n/d]_{q^d}}.
\end{equation}
\end{lemma}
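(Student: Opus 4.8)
The plan is to recognize the left-hand side as a character sum over the set of elements of $\F_{q^n}^\times$ of exact degree $n$ over $\F_q$, and then to isolate that sum by M\"obius inversion over the divisor poset of $n$. Throughout, abbreviate $M = \ell_f[n/d]_{q^d} \in \Z$ and, for each $m \mid n$, let $R_m$ denote the union of the root sets of the polynomials in $\CF_m(q)$; since finite fields are perfect each polynomial in $\CF_m(q)$ has $m$ distinct roots, so $R_m$ consists of exactly $m\cdot\#\CF_m(q)$ distinct elements, namely those $\alpha$ whose minimal polynomial over $\F_q$ has degree exactly $m$, and these lie in $\F_{q^m}^\times$. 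Applying Lemma~\ref{root_union} with its role of $d$ played by each $m \mid n$ gives the disjoint decompositions $\F_{q^m}^\times = \bigsqcup_{c \mid m} R_c$. Taking $m = n$ and noting that the roots of a $p \in \CF_n(q)$ automatically lie in $\F_{q^n}^\times$, we see that the left-hand side of the lemma is precisely $\sum_{\alpha \in R_n} \te(\alpha)^M$.

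Next I would evaluate, for each $m \mid n$, the full sum $\Sigma(m) := \sum_{\alpha \in \F_{q^m}^\times} \te(\alpha)^M$ in two ways. On one hand, $\te$ restricts to a group isomorphism from $\F_{q^m}^\times$ onto the group of $(q^m-1)$-th roots of unity, so $\Sigma(m) = \sum_{\xi^{\,q^m-1}=1} \xi^M$, which is the elementary geometric sum equal to $q^m-1$ when $(q^m-1) \mid M$ and to $0$ otherwise; here one uses that ``$q^m-1 \mid M$'' is a well-posed condition, as explained in the paragraph preceding the lemma. On the other hand, the decomposition $\F_{q^m}^\times = \bigsqcup_{c \mid m} R_c$ gives $\Sigma(m) = \sum_{c \mid m} \rho(c)$, where $\rho(c) := \sum_{\alpha \in R_c} \te(\alpha)^M$.

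Finally, M\"obius inversion applied to the family of identities $\Sigma(m) = \sum_{c \mid m} \rho(c)$ yields $\rho(n) = \sum_{s \mid n} \moebius(n/s)\,\Sigma(s) = \sum_{s \mid n} \moebius(n/s)\,(q^s-1)\,\delta_{q^s-1 \,\mid\, M}$, and since $\rho(n)$ is exactly the left-hand side of the lemma, substituting $M = \ell_f[n/d]_{q^d}$ back in completes the proof. I do not expect a genuine obstacle; the only care required is bookkeeping: confirming via Lemma~\ref{root_union} that the unions of root sets are disjoint and exhaust each $\F_{q^m}^\times$, and observing that although $M$ is defined only up to the ambiguity in $\ell_f$ (multiplication by a power of $q$ and addition of a multiple of $q^d-1$, which changes $M$ by multiplication by a power of $q$ and addition of a multiple of $q^n-1$ since $(q^d-1)[n/d]_{q^d}=q^n-1$), the quantity $\sum_{\alpha \in R_n}\te(\alpha)^M$ is unaffected — such a change simply permutes $R_n$ by a power of the Frobenius — while the right-hand side is unaffected by the well-posedness of the divisibility conditions.
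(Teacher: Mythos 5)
Your proof is correct and follows the same strategy as the paper: decompose each $\F_{q^m}^\times$ as a disjoint union of root sets of irreducible polynomials (the paper invokes Corollary~\ref{theta_of_root_union}, you invoke Lemma~\ref{root_union} directly), evaluate the full sum over $\F_{q^m}^\times$ as a geometric sum of roots of unity, and apply M\"obius inversion over the divisor lattice of $n$. If anything, your write-up is slightly more explicit than the paper's, which only displays the $m=n$ instance of the divisor-sum identity before invoking M\"obius inversion; your version names $\rho(c)$ and $\Sigma(m)$ and states the full family of identities, which makes the inversion step clearer.
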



\begin{proof}
By M\"obius inversion, it suffices to prove
\begin{equation}
\label{inverted}
\sum_{s | n}
\sum_{p \in \CF_s (q)}
\sum_{\substack{\alpha \in \F_{q^s}^\times \\ p(\alpha) = 0}}
\te (\alpha)^{\ell_f [n/d]_{q^d}}
=
(q^n - 1) \delta_{q^n - 1 | \ell_f [n/d]_{q^d}}.
\end{equation}
Corollary~\ref{theta_of_root_union} implies that, on the left side of~\eqref{inverted}, \(\te(\alpha)\) ranges precisely over all \((q^n-1)^\text{th}\) roots of unity.
The sum of the \((\ell_f [n/d]_{q^d})^\text{th}\) powers of all \((q^n-1)^\text{th}\) roots of unity is zero unless \(q^n-1\) divides \(\ell_f [n/d]_{q^d}\), in which case the sum is \(q^n-1\).
\end{proof}


\begin{lemma}
\label{post_pre_pre_main}
For all \(n \in \N, \, d | n\), prime powers \(q\), and \(b | q^n-1\),
\begin{equation}
\# \{f \in \CF_d (q) : \, \, b | \ell_f [n/d]_{q^d}\}
=
\frac{1}{d}
\sum_{c | d}
\moebius(d/c)
\frac{q^n-1}{\lcm \big (
[n/c]_{q^c}, b
\big )}
\end{equation}
\end{lemma}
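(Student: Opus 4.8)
The plan is to count roots rather than polynomials. Fix $n$, $d \mid n$, a prime power $q$, and $b \mid q^n-1$. Every $f \in \CF_d(q)$ has exactly $d$ roots in $\F_{q^d}^\times$, all of degree $d$ over $\F_q$, and by Lemma~\ref{root_union} every element of $\F_{q^d}^\times$ arises in this way for a unique $f \in \CF_c(q)$ with $c \mid d$, its degree over $\F_q$ being $c$. Under the map $\te_n$, which sends $\eps_d^{\ell}$ to $\ell\,[n/d]_{q^d}$, the hypothesis ``$b \mid \ell_f\,[n/d]_{q^d}$'' says precisely that $b$ divides $\te_n(\alpha)$ for one root $\alpha$ of $f$ --- equivalently, by the well-definedness discussion preceding the lemma, for every root of $f$. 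So if we set
\[
a_c = \#\{\alpha \in \F_{q^c}^\times : \alpha \text{ has degree } c \text{ over } \F_q,\ b \mid \te_n(\alpha)\},
\]
then $d \cdot \#\{f \in \CF_d(q) : b \mid \ell_f\,[n/d]_{q^d}\} = a_d$, and the disjoint decomposition $\F_{q^e}^\times = \bigsqcup_{c \mid e}\{\alpha : \deg_{\F_q}\alpha = c\}$ gives, for every $e \mid d$,
\[
\#\{\alpha \in \F_{q^e}^\times : b \mid \te_n(\alpha)\} = \sum_{c \mid e} a_c .
\]

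The crux is to evaluate the left-hand side above. I would argue that $\te_n$ embeds $\F_{q^e}^\times$ as the unique cyclic subgroup of $\Z/(q^n-1)$ of order $q^e-1$, namely the subgroup of multiples of $[n/e]_{q^e}$ --- here one uses $(q^e-1)\,[n/e]_{q^e} = q^n-1$ and Corollary~\ref{te_n_map_property} --- while $\{m : b \mid m\}$ is the subgroup of multiples of $b$. In a cyclic group of order $N = q^n-1$, the subgroup of multiples of $v_1$ meets the subgroup of multiples of $v_2$ (for $v_1, v_2 \mid N$) in the subgroup of multiples of $\lcm(v_1,v_2)$, which has $N/\lcm(v_1,v_2)$ elements; applying this with $v_1 = [n/e]_{q^e}$ and $v_2 = b$ yields
\[
\sum_{c \mid e} a_c = \frac{q^n-1}{\lcm\!\big([n/e]_{q^e},\, b\big)} \qquad \text{for every } e \mid d .
\]
M\"obius inversion over the divisor lattice of $d$ then gives $a_d = \sum_{c \mid d}\moebius(d/c)\,\dfrac{q^n-1}{\lcm([n/c]_{q^c},\, b)}$, and dividing by $d$ produces the claimed identity.

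The main obstacle is the cyclic-group computation in the second paragraph: correctly identifying the image of $\F_{q^e}^\times$ under $\te_n$ with the order-$(q^e-1)$ subgroup of $\Z/(q^n-1)$, and computing its intersection with the multiples of $b$. This is elementary, but it is exactly where the $\lcm$ appearing in the statement enters, so it must be done with care; everything else --- the root--polynomial bookkeeping and the well-definedness of the divisibility condition --- is either already established in the excerpt or immediate.
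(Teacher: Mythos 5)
Your argument is correct and is essentially the same as the paper's: both convert the count of polynomials into a count of roots, identify the relevant roots via \(\te_n\) with the subgroup of multiples of \([n/e]_{q^e}\) in \(\Z/(q^n-1)\) (Corollary~\ref{te_n_map_property}), intersect with the multiples of \(b\) to get the \(\lcm\) denominator, and finish with M\"obius inversion over the divisor lattice of \(d\). The paper applies M\"obius inversion at the outset to reduce to a single identity, whereas you introduce the auxiliary counts \(a_c\) and invert at the end, but this is purely organizational.
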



\begin{proof}
By M\"obius inversion, it suffices to prove
\begin{equation}
\label{inverted_2}
\sum_{c | d} c \cdot \# \{f \in \CF_c (q) : \, \, b | \ell_f [n/c]_{q^c} \}
=
\frac{q^n-1}{\lcm \big (
[n/d]_{q^d}, b
\big )}.
\end{equation}
View each value \(\ell_f [n/c]_{q^c}\) as an element of \(\Z / (q^n-1)\), as in the context of Corollary~\ref{te_n_map_property}.
Modulo \(q^n-1\), there are exactly \(c\) distinct choices for each \(\ell_f [n/c]_{q^c}\).
Namely, given a choice for \(\ell_f\),
\[
\ell_f , q\ell_f , q^2\ell_f , \ldots, q^{c-1}\ell_f \quad \mod q^n-1
\]
are also valid choices for \(\ell_f\), and so
\[
\ell_f [n/c]_{q^c}, q\ell_f[n/c]_{q^c} , q^2\ell_f[n/c]_{q^c} , \ldots, q^{c-1}\ell_f[n/c]_{q^c} \quad \mod q^n-1
\]
are all the possible choices for \(\ell_f [n/c]_{q^c}\) in \(\Z / (q^n-1)\).
Recalling definition~\eqref{te_n_def}, we see that those values are precisely the images under \(\te_n\) of the roots of \(f\).
Thus, another way to interpret the sum on the left side of \eqref{inverted_2} is
\[
\sum_{c | d} \sum_{f \in \CF_c (q)} \sum_{\substack{\alpha \in \F_{q^c} \\ f(\alpha) = 0}} \delta_{b | \te_n(\alpha)}.
\]
Therefore, by Corollary~\ref{te_n_map_property}, the left side of \eqref{inverted_2} counts the elements of \(\Z / (q^n-1)\) that are divisible by both \([n/d]_{q^d}\) and \(b\),
which is exactly the right side of \eqref{inverted_2}.
\end{proof}


\begin{corollary}[to Lem.~\ref{pre_pre_main} and Lem.~\ref{post_pre_pre_main}]
\label{pre_main}
For all \(n,k \in \N, \, d | n\), and prime powers \(q\),
\[
\sum_{f \in \CF_d (q)}
\left (
\sum_{p \in \CF_n (q)}
\sum_{\substack{\alpha \in \F_{q^n}^\times \\ p(\alpha) = 0}}
\te(\alpha)^{\ell_f [n/d]_{q^d}}
\right )^k
=
\frac{1}{d} \sum_{c | d} \moebius(d/c) C_{n,k,c}(q).
\]
\end{corollary}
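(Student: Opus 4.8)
The plan is to feed Lemma~\ref{pre_pre_main} into the inner sum, expand the $k$-th power as a sum over $k$-tuples of divisors of $n$, collapse the resulting product of divisibility indicators into a single indicator via a least common multiple, and then apply Lemma~\ref{post_pre_pre_main} to carry out the remaining sum over $f \in \CF_d(q)$.

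First I would apply Lemma~\ref{pre_pre_main} to write the expression inside the $k$-th power as $\sum_{s \mid n} \moebius(n/s)(q^s-1)\,\delta_{q^s-1 \,\mid\, \ell_f [n/d]_{q^d}}$. Raising this to the $k$-th power and multiplying out gives
\[
\left( \sum_{p \in \CF_n(q)} \sum_{\substack{\alpha \in \F_{q^n}^\times \\ p(\alpha)=0}} \te(\alpha)^{\ell_f [n/d]_{q^d}} \right)^{k}
=
\sum_{s_1,\ldots,s_k \mid n} \left( \prod_{i=1}^k \moebius(n/s_i)(q^{s_i}-1) \right) \prod_{i=1}^k \delta_{q^{s_i}-1 \,\mid\, \ell_f [n/d]_{q^d}}.
\]
The key simplification is that, for a fixed integer $m$, one has $\prod_{i=1}^k \delta_{q^{s_i}-1 \,\mid\, m} = \delta_{b \,\mid\, m}$ where $b = \lcm(q^{s_1}-1,\ldots,q^{s_k}-1)$, since $m$ is divisible by all of the $q^{s_i}-1$ exactly when it is divisible by their least common multiple. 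Because each $s_i \mid n$, we have $q^{s_i}-1 \mid q^n-1$ and hence $b \mid q^n-1$; this is precisely the hypothesis required to invoke Lemma~\ref{post_pre_pre_main}.

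Next I would interchange the finite sum over tuples $(s_1,\ldots,s_k)$ with the sum over $f \in \CF_d(q)$. For a fixed tuple, summing the indicator over $f$ gives $\#\{f \in \CF_d(q) : b \mid \ell_f [n/d]_{q^d}\}$, which Lemma~\ref{post_pre_pre_main} evaluates as $\frac{1}{d}\sum_{c \mid d}\moebius(d/c)\,\frac{q^n-1}{\lcm([n/c]_{q^c},\, b)}$. Using the identity $(q^c-1)[n/c]_{q^c} = q^n-1$, so that $[n/c]_{q^c} = (q^n-1)/(q^c-1)$, and recalling $b = \lcm(q^{s_1}-1,\ldots,q^{s_k}-1)$, the denominator becomes exactly $\lcm\!\left(\tfrac{q^n-1}{q^c-1},\, q^{s_1}-1,\ldots,q^{s_k}-1\right)$. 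Pulling the sum over $c$ to the front and comparing with the definition of $C_{n,k,c}(q)$ in Table~\ref{formula_table} produces the claimed identity $\sum_{f \in \CF_d(q)}(\cdots)^k = \frac{1}{d}\sum_{c \mid d}\moebius(d/c)\,C_{n,k,c}(q)$.

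The argument is mostly bookkeeping; the only genuine content beyond rearranging finite sums is the collapse of the product of divisibility indicators into a single lcm-indicator, together with the observation that the resulting modulus $b$ divides $q^n-1$ so that Lemma~\ref{post_pre_pre_main} is applicable. A secondary point to keep straight is which least common multiples are taken in $\Z$ versus in $\Z/(q^n-1)$, as flagged in the discussion preceding Table~\ref{formula_table}; here the relevant lcm's are all computed in $\Z$. I expect that indicator-to-lcm step to be the main (and essentially the only) obstacle.
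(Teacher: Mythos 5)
Your proposal is correct and follows essentially the same route as the paper's proof: apply Lemma~\ref{pre_pre_main}, expand the $k$th power over tuples $(s_1,\ldots,s_k)$, collapse the product of divisibility indicators into a single lcm-indicator, invoke Lemma~\ref{post_pre_pre_main}, and match terms with the definition of $C_{n,k,c}(q)$. The only difference is cosmetic — you spell out the indicator-to-lcm collapse and the substitution $[n/c]_{q^c} = (q^n-1)/(q^c-1)$, both of which the paper leaves implicit.
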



\begin{proof}
Using Lemma~\ref{pre_pre_main} and expanding the \(k^\text{th}\) power in the statement, it remains to show
\[
\sum_{s_1,\ldots,s_k | n}
\# \{f \in \CF_d (q) : \lcm(q^{s_1}-1,\ldots,q^{s_k}-1) | \ell_f [n/d]_{q^d}\}
\cdot
\prod_{i=1}^k \moebius(n/s_i)(q^{s_i}-1)
\]
equals
\[
\frac{1}{d} \sum_{c | d} \moebius(d/c) C_{n,k,c} (q).
\]
By the definition of \(C_{n,k,c} (q)\),
this is equivalent to showing that
\[
\# \{f \in \CF_d (q) : \lcm(q^{s_1}-1,\ldots,q^{s_k}-1) | \ell_f [n/d]_{q^d} \}
\]
equals
\[
\frac{1}{d}
\sum_{c | d}
\moebius(d/c)
\frac{q^n-1}{\lcm \big ( [n/c]_{q^c}, q^{s_1}-1, \ldots, q^{s_k}-1 \big )}.
\]
This follows from Lemma~\ref{post_pre_pre_main}.
\end{proof}

\section{Probabilistic result}
\label{probabilistic_section}

\subsection{Prerequisite lemmas}

We require some lemmas before proving Theorem~\ref{intro_version_p}, and we introduce more notation to do so.
For a complex number \(\alpha\), let \(\bar{\alpha}\) denote the complex conjugate of \(\alpha\), and let \(\| \alpha \| = \sqrt{\alpha \bar{\alpha}} \in [0,\infty)\) denote the usual \myemph{norm} of \(\alpha\).
Define the function \(D : \N \to \N \cup \{0\}\) by
\begin{equation}
D(n) =
\begin{cases}
0 & \text{ if } n = 1, \\
\max \{ s \in \N : s | n \text{ and } s < n\} & \text{ if } n > 1.
\end{cases}
\end{equation}
In other words, \(D(n)\) is the largest proper divisor of \(n\), unless \(n = 1\), and \(D(1) = 0\).
Note that \(D(n) \leq n/2\) for all \(n \in \N\).
In this section, we also make use of \myemph{big \(O\) notation}.
Recall that if \(S\) is an infinite subset of \(\N\) and \(f,g : S \to [0,\infty)\), then we write \(f = O(g)\) to denote that there exist \(m \in [0,\infty)\) and \(s_0 \in S\) such that \(f(s) \leq m \cdot g(s)\) for all \(s > s_0\).
In other words, \(f\) is big \(O\) of \(g\) if some constant multiple of \(g(s)\) is an upper bound on \(f(s)\) for all sufficiently large \(s \in S\).

\begin{lemma}
\label{degree_limit_lemma}
For all \(n \in \N, \, d | n\), and \(r \in \{0,\ldots,\tfrac{n}{d}-1\}\), we have
\begin{equation}
\frac{1}{\deg_{n,d,r} (q)}
=
O \left (
q^{-e(n,d,r)}
\right ),
\end{equation}
where we define
\begin{equation}
\label{definition_of_e}
e(n,d,r)
=
d\binom{r+1}{2} + \binom{n+1}{2} - d\binom{\tfrac{n}{d} + 1}{2} + dr\left ( \tfrac{n}{d}-1-r \right ).
\end{equation}
Moreover, \(e(n,d,r)\) is positive unless \(d = 1\) and \(r = 0\), and \(e(n,1,0) = 0\).
\end{lemma}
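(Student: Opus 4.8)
The plan is to deduce both assertions of the lemma from an explicit estimate of the $q$-degree of the polynomial $\deg_{n,d,r}(q)$. Recalling from Table~\ref{formula_table} that
\[
\deg_{n,d,r}(q)
=
q^{d\binom{r+1}{2}}
\cdot
\frac{\prod_{i=1}^n (q^i-1)}{\prod_{j=1}^{n/d}(q^{jd}-1)}
\cdot
{\genfrac[]{0pt}{1}{n/d-1}{r}}_{q^d},
\]
I would first bound this quantity below. For every prime power $q$ we have $q\ge 2$, hence $q^i-1\ge q^i/2$ and $q^{jd}-1\le q^{jd}$; moreover the Gaussian binomial ${\genfrac[]{0pt}{1}{n/d-1}{r}}_{x}$ has nonnegative coefficients and is monic of degree $r(n/d-1-r)$ in $x$, so ${\genfrac[]{0pt}{1}{n/d-1}{r}}_{q^d}\ge (q^d)^{\,r(n/d-1-r)}$. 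Multiplying these inequalities and using $\sum_{i=1}^n i=\binom{n+1}{2}$ and $\sum_{j=1}^{n/d}jd=d\binom{n/d+1}{2}$ yields $\deg_{n,d,r}(q)\ge 2^{-n}\,q^{\,e(n,d,r)}$, so that $1/\deg_{n,d,r}(q)\le 2^{n}\,q^{-e(n,d,r)}=O(q^{-e(n,d,r)})$, which is the first claim. (Alternatively one can note that $\deg_{n,d,r}(q)$ is a polynomial in $q$ of degree exactly $e(n,d,r)$ with positive leading coefficient, but the direct inequality above is self-contained.)

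For the positivity statement I would simplify $e(n,d,r)$ algebraically. Writing $m=n/d$ and expanding the binomial coefficients, one finds $\binom{n+1}{2}-d\binom{m+1}{2}=\tfrac12\,dm^2(d-1)$ and $d\binom{r+1}{2}+dr(m-1-r)=\tfrac12\,dr(2m-r-1)$, whence
\[
e(n,d,r)=\tfrac{d}{2}\Bigl(m^2(d-1)+r(2m-r-1)\Bigr),\qquad m=n/d.
\]
Since $0\le r\le m-1$ we have $2m-r-1\ge m\ge 1$, so both terms in the parentheses are nonnegative; the first vanishes exactly when $d=1$ and the second exactly when $r=0$. Therefore $e(n,d,r)\ge 0$ with equality if and only if $(d,r)=(1,0)$, and substituting $d=1,r=0$ (using $\binom{1}{2}=0$) gives $e(n,1,0)=0$, completing the proof.

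There is no serious obstacle here: the argument is entirely elementary estimates together with a binomial identity. The only points that require genuine care are getting the $q$-degree of the Gaussian binomial correct (namely $r(n/d-1-r)$ in the variable $q^d$, hence $dr(n/d-1-r)$ in $q$) and carrying out the simplification of $e(n,d,r)$ into the manifestly nonnegative factored form displayed above; everything else is bookkeeping.
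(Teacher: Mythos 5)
Your proof is correct. For the big-$O$ claim you make the same underlying observation as the paper (that $\deg_{n,d,r}(q)$ is a polynomial in $q$ of degree $e(n,d,r)$ with positive leading coefficient), but you spell out an explicit lower bound $\deg_{n,d,r}(q)\ge 2^{-n}q^{e(n,d,r)}$ via $q^i-1\ge q^i/2$ and the nonnegativity/monicity of the Gaussian binomial; the paper simply cites its rationality remark and "checking degrees," so your version is a more self-contained rendering of the same idea. For the positivity of $e(n,d,r)$, however, your route genuinely differs from the paper's. The paper treats $e(n,d,r)$ as a quadratic polynomial in $r$, locates its vertex at $r=\tfrac{n}{d}-\tfrac12$ (with negative leading coefficient $-d/2$), concludes $e$ is increasing on $\{0,\ldots,n/d-1\}$, computes the minimum $e(n,d,0)=\tfrac{n}{2}(n-\tfrac{n}{d})$, and then treats $d=1$ separately. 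You instead algebraically collapse $e(n,d,r)$ into the factored form $\tfrac{d}{2}\bigl(m^2(d-1)+r(2m-r-1)\bigr)$ with $m=n/d$, in which nonnegativity (and the precise equality case) is immediate since $2m-r-1\ge m\ge 1$ on the relevant range. Your factorization is arguably cleaner and avoids the case split on $d$; the paper's monotonicity argument gives a little extra information (that $e(n,d,r)$ increases in $r$) that is not needed for the lemma but is conceptually suggestive. Both are correct, and either would serve.
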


\begin{proof}
The first claim follows from the discussion in Remark~\ref{more_notation_and_rationality} and then checking the degrees in \(q\) of the various polynomials which comprise \(\deg_{n,d,r} (q)\).
To prove the second claim, first observe that \(e(n,d,r)\) is a quadratic polynomial in \(r\) with critical point \(r = \tfrac{n}{d} - \tfrac{1}{2}\) and leading coefficient \(-d/2\).
This implies \(e(n,d,r)\) is increasing for \(r \in \{0,\ldots,\tfrac{n}{d}-1\}\).
The minimum value of \(e(n,d,r)\) on \(\{0,\ldots,\tfrac{n}{d}-1\}\) is therefore achieved at \(r = 0\).
Observe that \(e(n,d,0) = \tfrac{n}{2}\left ( n - \tfrac{n}{d}\right )\), which is positive unless \(d = 1\).
Therefore, \(e(n,d,r) > 0\) if \(d > 1\).
In the case \(d = 1\), we have \(e(n,1,r) = -\tfrac{1}{2}r^2 + (n-\tfrac{1}{2})r\), which is positive unless \(r = 0\).
The result follows.
\end{proof}

\begin{lemma}
\label{helvetica_scenario}
For all \(n \in \N, d | n, r \in \{0,\ldots,n/d-1\}\), we have
\begin{equation}
\max_{\substack{f \in \CF_d (q) \\ f(z) \neq z-1}}
\frac{\left \| \sum_{g \in \CT_{(n)}(q)} \chi^{f,r}(g) \right \|}{\# \CT_{(n)}(q)}
=
O \left ( q^{D(n)-n} \right ).
\end{equation}
\end{lemma}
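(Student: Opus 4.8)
The plan is to convert the character sum into a concrete exponential sum over the roots of degree-$n$ irreducible polynomials and then show that its top-order ($s = n$) contribution vanishes precisely because $f \ne z-1$; everything after that is an elementary size estimate. Throughout, $n$, $d$, and $r$ are fixed, so any implied constants may depend on them. First I would reduce to roots of polynomials: since $\CT_{(n)}(q) = \CT_{(n)}^\Box(q)$ is the disjoint union over $p \in \CF_n(q)$ of the conjugacy classes indexed by $p \mapsto \Box$, each of size $\gamma_n(q)/(q^n-1)$ by Theorem~\ref{cc_sizes}, and characters are class functions,
\[
\sum_{g \in \CT_{(n)}(q)} \chi^{f,r}(g) = \frac{\gamma_n(q)}{q^n-1}\sum_{p \in \CF_n(q)} \chi^{f,r}(g_p), \qquad \# \CT_{(n)}(q) = \# \CF_n(q)\cdot\frac{\gamma_n(q)}{q^n-1},
\]
where $g_p$ is any regular elliptic element with characteristic polynomial $p$. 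Applying Theorem~\ref{main_lemma_in_intro} with $\mu = (n)$, so that $\tilde\mu = (n/d)$ and $\ell(\mu) = 1$ (which collapses the product to the single factor $1/\tilde\mu_1 = d/n$), together with the hook value $\chi^{(n/d-r,1^r)}_{(n/d)} = (-1)^r$ from Corollary~\ref{MNrule_ncycle}, gives $\chi^{f,r}(g_p) = (-1)^{\frac nd(d-1)+r}\,\frac dn\sum_{\beta:\, p(\beta)=0}\te(\beta)^{\ell_f[n/d]_{q^d}}$. Summing over $p \in \CF_n(q)$ and invoking Lemma~\ref{pre_pre_main} then yields
\[
\left\|\sum_{g \in \CT_{(n)}(q)} \chi^{f,r}(g)\right\| = \frac{\gamma_n(q)}{q^n-1}\cdot\frac dn\cdot\left\|\sum_{s \mid n}\moebius(n/s)(q^s-1)\,\delta_{q^s - 1 \mid \ell_f[n/d]_{q^d}}\right\|.
\]

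The decisive observation is that the $s = n$ term of the inner sum vanishes exactly when $f \ne z-1$. Using $q^n - 1 = (q^d-1)[n/d]_{q^d}$, the condition $q^n - 1 \mid \ell_f[n/d]_{q^d}$ is equivalent to $(q^d-1)\mid\ell_f$, hence to $\eps_d^{\ell_f} = 1$, i.e.\ to $1$ being a root of $f$; since $f$ is monic and irreducible, this forces $f(z) = z-1$. Thus for every $f \in \CF_d(q)$ with $f \ne z-1$ the $s = n$ summand is $0$, so the inner sum is supported on divisors $s \mid n$ with $s \le D(n)$, and its norm is bounded by $\sum_{s \mid n,\, s < n}(q^s-1) = O(q^{D(n)})$.

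Finally I would divide through by $\# \CT_{(n)}(q) = \# \CF_n(q)\cdot\gamma_n(q)/(q^n-1)$: the factor $\gamma_n(q)/(q^n-1)$ cancels, leaving a bound of the shape $(d/n)\cdot O(q^{D(n)})/\#\CF_n(q)$ that does not depend on $f$. By \eqref{size_of_Fm}, $\#\CF_n(q) = \tfrac1n\sum_{s \mid n}\moebius(n/s)(q^s-1) = \tfrac1n q^n + O(q^{D(n)})$, so $1/\#\CF_n(q) = O(q^{-n})$ and the whole expression is $O(q^{D(n)-n})$; since the estimate is uniform in $f$, the maximum over $\{f \in \CF_d(q) : f \ne z-1\}$ satisfies the same bound. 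I expect the only genuine friction to lie in the first step — keeping straight the sign $(-1)^{\frac nd(d-1)+r}$, the factor $d/n$, the collapse of the product in Theorem~\ref{main_lemma_in_intro} when $\ell(\mu)=1$, and the divisibility conventions underlying Lemma~\ref{pre_pre_main} — whereas the vanishing of the $s=n$ term and the subsequent size estimate are routine.
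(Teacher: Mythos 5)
Your proof is correct and follows essentially the same route as the paper: the paper's proof packages the chain (class sizes $\gamma_n(q)/(q^n-1)$ from Theorem~\ref{cc_sizes}, Theorem~\ref{main_lemma_in_intro} with $\ell(\mu)=1$, the hook value from Corollary~\ref{MNrule_ncycle}, $\#\CF_n(q)$ from \eqref{size_of_Fm}, and Lemma~\ref{pre_pre_main}) into a single cited identity \eqref{final_row}, which is exactly the expression you derive step by step, and then deduces $O(q^{D(n)-n})$ from the same two observations — that $q^n-1\mid\ell_f[n/d]_{q^d}$ forces $f(z)=z-1$, and that the remaining terms of the inner sum are a degree-$D(n)$ polynomial while the denominator has degree $n$. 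The only cosmetic difference is that you phrase the vanishing condition via $(q^d-1)\mid\ell_f$ rather than the paper's normalization $\ell_f\le q^d-1$; these are equivalent.
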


\begin{proof}
Applying Theorem~\ref{main_lemma_in_intro}, Corollary \ref{MNrule_ncycle}, \eqref{size_of_Fm}, Corollary \ref{rss_cc_and_cycle_type_sizes}, and Lemma \ref{pre_pre_main}, we have
\begin{equation}
\label{final_row}
\frac{
1
}{
\# \CT_{(n)} (q)
}
\left \|
\sum_{g \in \CT_{(n)} (q)} \chi^{f,r} (g)
\right \|
=
\frac{d \cdot \left | \sum_{s | n} \moebius(n/s)(q^s-1) \delta_{q^s - 1 | \ell_{f} [n/d]_{q^d}} \right | }{\sum_{s | n} \moebius(n/s)(q^s-1)}
\end{equation}
for all prime powers \(q\) and \(f \in \CF_d (q)\).
The denominator on the right side of \eqref{final_row} is a degree-\(n\) polynomial in \(q\), independent of \(f\).
However, the numerator on the right side of \eqref{final_row} is not necessarily a polynomial in \(q\) at all, as it also depends on \(\ell_{f}\) which can vary with \(q\), and the sum is inside of an absolute value.
Fortunately, if \(q^n - 1\) does not divide \(\ell_{f} [n/d]_{q^d}\), then
\begin{equation}
\label{D_n_bound}
\begin{aligned}
\left | \sum_{s | n} \moebius(n/s) (q^s-1) \delta_{q^s-1 | \ell_f [n/d]_{q^d}} \right |
& \leq
\sum_{\substack{s | n \\ s < n}}
\left | \moebius (n/s) (q^s-1) \right | \\
& \leq
\sum_{\substack{s | n \\ s < n}}
q^s
<
1 + \sum_{\substack{s | n \\ s < n}}
q^s.
\end{aligned}
\end{equation}
Therefore,
\begin{equation}
\label{final_row_2}
\frac{
1
}{
\# \CT_{(n)} (q)
}
\left \|
\sum_{g \in \CT_{(n)} (q)} \chi^{f,r} (g)
\right \|
\leq
d \cdot \frac{1 + \sum_{s | n, \, s< n} q^s }{\sum_{s | n} \moebius(n/s)(q^s-1)}.
\end{equation}
Observe that \(1 + \sum_{s | n, \, s < n} q^s\) is a degree-\(D(n)\) polynomial in \(q\), and the right side of \eqref{final_row_2} is independent of \(f\).
Moreover, the condition \(q^n-1 \nmid \ell_f [n/d]_{q^d}\) is equivalent to \(f(z) \neq z-1\) because we can assume \(\ell_f \leq q^d-1\), which shows
\[
q^n-1 \mid \ell_f[n/d]_{q^d}
\implies \ell_f = q^d-1
\implies f(1) = 0
\implies f(z) = z-1.
\]
The result now follows from computing the maximum of \eqref{final_row_2} over \mbox{\(f \in \CF_d(q) \setminus \{z-1\}\).}
\end{proof}

\begin{lemma}
\label{char_sum_norm_lemma}
For all \(n \in \N, \, \mu \vdash n, \, d | n\), \(r \in \{0,\ldots,\tfrac{n}{d}-1\}\), we have
\begin{equation}
\max_{f \in \CF_d(q)}
\frac{\left \| \sum_{g \in \CT^\Box_\mu(q)} \chi^{f,r}(g) \right \|}{\gamma_n(q)}
=
O(1).
\end{equation}
\end{lemma}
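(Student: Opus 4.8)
The plan is to bound $\|\chi^{f,r}(g)\|$ by a constant, uniformly over $g \in \CT_\mu^\Box(q)$, over $f \in \CF_d(q)$, and over all prime powers $q$, and then to conclude from the trivial inequality $\#\CT_\mu^\Box(q) \le \#\GL_n\F_q = \gamma_n(q)$. First I would dispose of the degenerate case: by Theorem~\ref{main_lemma_in_intro}, if some part of $\mu$ is not divisible by $d$, then $\chi^{f,r}$ vanishes identically on $\CT_\mu^\Box(q)$, so the left-hand side is $0$ and there is nothing to prove. Hence assume $\mu = d\tilde\mu$ for some $\tilde\mu \vdash n/d$.

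The key point is that Theorem~\ref{main_lemma_in_intro} already supplies the needed uniform bound. For any $g \in \CT_\mu^\Box(q)$, writing $h_1,\ldots,h_{\ell(\mu)}$ for the distinct irreducible factors of its characteristic polynomial, that theorem gives
\[
\chi^{f,r}(g) = (-1)^{\frac{n}{d}(d-1)}\,\chi^{(n/d-r,\,1^r)}_{\tilde\mu}\prod_{i=1}^{\ell(\mu)}\frac{1}{\tilde\mu_i}\sum_{\substack{\beta_i \in \F_{q^{\mu_i}}\\ h_i(\beta_i)=0}}\te(\beta_i)^{\ell_f[\tilde\mu_i]_{q^d}}.
\]
Here $\chi^{(n/d-r,1^r)}_{\tilde\mu}$ is an integer independent of $q$ and of $f$, and each inner sum runs over the $\mu_i = \deg h_i$ roots of $h_i$, all of which $\te$ sends to roots of unity, so that inner sum has norm at most $\mu_i$. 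Since $\prod_i \mu_i/\tilde\mu_i = d^{\ell(\mu)}$, we get $\|\chi^{f,r}(g)\| \le C$ with $C := \big|\chi^{(n/d-r,1^r)}_{\tilde\mu}\big| \cdot d^{\ell(\mu)}$, a constant depending only on $n$, $d$, $r$, and $\mu$.

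Finally, by the triangle inequality together with $\CT_\mu^\Box(q) \subseteq \GL_n\F_q$,
\[
\frac{1}{\gamma_n(q)}\left\|\sum_{g\in\CT_\mu^\Box(q)}\chi^{f,r}(g)\right\| \le \frac{C\cdot\#\CT_\mu^\Box(q)}{\gamma_n(q)} \le \frac{C\cdot\#\GL_n\F_q}{\gamma_n(q)} = C,
\]
and since $C$ does not depend on $f$, the maximum over $f \in \CF_d(q)$ is also at most $C = O(1)$. There is essentially no obstacle here: the sole substantive ingredient is the uniform boundedness of primary characters on regular semisimple elements coming from Theorem~\ref{main_lemma_in_intro}, and the factor $\gamma_n(q)$ in the denominator is absorbed by the crude bound $\#\CT_\mu^\Box(q) \le \#\GL_n\F_q$, so there is no need for a finer count of the relevant conjugacy classes (though one is available via Corollary~\ref{rss_cc_and_cycle_type_sizes}).
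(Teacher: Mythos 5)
Your proof is correct and relies on the same key ingredient as the paper's, namely the uniform pointwise bound $\|\chi^{f,r}(g)\| \le |\chi^{(n/d-r,1^r)}_{\tilde\mu}| \cdot d^{\ell(\mu)}$ coming directly from Theorem~\ref{main_lemma_in_intro}. The paper reaches the identical estimate $\|\sum_{g}\chi^{f,r}(g)\| \le \#\CT_\mu^\Box(q)\cdot|\chi^{(n/d-r,1^r)}_{\tilde\mu}|\cdot d^{\ell(\mu)}$ by a more roundabout route (rewriting the sum over $\CT_\mu^\Box(q)$ as a sum over sets of distinct irreducible polynomials via Corollary~\ref{rss_cc_and_cycle_type_sizes} and then factoring by degree before applying the triangle inequality), whereas you bound each summand at once and apply the triangle inequality immediately; your route is cleaner and loses nothing. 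The only cosmetic difference in the final step is that the paper invokes Corollary~\ref{cycle_type_ratio_limit} to observe the ratio $\#\CT_\mu^\Box(q)/\gamma_n(q)$ tends to $1/z_\mu$, while you use the cruder but perfectly adequate bound $\#\CT_\mu^\Box(q)/\gamma_n(q)\le 1$.
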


\begin{proof}
Consider a fixed prime power \(q\) and polynomial \(f \in \CF_d (q)\) to begin.
Apply Theorem~\ref{main_lemma_in_intro} to compute the character values.
Observe that, if some part of \(\mu\) is not divisible by \(d\), then \(\sum_{g \in \CT_\mu^\Box (q)} \chi^{f,r}(g) = 0\), which satisfies the claim.
So assume there exists \(\tilde{\mu} \vdash n/d\) such that \(\mu = d \tilde{\mu}\).
Recall that, by Theorem~\ref{fulmans_characterization}, the conjugacy classes in \(\CT_\mu^\Box (q)\) are in bijection with subsets \(\{h_1,\ldots,h_{\ell(\mu)}\} \subset \CF (q)\) of distinct polynomials with \(\deg h_i = \mu_i\) for each \(i \in \{1,\ldots,\ell(\mu)\}\).
By Theorem~\ref{main_lemma_in_intro}, Corollary~\ref{rss_cc_and_cycle_type_sizes} and the fact that characters are constant on conjugacy classes, we have that
\(
\sum_{g \in \CT_\mu^\Box (q)}
\chi^{f,r} (g)\)
equals
\begin{equation}
\label{everybodys_gotta_learn_sometime}
\frac{\gamma_n(q) (-1)^{\tfrac{n}{d}(d-1)} \chi^{(n/d-r,1^r)}_{\tilde{\mu}}}{\prod_{i=1}^{\ell(\mu)} (q^{\mu_i}-1)}
\sum_{ \substack{\{ h_1,\ldots,h_{\ell(\mu)}\} \subset \CF (q) \\ \deg h_i = \mu_i \forall i} }
\prod_{i=1}^{\ell(\mu)}
\frac{1}{\tilde{\mu}_i}
\sum_{\substack{\alpha_i \in \F_{q^{\mu_i}} \\ h_i(\alpha_i) = 0}}
\te(\alpha_i)^{\ell_f[\tilde{\mu}_i]_{q^d}}.
\end{equation}
We can now separate the sum in \eqref{everybodys_gotta_learn_sometime} according to the degrees of the distinct polynomials \(h_i \in \CF_{\mu_i} (q)\).
Doing so transforms \eqref{everybodys_gotta_learn_sometime} into
\begin{equation}
\frac{\gamma_n(q) (-1)^{\tfrac{n}{d}(d-1)} \chi^{(n/d-r,1^r)}_{\tilde{\mu}}}{\prod_{i=1}^{\ell(\mu)} (q^{\mu_i}-1)}
\prod_{s \geq 1}
\left (
\frac{d}{s}
\right )^{m_s(\mu)}
\sum_{ \{p_1,\ldots,p_{m_s(\mu)}\} \subset \CF_s (q) }
\prod_{i=1}^{m_s(\mu)}
\sum_{\substack{\beta_i \in \F_{q^s} \\ p_i(\beta_i)=0}}
\te(\beta_i)^{\ell_f [s/d]_{q^d}}.
\end{equation}
Computing the norm, applying the triangle inequality, recalling that \(\te\) maps into the unit circle in \(\C\), and applying Corollary~\ref{rss_cc_and_cycle_type_sizes} gives
\begin{align*}
& \left \|
\sum_{g \in \CT^\Box_\mu (q)} \chi^{f,r} (g)
\right \| \\
\leq &
\frac{\gamma_n(q) \left | \chi^{(n/d-r,1^r)}_{\tilde{\mu}} \right |}{\prod_{i=1}^{\ell(\mu)} (q^{\mu_i}-1)}
\prod_{s \geq 1}
\left (
\frac{d}{s}
\right )^{m_s(\mu)}
\sum_{ \{p_1,\ldots,p_{m_s(\mu)}\} \subset \CF_s (q) }
\prod_{i=1}^{m_s(\mu)}
\sum_{\substack{\beta_i \in \F_{q^s} \\ p_i(\beta_i)=0}}
\left \|
\te(\beta_i)^{\ell_f [s/d]_{q^d}}
\right \| \\
= &
\frac{\gamma_n(q) \left | \chi^{(n/d-r,1^r)}_{\tilde{\mu}} \right |}{\prod_{i=1}^{\ell(\mu)} (q^{\mu_i}-1)}
\prod_{s \geq 1}
\left (
\frac{d}{s}
\right )^{m_s(\mu)}
\sum_{ \{p_1,\ldots,p_{m_s(\mu)}\} \subset \CF_s (q) }
s^{m_s(\mu)} \\
= &
\frac{\gamma_n(q) \left |\chi^{(n/d-r,1^r)}_{\tilde{\mu}} \right |}{\prod_{i=1}^{\ell(\mu)} (q^{\mu_i}-1)}
d^{\sum_{s \geq 1} m_s(\mu)}
\prod_{s \geq 1}
\binom{\# \CF_s (q)}{m_s(\mu)} = \# \CT_\mu^\Box (q) \cdot \left | \chi^{(n/d-r,1^r)}_{\tilde{\mu}} \right | \cdot d^{\, \ell(\mu)}.
\end{align*}
Thus,
\begin{equation}
\label{constant_bound}
\frac{1}{\gamma_n(q)}
\left \|
\sum_{g \in \CT_\mu^\Box (q)} \chi^{f,r} (g)
\right \|
\leq
\frac{\# \CT_\mu^\Box (q)}{
\gamma_n (q)}
\cdot \left | \chi^{(n/d-r,1^r)}_{\tilde{\mu}} \right | \cdot d^{\, \ell(\mu)}.
\end{equation}
The right side of \eqref{constant_bound} does not depend on \(f\), which implies
\begin{equation}
\label{constant_bound_2}
\max_{f \in \CF_d (q)}
\frac{1}{\gamma_n(q)}
\left \|
\sum_{g \in \CT_\mu^\Box (q)} \chi^{f,r} (g)
\right \|
\leq
\frac{\# \CT_\mu^\Box (q)}{
\gamma_n (q)}
\cdot \left | \chi^{(n/d-r,1^r)}_{\tilde{\mu}} \right | \cdot d^{\, \ell(\mu)}.
\end{equation}
Moreover, by Corollary~\ref{dense}, for sufficiently large \(q\), the right side of \eqref{constant_bound_2} is arbitrarily close to the constant value \(|\chi_{\tilde{\mu}}^{(n/d-r,1^r)}| \cdot d^{\ell(\mu)} / z_\mu\).
The result follows.
\end{proof}

\subsection{Proof of probabilistic result}

We can now prove our probabilistic result, Theorem~\ref{intro_version_p}.
Recall that it states
\begin{equation}
\lim_{q \to \infty}
p_{k,\mu} (q)
=
\lim_{q \to \infty}
p_{k,\mu}^\Box (q)
=
\frac{1}{z_\mu}
\end{equation}
for all \(n \in \N\) and \(\mu \vdash n\).

\begin{proof}[Proof~of~Theorem~\ref{intro_version_p}]
We will first prove that \(\lim_{q \to \infty} p_{k,\mu}^\Box (q) = 1 / z_\mu\).
From this, it follows that \(\lim_{q \to \infty} p_{k,\mu} (q) = 1 / z_\mu\) because, for all prime powers \(q\), we have \(p_{k,\nu} (q) \geq p_{k,\nu}^\Box (q)\) for all \(\nu \vdash n\) and \(\sum_{\nu \vdash n} p_{k,\nu} (q) = 1 = \sum_{\nu \vdash n} 1/z_\nu\).

Consider the following formulation of \(p_{k,\mu}^\Box (q)\).
By its definition \eqref{pBox_defs} and by Corollary~\ref{simpler_ff_in_situ_corollary}, we have
\begin{equation}
\label{pBox_formulation}
p_{k,\mu}^\Box (q) =
\sum_{d | n}
\sum_{r = 0}^{\tfrac{n}{d} - 1}
\sum_{f \in \CF_d (q)}
\left (
\frac{\sum_{g \in \CT_{(n)}(q)} \chi^{f,r}(g)}{\# \CT_{(n)}(q)}
\right )^k
\left (
\frac{\sum_{h \in \CT^\Box_\mu(q)} \chi^{f,r}(h)}{\gamma_n(q) \cdot \deg_{n,d,r}(q)^{k-1}}
\right ).
\end{equation}
We want to compute \(\lim_{q \to \infty} p_{k,\mu}^\Box (q)\), but the index set for the summation over \(f \in \CF_d (q)\) in \eqref{pBox_formulation} itself depends on \(q\).
Therefore, for each \(d | n\), \(r \in \{0,\ldots,n/d-1\}\), and \(f \in \CF_d (q)\) we define
\begin{equation}
\Phi_{k,\mu,d,r} (q)
=
\sum_{f \in \CF_d (q)}
\left (
\frac{\sum_{g \in \CT_{(n)}(q)} \chi^{f,r}(g)}{\# \CT_{(n)}(q)}
\right )^k
\left (
\frac{\sum_{h \in \CT^\Box_\mu(q)} \chi^{f,r}(h)}{\gamma_n(q) \cdot \deg_{n,d,r}(q)^{k-1}}
\right )
\end{equation}
so that
\begin{equation}
p^\Box_{k,\mu} (q)
=
\sum_{d | n}
\sum_{r = 0}^{\tfrac{n}{d}-1}
\Phi_{k,\mu,d,r} (q),
\end{equation}
where the number of terms in the summation is fixed, even as \(q\) varies.
Theorem~\ref{intro_version_p} now follows from Lemma~\ref{Phi_behavior} below, which computes the limiting behavior of \(\Phi_{k,\mu,d,r} (q)\) for each \(d | n\) and \(r \in \{0,\ldots,n/d-1\}\).
\end{proof}

\begin{lemma}
\label{Phi_behavior}
For all \(n,k \in \N, \mu \vdash n, d | n\), and \(r \in \{0,\ldots,n/d-1\}\), we have
\begin{equation}
\lim_{q \to \infty}
\Phi_{k,\mu,d,r} (q)
=
\begin{cases}
0         & \text{ if } d > 1 \text{ or }  r > 0, \\
1 / z_\mu & \text{ if } d = 1 \text{ and } r = 0.
\end{cases}
\end{equation}
\end{lemma}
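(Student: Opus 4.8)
The plan is to peel off the single summand responsible for the value $1/z_\mu$ and to bound all the others. Write $\Phi_{k,\mu,d,r}(q) = \sum_{f \in \CF_d(q)} T_f(q)$, where $T_f(q)$ denotes the $f$-th summand in the definition of $\Phi_{k,\mu,d,r}(q)$, and single out the term $f(z) = z-1$, which is present exactly when $d = 1$. The claim then reduces to three assertions: (a) if $d = 1$ and $r = 0$, then $T_{z-1}(q) \to 1/z_\mu$; (b) if $d = 1$ and $r > 0$, then $T_{z-1}(q) \to 0$; and (c) in all cases, $\big\|\sum_{f \in \CF_d(q),\, f \neq z-1} T_f(q)\big\| \to 0$. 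Since the pair $(d,r) = (1,0)$ is precisely the exceptional case in the statement, (a)--(c) together give the lemma.

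For (a) and (b), I would evaluate the two character sums in $T_{z-1}(q)$ exactly. By Theorem~\ref{sn_stuff}, $\chi^{z-1,r}(h) = \chi^{(n-r,1^r)}_\mu$ for $h \in \CT_\mu^\Box(q)$; applying the same theorem to a regular elliptic element (valid since $\CT_{(n)}(q) = \CT_{(n)}^\Box(q)$) together with Corollary~\ref{MNrule_ncycle} gives $\chi^{z-1,r}(g) = \chi^{(n-r,1^r)}_{(n)} = (-1)^r$ for $g \in \CT_{(n)}(q)$, so that
\[
T_{z-1}(q) = (-1)^{rk}\,\chi^{(n-r,1^r)}_\mu \cdot \frac{\#\CT_\mu^\Box(q)}{\gamma_n(q)\,\deg_{n,1,r}(q)^{k-1}}.
\]
If $r = 0$ then $\deg_{n,1,0}(q) = 1$ and $\chi^{(n)}_\mu = 1$, so $T_{z-1}(q) = \#\CT_\mu^\Box(q)/\gamma_n(q) \to 1/z_\mu$ by Corollary~\ref{dense}, giving (a). If $r > 0$ then $e(n,1,r) > 0$ by Lemma~\ref{degree_limit_lemma}, so $\deg_{n,1,r}(q)^{1-k} = O(q^{-(k-1)e(n,1,r)}) \to 0$ (using $k \ge 2$), while $\#\CT_\mu^\Box(q)/\gamma_n(q)$ is bounded by Corollary~\ref{dense}; this gives (b).

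For (c), I would bound the norm of the sum by the number of summands times the worst-case size of each factor. The index set $\CF_d(q)\setminus\{z-1\}$ has $O(q^d)$ elements by \eqref{size_of_Fm}; Lemma~\ref{helvetica_scenario} bounds the regular elliptic character factor by $O(q^{D(n)-n})$ uniformly over $f \ne z-1$; Lemma~\ref{char_sum_norm_lemma} bounds the $\CT_\mu^\Box(q)$ character factor (over $\gamma_n(q)$) by $O(1)$ uniformly over $f$; and Lemma~\ref{degree_limit_lemma} gives $\deg_{n,d,r}(q)^{1-k} = O(q^{-(k-1)e(n,d,r)})$. Multiplying, $\big\|\sum_{f\ne z-1} T_f(q)\big\| = O\big(q^{\,d + k(D(n)-n) - (k-1)e(n,d,r)}\big)$, so it suffices to check that the exponent $d + k(D(n)-n) - (k-1)e(n,d,r)$ is strictly negative when $k \ge 2$. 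Since $D(n) \le n/2$ and $d \le n$, the first two terms contribute at most $n - kn/2 \le 0$, and $e(n,d,r) \ge 0$ by Lemma~\ref{degree_limit_lemma}, so the exponent is always $\le 0$. Strictness comes from two observations: if $d > 1$ or $r > 0$ then $e(n,d,r) > 0$ (Lemma~\ref{degree_limit_lemma}), so $-(k-1)e(n,d,r) < 0$; and if $(d,r) = (1,0)$ with $n \ge 2$ then $e = 0$ but $d = 1 < n$, whence $d + k(D(n)-n) \le 1 - kn/2 \le 1 - n < 0$. The only remaining instance, $n = 1$ (which forces $d = 1$, $r = 0$), is handled directly: for each $f \in \CF_1(q)\setminus\{z-1\}$ the character $\chi^{f,0}$ is a nontrivial linear character of $\GL_1\F_q = \CT_{(1)}(q)$, so $\sum_{g \in \CT_{(1)}(q)}\chi^{f,0}(g) = 0$ and the sum in (c) vanishes identically. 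Combining (a)--(c) proves the lemma.

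I expect the exponent bookkeeping in (c) to be the only real obstacle: one must track four competing powers of $q$ — from the number of degree-$d$ irreducibles, the regular elliptic character sum, the $\CT_\mu^\Box(q)$ character sum, and the character degree $\deg_{n,d,r}(q)$ — and confirm the net exponent is negative in each of the subcases $d > 1$, $\{d = 1, r > 0\}$, and $\{d = 1, r = 0\}$. Once Lemmas~\ref{degree_limit_lemma}, \ref{helvetica_scenario}, and \ref{char_sum_norm_lemma} are available, this is a routine but slightly delicate inequality check, and everything else is mechanical.
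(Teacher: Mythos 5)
Your proof is correct and takes essentially the same route as the paper's: both split off the $f(z)=z-1$ summand when $d=1$, evaluate it exactly via Theorem~\ref{sn_stuff} and Corollary~\ref{dense}, and bound the remaining summands by combining Lemmas~\ref{degree_limit_lemma}, \ref{helvetica_scenario}, and \ref{char_sum_norm_lemma} with the count \eqref{size_of_Fm}, then verifying the net exponent in $q$ is strictly negative. The only cosmetic difference is your uniform treatment of the non-$(z-1)$ terms across the three $(d,r)$ regimes plus a separate (though unnecessary) direct argument for $n=1$, where the paper instead notes that $1+k(D(1)-1)=1-k<0$ already makes the exponent bound go through.
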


\begin{proof}
Consider first the case that \(d > 1\) and \(r\) is arbitrary.
Observe that
\(
\|
\Phi_{k,\mu,d,r} (q)
\|
\)
is bounded above by
\begin{equation}
\label{naive_max_bound}
\frac{
\# \CF_d (q)
}{
\deg_{n,d,r} (q)^{k-1}
}
\cdot
\max_{f \in \CF_d (q)}
\left (
\frac{\left \| \sum_{g \in \CT_{(n)}(q)} \chi^{f,r}(g) \right \|}{\# \CT_{(n)}(q)}
\right )^k
\cdot
\max_{f \in \CF_d(q)}
\frac{\left \| \sum_{h \in \CT^\Box_\mu(q)} \chi^{f,r}(h) \right \|}{\gamma_n(q)}
\end{equation}
for all prime powers \(q\).
We proceed to investigate the asymptotic dependence on \(q\) of \eqref{naive_max_bound}.
Recall from \eqref{size_of_Fm} that \(\# \CF_d (q) = O(q^d)\).
Combining this with Lemmas \ref{degree_limit_lemma},
\ref{helvetica_scenario},
and
\ref{char_sum_norm_lemma},
we have
\begin{equation}
\| \Phi_{k,\mu,d,r} (q) \|
=
O \left ( q^{d + k (D(n) - n) - (k-1) \cdot e(n,d,r)} \right ).
\end{equation}
By hypothesis, \(k \geq 2\), implying \(d  + k \cdot (D(n) - n) \leq d - k \cdot n/2 \leq 0\).
Moreover, by Lemma~\ref{degree_limit_lemma}, \((k-1)\cdot e(n,d,r) > 0\).
It follows that \(\lim_{q \to \infty} \Phi_{k,\mu,d,r} (q) = 0\) if \(d > 1\).

Next, consider the case \(d = 1\) and \(r > 0\).
Observing that \(z-1 \in \CF_1 (q)\) for all prime powers \(q\) and applying Theorem~\ref{sn_stuff}, we can rewrite \(\Phi_{k,\mu,1,r} (q)\) as
\begin{align}
\Phi_{k,\mu,1,r} (q)
& =
\frac{\# \CT_\mu^\Box}{\gamma_n (q)}
\cdot
\frac{(-1)^{rk} \chi^{(n-r,1^r)}_\mu}{\left ( q^{\binom{r+1}{2}} {\genfrac[]{0pt}{1}{n - 1}{r}}_{q} \right )^{k-1}}
\label{the_z_minus_1_term}
\\
& + \label{not_z_minus_1_term}
\sum_{\substack{f \in \CF_1 (q) \\ f(z) \neq z-1}}
\left (
\frac{\sum_{g \in \CT_{(n)}(q)} \chi^{f,r}(g)}{\# \CT_{(n)}(q)}
\right )^k
\left (
\frac{\sum_{h \in \CT^\Box_\mu(q)} \chi^{f,r}(h)}{\gamma_n(q) \cdot \deg_{n,d,r}(q)^{k-1}}
\right ).
\end{align}
We repeat the same analysis as before, but apply it only to \eqref{not_z_minus_1_term}.
Observe that \eqref{not_z_minus_1_term} is bounded above by
\begin{equation}
\label{not_z_minus_1_term_bound}
\frac{
(\# \CF_1(q)) - 1
}{
\deg_{n,d,r} (q)^{k-1}
}
\cdot
\max_{\substack{f \in \CF_1 (q) \\ f(z) \neq z-1}}
\left (
\frac{\left \| \sum_{g \in \CT_{(n)}(q)} \chi^{f,r}(g) \right \|}{\# \CT_{(n)}(q)}
\right )^k
\cdot
\max_{\substack{f \in \CF_1 (q) \\ f(z) \neq z-1}}
\frac{\left \| \sum_{h \in \CT^\Box_\mu(q)} \chi^{f,r}(h) \right \|}{\gamma_n(q)}
\end{equation}
Applying Lemmas \ref{degree_limit_lemma}, \ref{helvetica_scenario}, and \ref{char_sum_norm_lemma} again, we see that \eqref{not_z_minus_1_term_bound} is
\begin{equation}
\label{d_1_non_z_minus_1}
O \left (
q^{1 + k (D(n) - n) - (k-1) \cdot e(n,1,r)}
\right ).
\end{equation}
As before, \(1 + k \cdot (D(n) - n) - (k-1) \cdot e(n,1,r)\) is negative.
It follows that the limit as \(q \to \infty\) of \eqref{not_z_minus_1_term} is zero if \(d = 1\) and \(r > 0\).
Applying Corollary~\ref{dense} to \eqref{the_z_minus_1_term}, we can conclude that \(\lim_{q \to \infty} \Phi_{k,\mu,1,r} (q) = 0\) if \(d = 1\) and \(r > 0\).

Finally, we consider the case \(d = 1\) and \(r = 0\).
Carrying out the same analysis as in the previous paragraph, we see that
\[
\| \Phi_{k,\mu,1,0} (q) \|
=
\frac{\# \CT_\mu^\Box (q)}
{\gamma_n (q)}
+
O \left (
q^{1 + k(D(n) - n)}
\right ).
\]
Observe that \(1 + k(D(n) - n) < 0\) even if \(n = 1\) due to the fact that \(k \geq 2\) and \(D(1)=0\).
The result now follows from Corollary~\ref{dense}.
\end{proof}

\section{Further work}
\label{outro}

\subsection{Polynomiality results}

We discuss some results regarding how similar \(g_{k,\mu}(q)\) is to a polynomial for various choices of \(\mu\).
Recall that \(\gamma_n (q), P_{n,k} (q), \deg_{n,d,r} (q)\), and \(D_{n,k,d} (q)\) are all rational in \(q\) with rational coefficients.

\begin{corollary}[to~Thm.~\ref{re_main}]
\label{full_polynomiality_for_some}
Suppose \(n, k, \ell \in \N\) with \(n > 2\) and \(\ell > 1\).
If \(\mu = (\mu_1,\ldots,\mu_{\ell})\) with \(\mu_{\ell-1} > \mu_\ell = 1\), then \(g_{k,\mu}^\Box (q)\) is a polynomial in \(q\) with rational coefficients.
\end{corollary}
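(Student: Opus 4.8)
The plan is to feed the closed form of Theorem~\ref{re_main} into a polynomial-divisibility argument. Recall that the denominator in \eqref{mu_ell_equals_1_part_eq} is \(\deg_{n,1,r}(q)^{k-1}\), where \(\deg_{n,1,r}(q)=q^{\binom{r+1}{2}}\genfrac[]{0pt}{1}{n-1}{r}_q\) is the \(d=1\) case of the entry \(\deg_{n,d,r}\) of Table~\ref{formula_table}. Write the right-hand side of \eqref{mu_ell_equals_1_part_eq} as \(A(q)\,S(q)\), where \(A(q)=\#\CT_{(n)}(q)^{k}\,\#\CT_{\mu}^{\Box}(q)/\#\GL_n\F_q\) and \(S(q)=\sum_{r=0}^{n-1}(-1)^{rk}\chi^{(n-r,1^{r})}_{\mu}\deg_{n,1,r}(q)^{1-k}\). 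Each character value \(\chi^{(n-r,1^{r})}_{\mu}\) is an integer that does not depend on \(q\), so it suffices to show that \(A(q)\deg_{n,1,r}(q)^{1-k}\in\Q[q]\) for every \(r\in\{0,\dots,n-1\}\); then \(g^{\Box}_{k,\mu}(q)\), being a \(\Z\)-linear combination of these polynomials, is itself a polynomial in \(q\) with rational coefficients.

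Next I would expand \(A(q)\) using \(\#\GL_n\F_q=\gamma_n(q)=q^{\binom{n}{2}}\prod_{\ell=1}^{n}(q^{\ell}-1)\), the formulas for \(\#\CT_{(n)}(q)\) and \(\#\CT_{\mu}^{\Box}(q)\) from Corollary~\ref{rss_cc_and_cycle_type_sizes}, and \(\#\CF_i(q)=\tfrac1i\sum_{s\mid i}\moebius(i/s)(q^{s}-1)\in\Q[q]\) from \eqref{size_of_Fm}. After cancelling the factor \((q^{n}-1)^{k}\) that sits inside \(\gamma_n(q)^{k}\), one finds that \(A(q)\deg_{n,1,r}(q)^{1-k}\) equals the polynomial \(\#\CF_n(q)^{k}\prod_{i\ge1}\binom{\#\CF_i(q)}{m_i(\mu)}\) (which lies in \(\Q[q]\), since each \(\binom{\#\CF_i(q)}{m_i(\mu)}\) does) times \(q^{k\binom{n}{2}}\prod_{\ell=1}^{n-1}(q^{\ell}-1)^{k}\) divided by \(\deg_{n,1,r}(q)^{k-1}\prod_{i=1}^{\ell(\mu)}(q^{\mu_i}-1)\). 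So the whole claim reduces to showing that \(\deg_{n,1,r}(q)^{k-1}\prod_{i=1}^{\ell(\mu)}(q^{\mu_i}-1)\) divides \(q^{k\binom{n}{2}}\prod_{\ell=1}^{n-1}(q^{\ell}-1)^{k}\) in \(\Z[q]\).

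For this I would prove two divisibilities and multiply them in the UFD \(\Q[q]\). (i) \(\deg_{n,1,r}(q)^{k-1}\mid q^{k\binom{n}{2}}\prod_{\ell=1}^{n-1}(q^{\ell}-1)^{k-1}\): from the definition \eqref{x_analogue_of_n_choose_k}, \(\genfrac[]{0pt}{1}{n-1}{r}_q\mid[n-1]_q!=\prod_{\ell=1}^{n-1}[\ell]_q\), and \([\ell]_q\mid q^{\ell}-1\), so \(\genfrac[]{0pt}{1}{n-1}{r}_q\mid\prod_{\ell=1}^{n-1}(q^{\ell}-1)\); combine this with \((k-1)\binom{r+1}{2}\le k\binom n2\) (valid because \(r\le n-1\)) and take \((k-1)\)st powers. (ii) \(\prod_{i=1}^{\ell(\mu)}(q^{\mu_i}-1)\mid\prod_{\ell=1}^{n-1}(q^{\ell}-1)\): factor both sides into cyclotomic polynomials \(\Phi_d\), using \(\Phi_d\mid q^{m}-1\iff d\mid m\). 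The multiplicity of \(\Phi_d\) on the left is \(e_d:=\#\{i:d\mid\mu_i\}\); the parts divisible by \(d\) are each \(\ge d\) and sum to at most \(n\), so \(e_d\,d\le n\) and hence \(e_d\le\lfloor n/d\rfloor\). The multiplicity of \(\Phi_d\) on the right is \(\lfloor(n-1)/d\rfloor\), which equals \(\lfloor n/d\rfloor\) unless \(d\mid n\); and when \(d\mid n\), having \(e_d=n/d\) would force every part of \(\mu\) to equal \(d\), i.e.\ \(\mu=(d^{n/d})\), which is impossible because the hypothesis \(\mu_{\ell-1}>\mu_\ell=1\) makes \(\mu\) contain both a part equal to \(1\) and a part larger than \(1\). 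Hence \(e_d\le\lfloor(n-1)/d\rfloor\) for every \(d\), giving (ii). Combining (i) and (ii) yields the required divisibility, and the resulting quotient lies in \(\Q[q]\) because exact division of polynomials over \(\Q\) stays in \(\Q[q]\).

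The step I expect to demand the most care is the cyclotomic comparison (ii): it is precisely where the hypothesis on \(\mu\) is spent, since for \(\mu=(n)\) — or more generally \(\mu=(d^{n/d})\) for \(d\mid n\) — the divisibility genuinely fails, in keeping with \(g^{\Box}_{k,(n)}(q)=g_{k,(n)}(q)\) not being a rational function in general (Remark~\ref{more_notation_and_rationality}). So the argument must isolate exactly which partitions to exclude; everything else is routine bookkeeping with the \(q\)-analogue identities \eqref{x_analogue_of_n}--\eqref{x_analogue_of_n_choose_k}.
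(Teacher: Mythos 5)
Your argument is correct, but it takes a genuinely different route from the paper. The paper's proof is an indirect, non-constructive one-liner: it observes from Theorem~\ref{re_main} that \(n^k z_\mu \cdot g^\Box_{k,\mu}(q)\) is a rational function of \(q\) with integer coefficients that takes integer values at infinitely many arguments (namely all prime powers), invokes the standard fact that such a rational function must in fact be a polynomial (otherwise the non-polynomial remainder term would tend to zero but be nonzero, contradicting integrality), and then divides back by the constant \(n^k z_\mu\). Your proof instead makes the required polynomial divisibility fully explicit: you expand \(A(q)\) using Corollary~\ref{rss_cc_and_cycle_type_sizes} and \eqref{size_of_Fm}, reduce to the single divisibility \(\deg_{n,1,r}(q)^{k-1}\prod_{i}(q^{\mu_i}-1)\mid q^{k\binom{n}{2}}\prod_{\ell=1}^{n-1}(q^\ell-1)^k\), and verify it by counting multiplicities of each cyclotomic factor \(\Phi_d\), where the hypothesis \(\mu_{\ell-1}>\mu_\ell=1\) enters precisely to exclude \(\mu=(d^{n/d})\) and save the needed factor of \(\Phi_d\) when \(d\mid n\). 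The trade-off is clear: the paper's route is shorter and requires no computation, while yours is constructive, pins down exactly which partitions the argument fails for, and dovetails with Remark~\ref{more_notation_and_rationality} in explaining why the \(\mu=(n)\) case behaves so differently. Both are sound.
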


\begin{proof}
Theorem~\ref{re_main} implies that \(n^k z_\mu \cdot g^\Box_{k,\mu} (q)\) is a rational function of \(q\) with integer coefficients which takes on integral values infinitely many times.
Thus \(n^k z_\mu \cdot g^\Box_{k,\mu} (q)\) is an integer polynomial in \(q\).
Dividing by \(n^k z_\mu \in \N\) gives the result.
\end{proof}

Next, we prove Corollary~\ref{polynomiality_for_nu_n}, which states that \(g_{k,(n)} (q)\) is a quasipolynomial in \(q\) of quasiperiod \(n\) in the case that \(n\) is prime.
Note that \(g_{1,(n)} (q) = \# \CT_{(n)} (q)\), which is a polynomial in \(q\), independent of \(n\) or the congruence class of \(q\).
However, when \(k \geq 2\), Corollary~\ref{polynomiality_for_nu_n} has more to say.

\begin{proof}[Proof of Corollary~\ref{polynomiality_for_nu_n}]
We will apply a similar reasoning to that stated in the proof of Corollary~\ref{full_polynomiality_for_some}.
Recall that the function \(C_{n,k,c}(q)\) is not rational in \(q\) in general,
which prevents \(g_{k,(n)} (q)\) from being rational.
Define
\begin{equation}
M_i = \{ q \, \text{ prime power} : q \equiv i \pmod{n}\}, \quad \text{for } i \in \{0,1,\ldots,n-1\}.
\end{equation}
The result will follow once we can show that, for each \(c | n\) and \(i \in \{0,\ldots,n-1\}\), we have that \(C_{n,k,c}(q)\) becomes a polynomial in \(q\) when restricted to \(M_i\).

In order to do this, it suffices to show that, for each \(i \in \{0,\ldots,n-1\}\) and choice of \(c, s_1,\ldots,s_k | n\),
\begin{equation}
\label{objective_lcm}
\lcm
\left (
\frac{q^n-1}{q^c-1},
q^{s_1} - 1,
\ldots,
q^{s_k} - 1
\right )
\end{equation}
agrees with some polynomial on \(M_i\).
Since \(n\) is prime, we have \(c,s_1,\ldots,s_k \in \{1,n\}\).
Furthermore, if any \(s_i = n\), we have that \eqref{objective_lcm} equals \(q^n-1\), a fixed polynomial in \(q\),
independent of \(c\) or the congruence class of \(q\).
Therefore, for each choice of \(c \in \{1,n\}\), we need only consider the case in which \(s_1 = \cdots = s_k = 1\) and hence must show that
\begin{equation}
\label{objective_lcm_2}
\lcm
\left (
\frac{q^n-1}{q^c-1},
q - 1
\right )
\end{equation}
is a polynomial on each \(M_i\).

Observe that, if \(c = n\), then \eqref{objective_lcm_2} equals \(q-1\), a fixed polynomial in \(q\), independent of the congruence class of \(q\).
Therefore, we now need only consider the case \(c = s_1 = \cdots = s_k = 1\) and hence must show that
\begin{equation}
\label{objective_lcm_3}
\lcm
\left (
\frac{q^n-1}{q-1},
q - 1
\right )
\end{equation}
is a polynomial on each \(M_i\).

Let \(i \in \{0,\ldots,n-1\}\)
and assume \(q = na + i\) for some \(a \in \N\).
We can compute \eqref{objective_lcm_3} as
\begin{align}
\lcm
\left (
\frac{q^n-1}{q-1},
q-1
\right )
& =
\frac{q^n-1}{
\gcd
\left (
[n]_q, q-1
\right )
}
=
\frac{q^n-1}{
\gcd \left (
n,q-1
\right )
} \nonumber \\
& =
\frac{q^n-1}{\gcd(n,na+i-1)}
= \frac{q^n-1}{\gcd(n,i-1)}
=
\begin{cases}
\frac{q^n-1}{n} & i=1 \\
q^n-1           & i \neq 1.
\end{cases}
\label{grand_finale_i_guess}
\end{align}
The result now follows from the fact that \eqref{grand_finale_i_guess} is a fixed polynomial in \(q\) for each fixed \(i \in \{0,\ldots,n-1\}\).
\end{proof}

\begin{example}
We now use the main results of the paper to write down alternate formulas for \(g_{2,(2)} (q)\) and \(g_{2,(3)} (q)\).
Note that Theorem~\ref{nu_n_part} provides an explicit formula while Theorem~\ref{intro_version_p} determines the degree of the polynomials \(f_0,\ldots,f_{n-1}\) mentioned in Corollary~\ref{polynomiality_for_nu_n}.
First, for \(n = 2\), we have
\begin{equation}
g_{2,(2)}(q)
=
\frac{q (q - 1)^3 (q^4 - 3q^3 + 4q^2 - \tfrac{1}{2}q - \tfrac{1}{2})}{8}
+(-1)^q
\cdot
\frac{q(q + 1)(q - 1)^3}{16}
\end{equation}
for all prime powers \(q\).
Furthermore, for \(n = 3\), define polynomials
\begin{align*}
f_0 (q) & = \frac{q^6 (q + 1)^2 (q - 1)^4  (q^6 - 4q^4 + 3q^3 + 5q^2 - 9q + 1)}{27}, \\
f_1 (q) & = \frac{q^3 (q + 1)  (q - 1)^5 (q^9 + 2q^8 - 2q^7 - 3q^6 + 5q^5 + q^4 - 9q^3 - 4q^2 - 2q + 2)}{27}, \\
f_2 (q) & = \frac{q^6 (q + 1)^2 (q - 1)^4  (q^6 - 4q^4 + 3q^3 + 5q^2 - 9q + 1)}{27}.
\end{align*}
Letting \(\zeta = e^{2 \pi i / 3}\), define
\begin{equation*}
P_1 = \frac{f_0 + \zeta^2 f_1 + \zeta f_2}{3},
\quad
P_2 = \frac{f_0 + \zeta f_1 + \zeta^2 f_2}{3},
\quad
P_3 = \frac{f_0 + f_1 + f_2}{3}.
\end{equation*}
Finally, we have
\begin{equation}
g_{2,(3)} (q) = \zeta^q P_1(q) + \zeta^{2q} P_2(q) + P_3(q)
\end{equation}
for all prime powers \(q\).
\end{example}

\begin{remark}
\label{bummer}
Data suggest that, in general, \(g_{k,(n)}(q)\) is not a quasipolynomial when \(n\) is not prime.
In fact, because Theorem~\ref{intro_version_p} controls the degree of any polynomial that might agree with an infinite family of values of \(g_{k,(n)} (q)\) for fixed \(n,k\), one can prove in specific instances that such a polynomial does not exist.
For instance, \(g_{2,(4)} (q)\) is not a polynomial on the prime powers congruent to \(2 \pmod{4}\).
\end{remark}

\subsection{Open problems}

In this section, we list some open problems.
Of course, one can continue our present line of research by looking for explicit formulas for \(g_{k,\mu} (q)\) and \(g_{k,\mu}^\Box (q)\) for cases not yet settled by this paper.
However, we also present the following problems associated with strengthening the existing results.

We start with the observation that Theorems~\ref{re_main} and \ref{nu_n_part} do not answer the question of how products of regular elliptic elements are distributed among the individual conjugacy classes that comprise the various cycle types.
In particular, given a fixed regular elliptic element \(c \in \CT_{(n)}\), computing \(g_{k,(n)} (q)\) does not necessarily help one count the factorizations \(c = t_1 \cdots t_k\) with \(t_1,\ldots,t_k \in \CT_{(n)}\).
Therefore, we propose the following problem.

\begin{problem}
Refine Theorems \ref{re_main} and \ref{nu_n_part} to the level of conjugacy classes.
\end{problem}

Next, we recall Corollary~\ref{polynomiality_for_nu_n}, which says if \(n\) is prime, then \(g_{k,(n)} (q)\) is a quasipolynomial.
Data suggest that, if \(n\) is not prime, \(g_{k,(n)} (q)\) still agrees with a polynomial at least on the set of prime powers congruent to \(1\) modulo \(n\).
This is a considerably weaker result, but it suggests \(g_{k,(n)} (q)\) might have some nice structure, even though Remark~\ref{bummer} points out that we cannot expect quasipolynomiality in general.

\begin{problem}
Prove that, even when \(n\) is not prime, there exists a polynomial \(f_1 \in \Q[x]\), such that \(g_{k,(n)} (q) = f_1(q)\) for all prime powers \(q \equiv 1 \pmod{n}\).
\end{problem}

We conclude with a problem about \(q\)-analogues.
As noted by \eqref{crazy_q_analogue_idea}, for some choices of \(\mu \vdash n\) and after appropriately normalizing, \(g_{k,\mu}^\Box (q)\) appears to be a \(q\)-analogue of \(g_{k,\mu}\) in the traditional \(q \to 1\) sense.
Unfortunately, it is not clear whether \(g_{k,(n)} (q)\) exhibits the same behavior.

\begin{problem}
Establish a precise way in which \(g_{k,(n)} (q)\) is a \(q\)-analogue of \(g_{k,(n)}\).
\end{problem}

\section*{Acknowledgments}

The author thanks
Sara Billey,
Jia Huang,
Joseph Kung,
Joel Lewis,
Alejandro Morales, and
Vic Reiner,
for their help in writing this paper.

\bibliographystyle{plain}
\bibliography{./the}

\begin{thebibliography}{10}

\bibitem{bertram_wei}
E.~A. Bertram and V.~K. Wei.
\newblock Decomposing a permutation into two large cycles: an enumeration.
\newblock {\em SIAM J. Algebraic Discrete Methods}, 1(4):450--461, 1980.

\bibitem{boccara}
G.~Boccara.
\newblock Nombre de repr\'esentations d'une permutation comme produit de deux
  cycles de longueurs donn\'ees.
\newblock {\em Discrete Math.}, 29(2):105--134, 1980.

\bibitem{lattice}
L.~Brickman and P.~A. Fillmore.
\newblock The invariant subspace lattice of a linear transformation.
\newblock {\em Canad. J. Math.}, 19:810--822, 1967.

\bibitem{denes}
J.~D{\'e}nes.
\newblock The representation of a permutation as the product of a minimal
  number of transpositions and its connection with the theory of graphs.
\newblock {\em Publ. Math. Inst. Hungar. Acad. Sci}, 4:63--70, 1959.

\bibitem{DF}
D.~S. Dummit and R.~M. Foote.
\newblock {\em Abstract algebra}.
\newblock John Wiley \& Sons, Inc., Hoboken, NJ, third edition, 2004.

\bibitem{elsv}
R.~Ekedahl, S.~Lando, M.~Shapiro, and A.~Vainshtein.
\newblock Hurwitz numbers and intersections on moduli spaces of curves.
\newblock {\em Invent. Math.}, 146(2):297--327, 2001.

\bibitem{frob}
F.~G. Frobenius.
\newblock {\em Gesammelte {A}bhandlungen. {B}\"ande {I}, {II}, {III}}.
\newblock Herausgegeben von J.-P. Serre. Springer-Verlag, Berlin-New York,
  1968.

\bibitem{fulman}
J.~Fulman.
\newblock Cycle indices for the finite classical groups.
\newblock {\em J. Group Theory}, 2(3):251--289, 1999.

\bibitem{fulton}
W.~Fulton.
\newblock {\em Young tableaux}, volume~35 of {\em London Mathematical Society
  Student Texts}.
\newblock Cambridge University Press, Cambridge, 1997.

\bibitem{FH}
W.~Fulton and J.~Harris.
\newblock {\em Representation theory}, volume 129 of {\em Graduate Texts in
  Mathematics}.
\newblock Springer-Verlag, New York, 1991.

\bibitem{gauss}
C.~F. Gauss.
\newblock {\em Carl Friedrich Gauss' Untersuchungen \"uber h\"ohere Arithmetik.
  Deutsch hrsg. von H. Mas}.
\newblock 1889.

\bibitem{cacti}
I.~P. Goulden and D.~M. Jackson.
\newblock The combinatorial relationship between trees, cacti and certain
  connection coefficients for the symmetric group.
\newblock {\em European J. Combin.}, 13(5):357--365, 1992.

\bibitem{green}
J.~A. Green.
\newblock The characters of the finite general linear groups.
\newblock {\em Trans. Amer. Math. Soc.}, 80:402--447, 1955.

\bibitem{HLR}
J.~Huang, J.~B. Lewis, and V.~Reiner.
\newblock Absolute order in general linear groups.
\newblock {\em J. Lond. Math. Soc. (2)}, 95(1):223--247, 2017.

\bibitem{jackson_top}
D.~M. Jackson.
\newblock Counting cycles in permutations by group characters, with an
  application to a topological problem.
\newblock {\em Trans. Amer. Math. Soc.}, 299(2):785--801, 1987.

\bibitem{kung}
J.~P.~S. Kung.
\newblock The cycle structure of a linear transformation over a finite field.
\newblock {\em Linear Algebra Appl.}, 36:141--155, 1981.

\bibitem{LZ}
S.~K. Lando and A.~K. Zvonkin.
\newblock {\em Graphs on surfaces and their applications}, volume 141 of {\em
  Encyclopaedia of Mathematical Sciences}.
\newblock Springer-Verlag, Berlin, 2004.
\newblock With an appendix by Don B. Zagier.

\bibitem{lehrer}
G.~I. Lehrer.
\newblock The cohomology of the regular semisimple variety.
\newblock {\em J. Algebra}, 199(2):666--689, 1998.

\bibitem{LM}
J.~B. Lewis and A.~H. Morales.
\newblock {${\rm GL}_n(\mathbb{F}_q)$}-analogues of factorization problems in
  the symmetric group.
\newblock {\em European J. Combin.}, 58:75--95, 2016.

\bibitem{refl_fact}
J.~B. Lewis, V.~Reiner, and D.~Stanton.
\newblock Reflection factorizations of {S}inger cycles.
\newblock {\em J. Algebraic Combin.}, 40(3):663--691, 2014.

\bibitem{macD}
I.~G. Macdonald.
\newblock {\em Symmetric functions and {H}all polynomials}.
\newblock Oxford Mathematical Monographs. The Clarendon Press, Oxford
  University Press, New York, second edition, 1995.

\bibitem{murnaghan}
F.~D. Murnaghan.
\newblock On the representations of the symmetric group.
\newblock {\em American Journal of Mathematics}, 59(3):437--488, 1937.

\bibitem{nakayama}
T.~Nakayama.
\newblock On some modular properties of irreducible representations of a
  symmetric group. i.
\newblock {\em Jap. J. Math.}, 18, 04 2011.

\bibitem{sagan}
B.~E. Sagan.
\newblock {\em The symmetric group}, volume 203 of {\em Graduate Texts in
  Mathematics}.
\newblock Springer-Verlag, New York, second edition, 2001.

\bibitem{serre}
J.-P. Serre.
\newblock {\em Linear representations of finite groups}.
\newblock Springer-Verlag, New York-Heidelberg, 1977.
\newblock Graduate Texts in Mathematics, Vol. 42.

\bibitem{ncycle}
R.~P. Stanley.
\newblock Factorization of permutations into {$n$}-cycles.
\newblock {\em Discrete Math.}, 37(2-3):255--262, 1981.

\bibitem{EC2}
R.~P. Stanley.
\newblock {\em Enumerative combinatorics. {V}ol. 2}, volume~62 of {\em
  Cambridge Studies in Advanced Mathematics}.
\newblock Cambridge University Press, Cambridge, 1999.
\newblock With a foreword by Gian-Carlo Rota and appendix 1 by Sergey Fomin.

\bibitem{EC1}
R.~P. Stanley.
\newblock {\em Enumerative combinatorics. {V}olume 1}, volume~49 of {\em
  Cambridge Studies in Advanced Mathematics}.
\newblock Cambridge University Press, Cambridge, second edition, 2012.

\bibitem{steinberg}
R.~Steinberg.
\newblock A geometric approach to the representations of the full linear group
  over a {G}alois field.
\newblock {\em Trans. Amer. Math. Soc.}, 71:274--282, 1951.

\bibitem{stong}
R.~Stong.
\newblock Some asymptotic results on finite vector spaces.
\newblock {\em Adv. in Appl. Math.}, 9(2):167--199, 1988.

\bibitem{vakil}
R.~Vakil.
\newblock Genus 0 and 1 {H}urwitz numbers: recursions, formulas, and
  graph-theoretic interpretations.
\newblock {\em Trans. Amer. Math. Soc.}, 353(10):4025--4038, 2001.

\bibitem{walkup}
D.~W. Walkup.
\newblock How many ways can a permutation be factored into two {$n$}-cycles?
\newblock {\em Discrete Math.}, 28(3):315--319, 1979.

\end{thebibliography}

\end{document}